\newtheorem{theorem}{Theorem}
\newtheorem{definition}[theorem]{Definition}
\newtheorem{lemma}[theorem]{Lemma}
\newtheorem{proposition}[theorem]{Proposition}
\newtheorem{remark}[theorem]{Remark}
\numberwithin{equation}{section}
\numberwithin{theorem}{section}
\renewcommand{\epsilon}{\varepsilon}
\renewcommand{\rho}{\varrho}
\def\Xint#1{\mathchoice
{\XXint\displaystyle\textstyle{#1}}%
{\XXint\textstyle\scriptstyle{#1}}%
{\XXint\scriptstyle\scriptscriptstyle{#1}}%
{\XXint\scriptscriptstyle\scriptscriptstyle{#1}}%
\!\int}
\def\XXint#1#2#3{{\setbox0=\hbox{$#1{#2#3}{\int}$ }
\vcenter{\hbox{$#2#3$ }}\kern-.6\wd0}}
\def\dashint{\Xint-}
\def\YYint#1#2#3{{\setbox0=\hbox{$#1{#2#3}{\iint}$}
    \vcenter{\hbox{$#2#3$}}\kern-.51\wd0}}
\numberwithin{equation}{section}
\begin{document}

\title[Partial regularity for degenerate parabolic systems]%
{Partial regularity for degenerate parabolic systems with non-standard growth and discontinuous coefficients
}
\author[Qifan Li]%
{Qifan Li*}

\newcommand{\acr}{\newline\indent}

\address{\llap{*\,}Department of Mathematics\acr
                   School of Sciences\acr
                   Wuhan University of Technology\acr
                   430070, 122 Luoshi Road,
                   Wuhan, Hubei\acr
                   P. R. China}
\email{qifan\_li@yahoo.com, qifan\_li@whut.edu.cn}

\subjclass[2010]{35K40, 35K65, 35K67, 35K92, 35B65.} 
\keywords{Partial regularity theory, Quasilinear parabolic system, Non-standard growth condition.}

\begin{abstract}
This article studies the partial H\"older continuity of weak solutions to certain degenerate parabolic
systems whose model is the differentiable parabolic $p(x,t)$-Laplacian system,
\begin{equation*}\partial_t u-\operatorname{div}[\mu(z)(1+|Du|^2)^{\frac{p(z)-2}{2}}Du]=0,\qquad p(z)\geq2.\end{equation*}
Here, the exponential function $p(z)$ satisfies a logarithmic continuity
condition.
We show that if $\mu(z)$ satisfies a certain VMO-type condition, then $u$ is locally H\"older
continuous
except for a measure zero set.
\end{abstract}
\maketitle
\section{Introduction}
The aim of this paper is to establish a partial regularity result for weak solutions to parabolic systems of the type
\begin{equation*}\partial_t u-\operatorname{div}\left[\mu(z)A(z,Du)\right]=0,\end{equation*}
where the coefficient $\mu(z)$ is discontinuous and the
vector field $A(z,Du)$ exhibits non-standard $p(x,t)$-growth conditions. The partial regularity theory
for weak solutions of parabolic systems was first studied by Campanato \cite{C}.
Unlike the case of single equation, in general not every weak solution of parabolic system is everywhere H\"older continuous
and we can hope for is a result ensuring H\"older continuity outside a Lebesgue measure zero set.
For H\"older continuous coefficients, the partial regularity theory for parabolic systems with standard $p$-growth
has been established by Duzzar, Mingione, Steffen \cite{DMS} in the superquadratic case and Scheven \cite{S}
in the subquadratic case. In \cite{DMS,S} the authors proved that
the gradient of the weak solution is partial H\"older continuous.
Moreover, in \cite{BFM,FG} it has been proved that weak solutions to parabolic systems are
H\"older continuous except a Lebesgue measure zero set, provided that the coefficients are merely continuous. Recently,
Mons \cite{M} proved the partial regularity result for the VMO coefficients, which allows
the coefficients to be discontinuous.

In recent years, there has been tremendous interest in developing regularity theory
for the parabolic systems with non-standard $p(x,t)$-growth. However, limited work has been done in
the partial regularity problem
for this kind of parabolic systems. The first result was established  by
Acerbi, Mingione and Seregin \cite{AcMS}. In \cite{AcMS} the authors obtained a Hausdorff dimension estimate of the singular set
for the parabolic systems related to a class
of non-Newtonian fluids. Subsequently,
Duzaar and Habermann \cite{DH} studied the partial regularity problem with H\"older continuous coefficients by using the
$A$-caloric approximation method. Motivated by this work, we are interested in extending the main result in \cite{M}
to the variable exponent case. Our main result states that any
weak solution to degenerate parabolic systems with non-standard growth and VMO coefficients
is partially H\"older continuous for any H\"older exponents.

Our approach is in the spirit of \cite{BDHS,BFM,DMS,DH,M} which uses the $A$-caloric approximation method.
However, the treatment of non-standard growth condition is considerably more delicate.
The higher integrability estimate is a standard tool in the proof and the higher integrability exponents play an important
role in determining the parameters. From the higher integrability result, we shall derive an $L^p\log^\gamma L$ estimate which is
standard ingredient in the partial regularity proof.
Contrary to \cite{DH}, gradients of the affine functions used in our proof
may not be bounded and the method of intrinsic scaling developed by DiBenedetto, Kinnunen and Lewis \cite{KL1} is necessary.
In the context of non-standard growth condition,
we have to work with a non-standard version
of the intrinsic geometry and establish the Caccioppoli inequality and decay estimates on the scaled parabolic cylinders.

An outline of this paper is as follows. In Sect. 2, we provide
some preliminary material and state the main
result. We also give the characterization of the singular sets in this section.
In Sect. 3, we derive an estimate for gradients of solutions in $L^p\log^\gamma L$ space by using the
higher integrability estimates. Sect. 4 is devoted to the study of Poincar\'e type inequality for the weak solution.
Furthermore, we obtain an alternative characterization of the regular points from the
Poincar\'e inequality.
In Sect. 5 we establish the Caccioppoli-type estimate which is a reverse Poincar\'e inequality.
In Sect. 6 we use the $A$-caloric approximation method to derive a decay estimate. Finally the proof of the
partial regularity result is presented in Sect. 7 by using an iteration method.
\section{Preliminary material}
In the present section, we set up notations and give the statement of the main result.
Throughout the paper, we assume that $\Omega$ is an open bounded domain in $\mathbb{R}^n$ with $n\geq 2$.
We write $\{e_i\}_{i=1}^n$ for the standard basis of $\mathbb{R}^n$.
For $T>0$,
let $\Omega_T=\Omega\times(-T,0)$. Given a point $z_0=(x_0,t_0)\in \mathbb{R}^{n+1}$
and $r>0$, we set $B_r(x_0)=\{x\in\mathbb{R}^n:\ |x-x_0|<r\}$, $\Lambda_r(t_0)=(t_0-r^2,t_0)$ and $Q_r(z_0)=B_r(x_0)\times \Lambda_r(t_0)$.
For $\lambda>0$, we define the intrinsic parabolic cylinder $Q_r^{(\lambda)}(z_0)$ by
$Q^{(\lambda)}_r(z_0):=B_r(x_0)\times\Lambda_r^{(\lambda)}(t_0)$ where $ \Lambda_r^{(\lambda)}(t_0):=(t_0-\lambda^{2-p_0}r^2,t_0)$
and $p_0=p(z_0)$.
If the reference point $z_0$ is the origin, then
we omit in our notation the point $z_0$ and write
$B_r$, $\Lambda_r^{(\lambda)}$ and $Q_{r}^{(\lambda)}$ for $B_r(0)$, $\Lambda_r^{(\lambda)}(0)$
and $Q_{r}^{(\lambda)}(0)$. Specifically, if $\lambda=1$, then we abbreviate $\Lambda_r=\Lambda_r^{(1)}$ and $Q_{r}=Q_{r}^{(1)}$.
Given a function $f\in L^1(Q,\mathbb{R}^N)$, with $Q\subset\mathbb{R}^{n+1}$ and $N\in \mathbb{N}$ we define
\begin{equation*}(f)_Q=\ \dashint_Q|f|\,\mathrm{d}z:=\frac{1}{|Q|}\ \int_Q|f|\,\mathrm{d}z.\end{equation*}
For $u\in L^2(Q_r^{(\lambda)},\mathbb{R}^N)$ with $Q_r^{(\lambda)}\subset\Omega_T$,
we denote by $l_{z_0,\ r}^{(\lambda)}:\mathbb{R}^n\mapsto\mathbb{R}^N$ the unique affine map minimizing the functional
\begin{equation*}F[l]=\ \dashint_{Q_r^{(\lambda)}}|u-l|^2\,\mathrm{d}z\end{equation*}
among all affine maps $l=l(x_0)+Dl\cdot (x-x_0)$ which are independent of $t$.
In this work we are concerned with the quasilinear parabolic systems of the divergence form
\begin{equation}\label{parabolic}\partial_t u-\operatorname{div}\left[\mu(z)A(z,Du)\right]=0,\end{equation}
where $\mu:\mathbb{R}^{n+1}\mapsto\mathbb{R}$ is a coefficient and $u:\mathbb{R}^{n+1}\mapsto\mathbb{R}^N$ is an integrable map.
We assume that $(z,w)\mapsto A(z,w)$ and $(z,w)\mapsto \partial_wA(z,w)$ are continuous in $\Omega_T\times \mathbb{R}^{N\times n}$
and satisfy the following non-standard $p(z)$-growth and ellipticity conditions:
 \begin{equation}\label{A}
	\begin{cases}
	 \big\langle\partial_wA(z,w)\tilde w,\tilde w \big \rangle\geq \sqrt{\nu}(1+|w|^2)^{\frac{p(z)-2}{2}}|\tilde w|^2,\\
	|A(z,w)|+(1+|w|^2)^{\frac{1}{2}}|\partial_wA(z,w)|\leq \sqrt{L}(1+|w|^2)^{\frac{p(z)-1}{2}},
	\end{cases}
\end{equation}
for all $z\in\Omega_T$, $w$, $\tilde w\in\mathbb{R}^{N\times n}$.
Here, $\nu$ and $L$ are fixed structural parameters. In this paper, we only consider the degenerate case where $p(z)\geq2$.
For the exponent function $p:\Omega_T\mapsto[2,+\infty)$, we assume that it is continuous with a modulus of continuity
$\omega_p:\Omega_T\mapsto[0,1]$.
More precisely, we assume that for any $z_1=(x_1,t_1),z_2=(x_2,t_2)\in\Omega_T$,
 \begin{equation}\label{pz1pz2}
|p(z_1)-p(z_2)|\leq \omega_p(d_{\mathcal{P}}(z_1,z_2)),\end{equation}
where $d_{\mathcal{P}}(z_1,z_2)=\max\left\{|x_1-x_2|,\sqrt{|t_1-t_2|}\right\}$. Moreover, we shall assume $ 2\leq p(z)\leq \gamma_2$
for a fixed constant $\gamma_2>0$. The modulus of continuity $\omega_p$ is assumed to be a concave, non-decreasing function satisfying the
following logarithmic continuity condition:
  \begin{equation}\label{omegap}\lim_{\rho\downarrow0}\omega_p(\rho)\log\left(\frac{1}{\rho}\right)=0.\end{equation}
  Concerning the coefficients $\mu(z)$, we impose a
certain VMO-type condition. More precisely,
 we assume that $\sqrt{\nu}\leq \mu(z)\leq \sqrt{L}$ for all $z\in \Omega_T$ and satisfy the following vanishing mean oscillation condition:
 \begin{equation}\label{VMO}\lim_{r\to 0}v(r)=0,\end{equation}
 where
 \begin{equation*}v(r)=\sup\left\{\ \dashint_{Q_{\rho,\sigma}(z_0)\cap\Omega_T}
 |\,\mu(z)-(\mu)_{Q_{\rho,\sigma}(z_0)\cap\Omega_T}|\,\mathrm {d}z:\quad
 \max\{\rho,\sqrt{\sigma}\}\leq r,\quad z_0\in\Omega_T\right\}\end{equation*}
 and $Q_{\rho,\sigma}(z_0)=B_\rho(x_0)\times(t_0-\sigma,t_0)$.
Furthermore, we shall assume that the partial map $z\mapsto A(z,w)$ satisfies the following continuity condition:
 \begin{equation}\begin{split}\label{Az1z2}|A(z_1,w)-A(z_2,w)|\leq &L\omega_p(d_{\mathcal{P}}(z_1,z_2))
 \left[(1+|w|)^{p(z_1)-1}+(1+|w|)^{p(z_2)-1}\right]
 \\&\times\left[1+\log(1+|w|)\right]\end{split}\end{equation}
 for any $z_1$, $z_2\in \Omega_T$ and $w\in \mathbb{R}^{N\times n}$. Finally, we shall assume that the partial
map $z\mapsto \partial_wA(z,w)$ is continuous in the sense that there exists a bounded, concave
and non-decreasing modulus of continuity $\hat\omega_a(\cdot)$ such that $\hat\omega_a(0)=0$ and
  \begin{equation}\begin{split}\label{DAz1z2}|\partial_wA(z,w_1)-\partial_w A(z,w_2)|\leq L\hat\omega_a\left(\frac{|w_1-w_2|}
  {1+|w_1|+|w_2|}\right) (1+|w_1|+|w_2|)^{p(z)-2}\end{split}\end{equation}
  holds for any $w_1,w_2\in \mathbb{R}^{N\times n}$ and $z\in \Omega_T$.
 Now we give the definition of a weak solution to the parabolic system \eqref{parabolic}.
 \begin{definition}A function $u\in L^1(\Omega_T,\mathbb{R}^N)$ is called weak solution to the parabolic system \eqref{parabolic} if and only
 if $u\in C^0([-T,0];L^2(\Omega,\mathbb{R}^N))$, $|u|^{p(\cdot)}$, $|Du|^{p(\cdot)}\in L^1(\Omega_T)$ and
\begin{equation}\label{weaksolution}\int_{\Omega_T}\left[u\cdot\partial_t \varphi-\mu(z)\big\langle A(z,Du), D\varphi\big\rangle\right]
\mathrm {d}z=0\end{equation}
holds,
whenever $\varphi\in C_0^\infty(\Omega_T,\mathbb{R}^N)$.
 \end{definition}
We are now in a position to state our main theorem which also
present a characterizations of singular sets.
 \begin{theorem}\label{main}Let $u$ be a weak solution of the parabolic system \eqref{parabolic}, where the assumptions \eqref{A}-\eqref{DAz1z2}
 are in force. Then there exists an open subset $\Omega_0\subset\Omega_T$
 with $|\Omega_T\setminus \Omega_0|=0$ such that $u\in C_{\mathrm{loc}}^{0;\alpha,\alpha/2}(\Omega_0,\mathbb{R}^N)$,
where $\alpha\in(0,1)$. More precisely, we have that the
 singular set fulfils $\Omega_T\setminus \Omega_0\subset \Sigma_1\cup\Sigma_2$, where $\Sigma_1$ and $\Sigma_2$ are defined in the following way:
  \begin{equation*}\begin{split}\Sigma_1:&=\left\{z_0\in \Omega_T:\liminf\limits_{\rho\downarrow0}\ \dashint_{Q_\rho(z_0)}
  |Du-(Du)_{Q_\rho(z_0)}|\,\mathrm {d}z>0
  \right\},
  \\ \Sigma_2:&=\left\{z_0\in \Omega_T:\limsup\limits_{\rho\downarrow0}\left(|Du|^{p(\cdot)}\right)_{Q_\rho(z_0)}=+\infty
  \right\}.\end{split}\end{equation*}
 \end{theorem}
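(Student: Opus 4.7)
The plan is to combine intrinsic scaling with the $A$-caloric approximation scheme, following the roadmap indicated in Sections~3--7 of the introduction. The goal is to show that every $z_0 \in \Omega_T \setminus (\Sigma_1 \cup \Sigma_2)$ admits a parabolic neighborhood on which $u$ is H\"older continuous for every exponent $\alpha \in (0,1)$. Once this is done, the complement of $\Sigma_1 \cup \Sigma_2$ is automatically open as an intersection of open sublevel sets, and Lebesgue's differentiation theorem applied to $Du$ and to $|Du|^{p(\cdot)}$ gives $|\Sigma_1 \cup \Sigma_2| = 0$, which yields the desired $\Omega_0$.

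Fix $z_0 \notin \Sigma_1 \cup \Sigma_2$ and set $p_0 = p(z_0)$. Since the mean oscillation of $Du$ is small on a sequence $\rho_j \downarrow 0$ and $(|Du|^{p(\cdot)})_{Q_{\rho_j}(z_0)}$ stays bounded, I can choose a radius $r$, a scale $\lambda \geq 1$ comparable to $1 + |Dl_{z_0,r}^{(\lambda)}|$, and the minimizing affine map $l = l_{z_0,r}^{(\lambda)}$, so that the scaled excess
\[
\Phi(z_0, r, \lambda) := \dashint_{Q_r^{(\lambda)}(z_0)} \left( \left| \frac{u-l}{\lambda r} \right|^{2} + \left| \frac{u-l}{\lambda r} \right|^{p_0} \right) \dd z
\]
is as small as desired. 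This realizes the intrinsic DiBenedetto--Kinnunen--Lewis geometry adapted to the $p(z)$-setting.

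The engine of the proof is an excess-decay estimate obtained by $\mathcal{A}$-caloric approximation with the frozen constant-coefficient operator $\mathcal{A} := (\mu)_{Q_r^{(\lambda)}(z_0)}\, \partial_w A(z_0, Dl)$. I would assemble the argument as follows. First, the $L^{p}\log^{\gamma} L$ estimate of Section~3, together with the logarithmic modulus \eqref{omegap} and the continuity bound \eqref{Az1z2}, controls the frozen--vs.--true difference $\mu(z) A(z, Du) - \mathcal{A}\, Du$ by terms of the form $\omega_p(r)(1+|Du|^{p_0})(1+\log(1+|Du|))$, which stay small after averaging. Second, \eqref{VMO} makes the replacement of $\mu(z)$ by its average on $Q_r^{(\lambda)}(z_0)$ admissible with error $v(r)$. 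Third, \eqref{DAz1z2} controls the linearization error via $\hat\omega_a$. Combining these with the Poincar\'e inequality of Section~4 and the Caccioppoli inequality of Section~5 shows that $(u-l)/(\lambda r)$ is approximately $\mathcal{A}$-caloric; the approximation lemma of Section~6 then produces a true $\mathcal{A}$-caloric map $h$ with $\dashint |u-h|^{2}/(\lambda r)^{2}$ small. Classical interior $C^{\infty}$ bounds on $h$, combined once more with Caccioppoli, give
\[
\Phi(z_0, \theta r, \widetilde\lambda) \leq C \theta^{2}\, \Phi(z_0, r, \lambda) + C \bigl( v(r) + \omega_p(r) + \hat\omega_a(\sqrt{\Phi}) \bigr)^{\kappa}
\]
for some $\kappa > 0$ and some $\widetilde\lambda$ comparable to $\lambda$. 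Choosing $\theta$ small, then $r$ small through \eqref{omegap} and \eqref{VMO}, and iterating yields $\Phi(z_0, \theta^{j} r, \lambda_j) \lesssim \theta^{2\alpha j}$ for every $\alpha \in (0,1)$; a Campanato-type characterization then gives $u \in C^{0;\alpha,\alpha/2}$ near $z_0$.

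The main obstacle is the bookkeeping of intrinsic scaling across iterations. The scale $\lambda$ depends on $1+|Dl|$, and as one zooms in both the affine approximation $l$ and the reference exponent $p(z)$ drift. Keeping $\widetilde\lambda$ comparable to $\lambda$ at every step so that the intrinsic cylinders stay nested, and simultaneously ensuring that the $L^{p}\log^{\gamma} L$ gain from Section~3 compensates for the logarithmic factor in \eqref{Az1z2}, is the delicate point that distinguishes the $p(z)$-setting from the constant-exponent case of Mons~\cite{M} and the H\"older-coefficient case of Duzaar--Habermann~\cite{DH}.
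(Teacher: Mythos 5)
Your proposal follows essentially the same route as the paper's Sections 4--7: intrinsic scaling adapted to $p(z)$, $A$-caloric approximation with the frozen operator $(\mu)_{Q^{(\lambda)}_\rho(z_0)}\partial_w A(z_0,Dl)$, the $L^p\log^\gamma L$ estimate to absorb the logarithmic growth in \eqref{Az1z2}, Poincar\'e and Caccioppoli inequalities to verify approximate caloricity, an excess-decay iteration tracking a comparable scaling factor $\lambda_j$, and the Campanato characterization of $C^{0;\alpha,\alpha/2}$. One small correction: the openness of $\Omega_0$ does not come ``automatically as an intersection of open sublevel sets'' (the $\liminf$/$\limsup$ conditions defining $\Sigma_1$, $\Sigma_2$ do not directly produce closed sets); in the paper it is the starting estimate (Proposition \ref{start}) that supplies the smallness conditions uniformly for every $z_0$ in a full neighborhood $Q_{\rho/8}(\mathfrak z_0)$, and running the iteration at each such $z_0$ is what makes a neighborhood of $\mathfrak z_0$ regular and hence $\Omega_0$ open.
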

 We conclude this section by pointing out that the Lebesgue measure $\mathscr{L}^{n+1}(\Sigma_1\cup\Sigma_2)=0$ by the Lebesgue differentiation
 theorem and this also implies that $|\Omega_T\setminus \Omega_0|=0$.
 \section{Higher integrability}
 A key ingredient in the partial regularity proof is the higher integrability for the gradient of the solutions to the
 parabolic system \eqref{parabolic}. In this section, we collect some higher integrability results and obtain
 a logarithmic estimate for the gradients. Since $|Du|^{p(\cdot)}\in L^1(\Omega_T)$,
we assume in this paper that
 \begin{equation}\label{E}\int_{\Omega_T}(|Du|+1)^{p(z)}\,\mathrm {d}z\leq E,\end{equation}
 for some constant $E>1$.
 Next, we recall the following higher integrability results for the degenerate case $p(z)\geq 2$,
 which were obtained from \cite[Theorem 2.2]{BD}.
 \begin{proposition}\label{higher integrability0}(\cite[Theorem 2.2]{BD})
 There exists $\epsilon_0=\epsilon_0(n,N,\nu,L,\gamma_2)\in(0,1)$ such that
 the following holds: Whenever $u$ is a weak solution of the parabolic system \eqref{parabolic},
 where the assumptions \eqref{A}-\eqref{DAz1z2}
 are in force, then there holds
\begin{equation}\label{higher integrability initial}|Du|^{p(\cdot)(1+\epsilon_0)}\in L_{\mathrm{loc}}^1(\Omega_T).\end{equation}
  Moreover, if \eqref{E} holds, then there exists a radius $\rho_*=\rho_*(n,N,\nu,L,\gamma_2,E)>0$ such that
  for any parabolic cylinder $Q_{2\rho}(z_0)\subset\Omega_T$ with $\rho\leq\rho_*$ and $\epsilon\in(0,\epsilon_0]$
  there holds
   \begin{equation}\label{higher integrability} \dashint_{Q_\rho(z_0)}|Du|^{p(z)(1+\epsilon)}\,\mathrm {d}z\leq c
   \left[\Big(\ \,\dashint_{Q_{2\rho}(z_0)}|Du|^{p(z)}\,\mathrm {d}z\Big)^{1+\frac{1}{2}\epsilon p_0}+1\right],\end{equation}
   for a constant $c$ depending only upon $n$, $N$, $\nu$, $L$, $\gamma_2$ and $E$.
 \end{proposition}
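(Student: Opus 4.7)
\emph{Proof plan.} The plan is to re-derive the higher integrability by the standard Gehring / Giaquinta--Modica self-improving reverse H\"older strategy, adapted to the $p(z)$-parabolic setting through the intrinsic cylinders $Q_r^{(\lambda)}(z_0)$. The natural homogeneity of the equation near $z_0$ is essentially $p_0$-homogeneity, so one works on cylinders whose time scaling is coupled to the solution through
\begin{equation*}
\lambda^{p_0}\sim\ \dashint_{Q_r^{(\lambda)}(z_0)}(1+|Du|)^{p(z)}\diff z;
\end{equation*}
on such cylinders the system behaves like the standard degenerate $p_0$-Laplacian system, and the classical constant-exponent machinery of DiBenedetto, Kinnunen--Lewis becomes applicable.

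The steps are then the following. First, I would test the weak formulation \eqref{weaksolution} with $\eta^{2}(u-(u)_{Q_r^{(\lambda)}})$ for a parabolic cut-off $\eta$ to obtain a Caccioppoli-type inequality on $Q_r^{(\lambda)}(z_0)$. Coupling this with a Sobolev--Poincar\'e inequality on the intrinsic cylinder yields a reverse H\"older inequality of the type
\begin{equation*}
\dashint_{Q_{r/2}^{(\lambda)}(z_0)}(1+|Du|)^{p(z)}\diff z\leq c\left(\ \dashint_{Q_r^{(\lambda)}(z_0)}(1+|Du|)^{p(z)\theta}\diff z\right)^{1/\theta}+c
\end{equation*}
for some $\theta\in(0,1)$, valid whenever the coupling condition above is realised. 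Next, a stopping-time argument produces, at each Lebesgue point of $|Du|^{p(\cdot)}$, the largest radius at which the intrinsic average first hits a prescribed level; a Vitali covering of the corresponding superlevel set by such good cylinders, combined with the classical Gehring self-improvement lemma, delivers the exponent $\epsilon_0=\epsilon_0(n,N,\nu,L,\gamma_2)$ and the qualitative claim \eqref{higher integrability initial}. The quantitative bound \eqref{higher integrability} then follows by tracking constants through the iteration and re-expressing the output back on the standard parabolic cylinder $Q_{2\rho}(z_0)$.

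The main obstacle is handling the variable exponent consistently inside each cylinder, since quantities of the form $(1+|Du|)^{p(z)-p_0}$ arise in almost every estimate and must be controlled in mean. This is exactly where the logarithmic continuity \eqref{omegap}, the a priori bound \eqref{E} and the radius restriction $\rho\leq\rho_*$ enter: together they give $\rho^{-\omega_p(\rho)\gamma_2}\leq C(E)$ on sufficiently small cylinders, which makes the exponent swap admissible with constants depending only on $n,N,\nu,L,\gamma_2,E$. The asymmetric power $1+\tfrac{1}{2}\epsilon p_0$ on the right-hand side of \eqref{higher integrability} is precisely the price of converting an intrinsic estimate (whose natural output is $\lambda^{p_0(1+\epsilon/2)}$) into one on the standard parabolic cylinder $Q_{2\rho}(z_0)$, and it reflects the super-linear dependence on the energy that is unavoidable in the degenerate regime $p(z)\geq 2$.
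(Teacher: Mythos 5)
The paper does not prove this statement: Proposition \ref{higher integrability0} is imported verbatim from B\"ogelein--Duzaar \cite[Theorem 2.2]{BD} and cited, not re-derived, so there is no in-paper proof to compare you against. That said, your sketch is a faithful summary of the strategy actually employed in \cite{BD}: intrinsic cylinders $Q_r^{(\lambda)}$ with $\lambda$ coupled to the local $p(\cdot)$-energy \`a la DiBenedetto/Kinnunen--Lewis, a Caccioppoli estimate together with a parabolic Sobolev--Poincar\'e inequality on those cylinders giving a reverse H\"older inequality with exponent $\theta<1$, a stopping-time/Vitali covering of the superlevel sets, a Gehring-type self-improvement to produce $\epsilon_0(n,N,\nu,L,\gamma_2)$, and the log-continuity \eqref{omegap} plus \eqref{E} to control factors of the form $\rho^{-\omega_p(\rho)}$ when transiting between $p(z)$ and $p_0$. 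Two minor caveats on precision. First, in \cite{BD} the time-slice control that feeds the Sobolev--Poincar\'e step requires a separate gluing lemma for weighted spatial averages of $u$ (one cannot just invoke ``Sobolev--Poincar\'e on the intrinsic cylinder'' for the solution itself, since no a priori time regularity is available); your sketch elides this but it is the technically heaviest step in the degenerate setting. Second, your heuristic that the output homogeneity is $\lambda^{p_0(1+\epsilon/2)}$ is directionally right but not literally how the exponent $1+\tfrac{1}{2}\epsilon p_0$ arises: it comes from the combination of the $\lambda^{2-p_0}$ time-rescaling in the covering argument with the choice $\lambda^{p_0}\sim$ intrinsic energy, and the factor $p_0$ is crucial (your displayed $1+\epsilon/2$ would be the wrong power). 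As an outline these are acceptable imprecisions, but if you were actually writing out the proof they are exactly where the work lies.
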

Our first goal is to improve the higher integrability
\eqref{higher integrability initial} to a fixed scaled parabolic cylinder and derive some properties. Let $\epsilon_0\in(0,1)$ be the exponent as in
Proposition \ref{higher integrability0}. Fix $z_0\in\Omega_T$, $\rho>0$ and $\lambda\geq1$, we now consider the oscillation of $p(z)$
on the parabolic cylinder $Q_{\rho}^{(\lambda)}(z_0)$ and define
 \begin{equation}\label{p1p2}p_1=\inf _{Q_{\rho}^{(\lambda)}(z_0)}p(z)\qquad\text{and}\qquad p_2=\sup _{Q_{\rho}^{(\lambda)}(z_0)}p(z).\end{equation}
Since $p_0=p(z_0)\geq2$ and $\lambda\geq1$, we infer from \eqref{pz1pz2} and \eqref{p1p2} that
  \begin{equation}\label{p2-p1}p_2-p_1\leq \max_{z_1,z_2\in
  Q_{\rho}^{(\lambda)}(z_0)}\omega_p(d_{\mathcal{P}}(z_1,z_2)) \leq \omega_p(\rho).\end{equation}
This also implies that $p_2-p_1\leq1$. Moreover, we conclude from \eqref{omegap} and \eqref{p2-p1} that
    \begin{equation}\label{exp p2-p1}\rho^{-(p_2-p_1)}\leq \rho^{-\omega_p(\rho)}=e^{\omega_p(\rho)\log\left(\frac{1}{\rho}\right)}
    \leq e.\end{equation}
 At this point, we choose $\tilde\rho_0=\tilde\rho_0(\epsilon_0)>0$ small enough to have
 \begin{equation}\label{rho0}
 \tilde \rho_0\leq \rho_*\qquad\text{and}\qquad\omega_p(16\tilde\rho_0)\leq \epsilon_0,\end{equation}
 where $\rho_*$ is the radius in Proposition \ref{higher integrability}.
 From \eqref{rho0}, we deduce from \eqref{higher integrability initial} that for any $\rho<\tilde \rho_0$ there holds
\begin{equation}Du\in L^{p_2}(Q_\rho^{(\lambda)}(z_0),\mathbb{R}^{N\times n}).\end{equation}
Our task now is to establish a relationship between the exponential function and the scaling factor.
To this end, we introduce the quantity
 \begin{equation}\label{phi}\Phi^{(\lambda)}(z_0,\rho,l)=\ \dashint_{Q_{\rho}^{(\lambda)}(z_0)}
 \left(\frac{|Du-Dl|}{1+|Dl|}\right)^2+\left(\frac{|Du-Dl|}{1+|Dl|}\right)^{p_0}\,\mathrm {d}z,\end{equation}
 where $p_0=p(z_0)$ and $l$ is an affine function.
Specifically, if $\lambda=1$, then we simply write $\Phi(z_0,\rho,l)$
for $\Phi^{(1)}(z_0,\rho,l)$.
The next lemma provides some useful estimates for the treatment of the non-standard growth.
  \begin{lemma}\label{lambda} Let $z_0\in\Omega_T$ and $\tilde\rho_0$ satisfies \eqref{rho0}. Let $p_1$ and $p_2$ be the exponents defined in
  \eqref{p1p2}.
  Suppose that $\lambda\geq1$ and $\lambda\leq 1+|Dl|\leq M\lambda$ for some $M\geq1$.
  If $\Phi^{(\lambda)}(z_0,\rho,l)\leq \frac{1}{16}$ and $\rho\leq\tilde\rho_0$, then we have the following properties:
  \begin{itemize}
  \item [(1)] $\frac{1}{2}(1+|Dl|)\leq1+|(Du)_{Q_{\rho}^{(\lambda)}(z_0)}|\leq3(1+|Dl|)$,
\end{itemize}
 \begin{itemize}
  \item [(2)] $\dashint_{Q_\rho^{(\lambda)}(z_0)}|Du|^{p_0}\,\mathrm {d}z\leq c(1+|Dl|)^{p_0}$,
  \end{itemize}
  \begin{itemize}
  \item [(3)] $\lambda^{p_2-p_1}\leq c$ and $(1+|Dl|)^{p_2-p_1}\leq c$,
\end{itemize}
 \begin{itemize}
  \item [(4)] $1+|Dl|\leq c\rho^{-\gamma_2\frac{(n+2)}{2}}$,
\end{itemize}
  where the constant $c$ depends only upon $n$, $E$, $\gamma_2$ and $M$.
   \end{lemma}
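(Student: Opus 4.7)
The plan is to prove the four items in the order (1), (2), (4), (3), relying only on elementary H\"older/triangle inequalities, the smallness hypothesis on $\Phi^{(\lambda)}(z_0,\rho,l)$, the global energy bound \eqref{E}, and the log-continuity inequality \eqref{exp p2-p1}; no higher integrability is required. The main obstacle will be item (4): one has to convert the smallness of $\Phi^{(\lambda)}$ into an absolute, $\rho$-dependent bound on $1+|Dl|$. Once (4) is in hand, item (3) follows by a direct application of the log-continuity condition.

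For item (1), I would note that $Dl$ is constant in $z$, so $(Du)_{Q_{\rho}^{(\lambda)}(z_0)}-Dl=\dashint_{Q_{\rho}^{(\lambda)}(z_0)}(Du-Dl)\,\mathrm{d}z$. Cauchy--Schwarz applied to the $L^2$ part of $\Phi^{(\lambda)}\leq 1/16$ gives $|(Du)_{Q_{\rho}^{(\lambda)}(z_0)}-Dl|\leq\tfrac{1}{4}(1+|Dl|)$, and the reverse triangle inequality then yields the sharper bound $\tfrac{3}{4}(1+|Dl|)\leq 1+|(Du)_{Q_{\rho}^{(\lambda)}(z_0)}|\leq\tfrac{5}{4}(1+|Dl|)$, which is (1). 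For item (2), the convexity bound $|Du|^{p_0}\leq 2^{p_0-1}(|Du-Dl|^{p_0}+|Dl|^{p_0})$, the $L^{p_0}$ part of $\Phi^{(\lambda)}\leq 1/16$, and $p_0\leq\gamma_2$ give the claim with $c=c(\gamma_2)$.

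Item (4) is the crux. Combining (1) with Jensen's inequality (which requires $p_1\geq 2$), and using $p_1\leq p(z)$ on $Q_{\rho}^{(\lambda)}(z_0)$ so that $|Du|^{p_1}\leq(1+|Du|)^{p(z)}$, one gets from \eqref{E} together with $|Q_{\rho}^{(\lambda)}(z_0)|=\omega_n\lambda^{2-p_0}\rho^{n+2}$ that
\[
1+|Dl|\;\leq\;2+2\Big(\dashint_{Q_{\rho}^{(\lambda)}(z_0)}|Du|^{p_1}\,\mathrm{d}z\Big)^{1/p_1}\leq c(n,E)\bigl(1+\lambda^{(p_0-2)/p_1}\rho^{-(n+2)/p_1}\bigr).
\]
Invoking $\lambda\leq 1+|Dl|$ and rearranging yields the implicit estimate $(1+|Dl|)^{(p_1-p_0+2)/p_1}\leq c\,\rho^{-(n+2)/p_1}$. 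Crucially, the condition $p_2-p_1\leq 1$ from \eqref{p2-p1} guarantees $p_1-p_0+2\geq 1$, so the exponent $p_1/(p_1-p_0+2)$ to which one then raises both sides lies in $[1,p_1]\subset[1,\gamma_2]$. This produces $1+|Dl|\leq c(n,E,\gamma_2)\rho^{-(n+2)}$, which, since $\gamma_2\geq 2$ and $\rho\leq\tilde\rho_0\leq 1$, is dominated by $c\,\rho^{-\gamma_2(n+2)/2}$.

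Finally, for item (3), applying (4) together with $p_2-p_1\leq\omega_p(\rho)\leq 1$ and \eqref{exp p2-p1},
\[
(1+|Dl|)^{p_2-p_1}\;\leq\;c^{p_2-p_1}\bigl(\rho^{-\omega_p(\rho)}\bigr)^{\gamma_2(n+2)/2}\;\leq\;c\,e^{\gamma_2(n+2)/2},
\]
and the corresponding bound on $\lambda^{p_2-p_1}$ follows from $\lambda\leq 1+|Dl|$. The upper bound $1+|Dl|\leq M\lambda$ is not strictly needed for these four estimates; it is used elsewhere in the paper to interchange $\lambda$ and $1+|Dl|$ up to the constant $M$.
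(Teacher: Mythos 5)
Your proof is correct and uses the same core ingredients as the paper — the global energy bound \eqref{E}, the measure $|Q_\rho^{(\lambda)}(z_0)|\sim\lambda^{2-p_0}\rho^{n+2}$, and the log-continuity estimate \eqref{exp p2-p1} — but the organisation is slightly different. For (1) and (2) you supply direct Cauchy--Schwarz/convexity computations where the paper simply cites [H, Lemma 3.3]; both are fine. The more interesting divergence is in (3)--(4): the paper proves (3) first by raising the chain $1+|Dl|\leq 2(1+|(Du)_{Q_\rho^{(\lambda)}}|)\leq c\,\rho^{-(n+2)/p_1}\lambda^{(p_0-2)/p_1}(1+E)$ to the $(p_2-p_1)$-th power, using \eqref{exp p2-p1} and then rearranging in $\lambda$ (and invoking $1+|Dl|\leq M\lambda$ to pass from $\lambda$ to $1+|Dl|$), and only afterwards proves (4) by the same rearrangement without the extra exponent. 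You reverse the order: you prove the polynomial bound (4) first — using $\lambda\leq 1+|Dl|$ and $p_1-p_0+2\geq 1$ to unravel the implicit inequality, exactly as the paper does for its (4) — and then obtain (3) as a one-line corollary, $(1+|Dl|)^{p_2-p_1}\leq c^{p_2-p_1}(\rho^{-(p_2-p_1)})^{\gamma_2(n+2)/2}\leq c\,e^{\gamma_2(n+2)/2}$. This ordering is arguably cleaner, and as you note it delivers both halves of (3) without invoking the upper intrinsic coupling $1+|Dl|\leq M\lambda$, so your constants are in fact $M$-free. Your intermediate exponent $\rho^{-(n+2)/p_1}$ differs harmlessly from the paper's $\rho^{-(n+2)/2}$; both are absorbed into $\rho^{-\gamma_2(n+2)/2}$ in the end.

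One very small cosmetic remark: the parenthetical ``(which requires $p_1\geq 2$)'' attached to your Jensen step is unnecessary — the inequality $\dashint|Du|\,\mathrm{d}z\leq\big(\dashint|Du|^{p_1}\,\mathrm{d}z\big)^{1/p_1}$ only needs $p_1\geq 1$. This does not affect the argument since $p_1\geq 2$ anyway.
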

   \begin{proof} We first observe that
   the first two claims follows directly from the proof of \cite[Lemma 3.3]{H} and the assumption $\Phi^{(\lambda)}(z_0,\rho,l)\leq\frac{1}{16}$.
   For the proof of the third claim, we apply \eqref{E} and \eqref{exp p2-p1} to obtain
   \begin{equation}\begin{split}\label{lemma3.2 1}\lambda^{p_2-p_1}&\leq (1+|Dl|)^{p_2-p_1}\leq 2\Big(1+|(Du)_{Q_{\rho}^{(\lambda)}(z_0)}|
\Big)^{p_2-p_1}
   \\&\leq c\left(1+\Big(\ \dashint_{Q_{\rho}^{(\lambda)}(z_0)}|Du|^{p(z)}\,\mathrm {d}z\Big)^{\frac{1}{p_1}}\right)^{p_2-p_1}
   \\&\leq c\frac{1}{\rho^{(n+2)(p_2-p_1)/p_1}}\lambda^{\frac{p_0-2}{p_1}(p_2-p_1)}(1+E)^{p_2-p_1}\leq c\lambda^{\frac{p_0-2}{p_1}(p_2-p_1)},
   \end{split}\end{equation}
   where the constant $c$ depending only upon $n$ and $E$. Since $p_2-p_1\leq 1$, we infer from \eqref{lemma3.2 1} that
     $\lambda^{p_2-p_1}\leq c(n,E)^{\frac{p_1}{2-(p_2-p_1)}}\leq c(n,E)^{\gamma_2}.$
Furthermore, $(1+|Dl|)^{p_2-p_1}\leq c(n,\gamma_2,E,M)$ follows by the assumption $1+|Dl|\leq M\lambda$.
Next, we consider the proof of the fourth claim.
In view of the first claim, we use the H\"older's inequality and \eqref{E} to obtain
 \begin{equation}\begin{split}\label{lemma3.2 2}
1+|Dl|\leq 2&\big(\,1+|(Du)_{Q_{\rho}^{(\lambda)}(z_0)}|\,\big)
\leq 2+\frac{2}{\lambda^{\frac{2-p_0}{p_1}}\rho^{\frac{n+2}{2}}}\Big(\int_{Q_{\rho}^{(\lambda)}(z_0)}|Du|^{p_1}\,\mathrm {d}z
\Big)^{\frac{1}{p_1}}\\
&\leq \frac{c(E)\lambda^{\frac{p_0-2}{p_1}}}{\rho^{\frac{n+2}{2}}}\leq \frac{c(E)}{\rho^{\frac{n+2}{2}}}(1+|Dl|)^{\frac{p_0-2}{p_1}},
\end{split}\end{equation}
since $p_0\geq 2$ and $1\leq\lambda\leq 1+|Dl|$. Since $p_2-p_1\leq 1$, we infer from \eqref{lemma3.2 2} that
\begin{equation*}1+|Dl|\leq c\rho^{-\frac{n+2}{2}\frac{p_1}{p_1-p_0+2}}\leq c\rho^{-\gamma_2\frac{(n+2)}{2}}\end{equation*}
and this proves the fourth claim. We have thus proved the lemma.
   \end{proof}
Our next destination is to refine the estimate \eqref{higher integrability} on the scaled parabolic cylinder $Q_{\rho}^{(\lambda)}(z_0)$.
To this end,
we establish the following version of the higher integrability estimate for non-uniformly parabolic cylinders.
 \begin{lemma}\label{fitst higher} Let $z_0\in\Omega_T$ and $Q_{2\rho}^{(\lambda)}(z_0)\subset\Omega_T$. Let $p_1$ and $p_2$ be the
exponents defined in
  \eqref{p1p2}.
 Assume that
 $\Phi^{(\lambda)}(z_0,\rho,l)\leq \frac{1}{16}$, $\lambda\geq1$ and $\lambda\leq 1+|Dl|\leq M\lambda$ for some $M\geq1$.
 Then, there exists a constant $\delta_0=\delta_0(n,E,\gamma_2,\nu,L,M)>0$ such that the following holds: Whenever $\delta\in(0,\delta_0]$,
 there exists a radius $\hat\rho_0=\hat\rho_0(n,E,\gamma_2,\nu,L,M,\delta)>0$ such that
 for any $\rho\leq\hat\rho_0$
 the following inequality holds
    \begin{equation}\label{higher integrability1} \dashint_{Q_{\rho/2}^{(\lambda)}(z_0)}|Du|^{p_1(1+\delta)}\,\mathrm {d}z\leq c
   \left[\Big(\ \,\dashint_{Q_{\rho}^{(\lambda)}(z_0)}|Du|^{p_1}\,\mathrm {d}z\Big)^{1+\beta(\delta)}+1\right],\end{equation}
    where the constant $c$ depends only upon  $n$, $E$, $\gamma_2$, $\nu$, $L$ and $M$. Here, the constant
  $\beta(\delta)$ is defined by
  \begin{equation}\label{betadelta}\beta(\delta)=\frac{\delta}{1-\frac{p_2-p_1}{\delta p_1}(1+\frac{\delta p_2}{2})}.\end{equation}
 \end{lemma}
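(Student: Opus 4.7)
The strategy is to combine Proposition~\ref{higher integrability0}, which gives higher integrability on a standard parabolic cylinder, with the uniform bounds of Lemma~\ref{lambda}, using the logarithmic-continuity estimate $\rho^{-(p_2-p_1)}\le e$ from \eqref{exp p2-p1} to absorb the variable-exponent oscillation.

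Because $\lambda\ge 1$ and $p_0\ge 2$, the intrinsic cylinder $Q_{\rho/2}^{(\lambda)}(z_0)$ is contained in the standard parabolic cylinder $Q_{\rho/2}(z_0)$ with $|Q_{\rho/2}^{(\lambda)}|/|Q_{\rho/2}|=\lambda^{2-p_0}$, so averages on the intrinsic cylinder are bounded by $\lambda^{p_0-2}$ times the corresponding averages on the standard one. Applying Proposition~\ref{higher integrability0} with some $\epsilon\in(0,\epsilon_0]$ (to be chosen later as a multiple of $\delta$) and using $p(z)\ge p_1$ together with $\delta\le\epsilon$ to bound $|Du|^{p_1(1+\delta)}\le c(1+|Du|^{p(z)(1+\epsilon)})$ pointwise on $Q_\rho^{(\lambda)}(z_0)$, we arrive at
\begin{equation*}
\dashint_{Q_{\rho/2}^{(\lambda)}(z_0)}|Du|^{p_1(1+\delta)}\diff z
\le c\,\lambda^{p_0-2}\Bigl[\Bigl(\dashint_{Q_\rho(z_0)}|Du|^{p(z)}\diff z\Bigr)^{1+\epsilon p_0/2}+1\Bigr].
\end{equation*}

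The delicate step is to convert the right-hand side into intrinsic form with $p_1$ in place of $p(z)$. Writing $|Du|^{p(z)}\le(1+|Du|)^{p_1}(1+|Du|)^{p_2-p_1}$ and using H\"older's inequality with exponents tied to $\delta$, the second factor is interpolated between $\dashint|Du|^{p_1}$ and $\dashint|Du|^{p_1(1+\delta)}$. The assumption $\Phi^{(\lambda)}(z_0,\rho,l)\le 1/16$ together with Lemma~\ref{lambda} allows the factors $\lambda^{p_2-p_1}$ and $(1+|Dl|)^{p_2-p_1}$ to be absorbed into the constant, while $\rho^{-(p_2-p_1)}$ is controlled by \eqref{exp p2-p1}, and the change of measure $\lambda^{p_0-2}$ is likewise bounded in terms of $\lambda^{p_2-p_1}$ after a further H\"older step. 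Collecting, one is led to a schematic inequality
\begin{equation*}
\dashint_{Q_{\rho/2}^{(\lambda)}(z_0)}|Du|^{p_1(1+\delta)}\diff z
\le c+c\,\Bigl(\dashint_{Q_\rho^{(\lambda)}(z_0)}|Du|^{p_1}\diff z\Bigr)^{1+\beta(\delta)}
\Bigl(\dashint_{Q_\rho^{(\lambda)}(z_0)}|Du|^{p_1(1+\delta)}\diff z\Bigr)^{b},
\end{equation*}
in which $b=1-\delta/\beta(\delta)$; the algebraic identity \eqref{betadelta} is exactly what is forced by matching the exponents in the H\"older step.

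To close the estimate I plan to run this on a chain of nested intrinsic cylinders $Q_r^{(\lambda)}(z_0)$ with $r\in[\rho/2,\rho]$ and apply Young's inequality together with a standard iteration lemma to reabsorb the $L^{p_1(1+\delta)}$-average into the left-hand side; this is permissible precisely when $b<1$, i.e.\ when the denominator of \eqref{betadelta} is positive. Since $p_2-p_1\le\omega_p(\rho)\to 0$ by \eqref{omegap}, the constraint is enforced by choosing $\delta_0$ small (depending only on the listed constants) and $\hat\rho_0$ small enough that $\omega_p(\hat\rho_0)$ is comparable to $\delta p_1$. The main obstacle is the bookkeeping in the H\"older interpolation that produces the precise exponent $\beta(\delta)$; once the positivity constraint is enforced, the reabsorption and iteration steps are routine.
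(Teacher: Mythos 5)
Your plan has a genuine gap. The key difficulty of this lemma is that the estimate must be \emph{intrinsic}: both the left-hand and right-hand averages in \eqref{higher integrability1} live on the cylinders $Q^{(\lambda)}_{\rho/2}(z_0)$ and $Q^{(\lambda)}_\rho(z_0)$. You propose to apply Proposition~\ref{higher integrability0} on the \emph{standard} cylinders $Q_{\rho/2}(z_0)\subset Q_\rho(z_0)$ and then convert via the measure ratio $|Q_{\rho/2}|/|Q^{(\lambda)}_{\rho/2}|=\lambda^{p_0-2}$. This fails for two reasons. First, the inclusion $Q^{(\lambda)}_\rho(z_0)\subset Q_\rho(z_0)$ goes the wrong way on the right-hand side: there is no inequality of the form $\dashint_{Q_\rho(z_0)}|Du|^{p_1}\,\dd z\le c\dashint_{Q^{(\lambda)}_\rho(z_0)}|Du|^{p_1}\,\dd z$, so the average over the larger standard cylinder simply cannot be traded for the intrinsic average that the lemma asserts. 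Second, the prefactor $\lambda^{p_0-2}$ that survives is \emph{unbounded}: Lemma~\ref{lambda}~(3) only controls $\lambda^{p_2-p_1}$, and $p_0-2$ is in general of order one while $p_2-p_1\le\omega_p(\rho)\to 0$, so your claim that ``the change of measure $\lambda^{p_0-2}$ is likewise bounded in terms of $\lambda^{p_2-p_1}$ after a further H\"older step'' does not hold. (There is also a smaller issue: Proposition~\ref{higher integrability0} on $Q_{\rho/2}(z_0)$ needs $Q_\rho(z_0)\subset\Omega_T$, whereas the hypothesis only provides $Q_{2\rho}^{(\lambda)}(z_0)\subset\Omega_T$, which is a smaller set.)

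The paper avoids all of this by making the estimate intrinsic from the start: it performs the parabolic rescaling \eqref{change of variable} to pass from $u$ on shifted intrinsic subcylinders $Q_r^{(\lambda)}(\hat z)\subset Q_\rho^{(\lambda)}(z_0)$ to a new weak solution $\tilde u$ of a rescaled system \eqref{new parabolic} on \emph{standard} cylinders $Q_{2r}(0)$, verifies that the rescaled structure conditions hold with constants controlled via Lemma~\ref{lambda}~(2)--(3), applies~\cite[Theorem~2.2]{BD} there, rescales back (which is where the powers of $\lambda$ cancel rather than pile up), interpolates to trade $p_2$ for $p_1(1+\delta)$, and finally runs a covering and an iteration argument. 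In particular the rescaling, not a measure comparison, is what removes the dangerous $\lambda^{p_0-2}$. Your interpolation step and the origin of $\beta(\delta)$ are in the right spirit, but without the rescaling (or some equivalent device that converts the intrinsic geometry into a standard one), the reduction to Proposition~\ref{higher integrability0} as you state it does not go through.
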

    \begin{proof} To start with, we first choose $\rho<\tilde\rho_0$ where the radius $\tilde\rho_0$ satisfies \eqref{rho0}.
This choice of $\rho$ allows us to use
    Lemma \ref{lambda}.
 Let $\frac{1}{2}\rho<s<t<\rho$ and set $\hat z=(\hat x,\hat t)\in Q_s^{(\lambda)}(z_0)$. Moreover, we set $r=t-s$ and consider the
 uniform parabolic cylinder
\begin{equation}\label{new cylinder}\tilde Q_r^{(\lambda)}(\hat z):=B_r(\hat x)\times(\hat t-\lambda^{2-p_0}r^2,\hat t\,),\end{equation}
where $p_0=p(z_0)$. By abuse of notation, we continue to write $Q_r^{(\lambda)}(\hat z)$ for $\tilde Q_r^{(\lambda)}(\hat z)$.
It can be easily seen that $Q_r^{(\lambda)}(\hat z)\subset Q_t^{(\lambda)}(z_0)$.
    We divide our proof in two steps.

    Step 1: Obtaining a higher integrability type estimate on $Q_r^{(\lambda)}(\hat z)$, where $\hat z\in Q_s^{(\lambda)}(z_0)$.
    Introducing
the change of variables and the new functions
 \begin{equation}\label{change of variable}
	\begin{cases}
	\ \tilde u(y,\tau)=\lambda^{-1}u(\hat x+y,\hat t+\lambda^{2-p_0}\tau),\\
	\ \tilde A(y,\tau,w)=\lambda^{1-p_0}A(\hat x+y,\hat t+\lambda^{2-p_0}\tau,\lambda w),\\
\ \tilde \mu(y,\tau)=\mu(\hat x+y,\hat t+\lambda^{2-p_0}\tau),\\
\ \tilde p(y,\tau)=p(\hat x+y,\hat t+\lambda^{2-p_0}\tau),
	\end{cases}
\end{equation}
we rewrite the parabolic system \eqref{parabolic} in terms of the
new variables $(y,\tau)$ and new functions $\tilde u$ as follows:
\begin{equation}\label{new parabolic}\partial_\tau \tilde u-\operatorname{div}_y\left[\tilde \mu(y,\tau) \tilde A(y,\tau,D_y\tilde u)\right]=0,
\quad\text{in}\quad
Q_{2r}(0).\end{equation}
Our task now is to establish a reverse H\"older inequality similar to \eqref{higher integrability} for the map $\tilde u$. To this end,
we first observe that $\tilde u\in C([-4r^2,0];L^2(B_{2r}(0);\mathbb{R}^N))$ as well as
$|\tilde u|^{\tilde p(\cdot)}$, $|D_y\tilde u|^{\tilde p(\cdot)}\in L^1(Q_{2r}(0))$.
Moreover, we check that
\begin{equation*}\label{new weaksolution}\int_{Q_{2r}(0)}\left[\tilde u\cdot\partial_\tau \tilde\varphi-\tilde\mu(y,\tau)\big\langle \tilde A(
y,\tau,D_y\tilde u), D_y\tilde \varphi\big\rangle\right]
\,\mathrm {d}y\mathrm {d}\tau=0\end{equation*}
holds,
whenever $\tilde\varphi(y,\tau)\in C_0^\infty(Q_{2r}(0),\mathbb{R}^N)$. It follows that $\tilde u$ is indeed a weak solution to
\eqref{new parabolic}.
The next thing to do in the proof is to check that
the conditions in \cite[Theorem 2.2]{BD}
are fulfilled.
Since $\rho<\tilde\rho_0$ and $\lambda\geq1$, we use \eqref{E} and Lemma \ref{lambda} (3) to deduce that
\begin{equation}\begin{split}\label{new e}\int_{Q_{2r}(0)}|D_y\tilde u|^{\tilde p(y,\tau)}\,\mathrm {d}y\mathrm {d}\tau
&\leq \lambda^{p_2-p_1-2}\int_{Q_{2\rho}^{(\lambda)}(z_0)}|Du|^{p(z)}\,\mathrm {d}z
\leq c(n,E)\frac{E}{\lambda^2}\leq c_1(n,E),\end{split}\end{equation}
which is analogue to \eqref{E}.
Next, we check the continuity condition for the new exponent function $\tilde p$. From \eqref{pz1pz2} and \eqref{change of variable}$_4$,
we see that
\begin{equation}\begin{split}|\label{new p1p2}\tilde p(y_1,\tau_1)-\tilde p(y_2,\tau_2)|&\leq \omega_p\Big(\max\Big\{|y_1-y_2|,
\lambda^{\frac{2-p_0}{2}}\sqrt{|\tau_1-\tau_2|}\Big
\}\,
\Big)\\&\leq \omega_p(d_{\mathcal{P}}((y_1,\tau_1),(y_2,\tau_2))),\end{split}\end{equation}
since $p_0\geq2$ and $\lambda\geq1$. This shows that the exponent function $\tilde p$ satisfies \eqref{pz1pz2}.
Moreover, we consider the structural conditions for the vector field $\tilde A(y,\tau,w)$.
From Lemma \ref{lambda} (2), \eqref{A}$_2$ and \eqref{change of variable}$_2$, we obtain
\begin{equation}\begin{split}\label{new a1}|\tilde A(y,\tau,w)|&\leq \sqrt{L}\lambda^{1-p_0}(1+|\lambda w|^2)^{\frac{p(z)-1}{2}}
\\&\leq \sqrt{L}\lambda^{p(z)-p_0}
(1+|w|^2)^{\frac{p(z)-1}{2}}\leq L_1(1+|w|^2)^{\frac{\tilde p(y,\tau)-1}{2}},\end{split}\end{equation}
since $z=(\hat x+y,\hat t+\lambda^{2-p_0}\tau)$ and
$p(z)=\tilde p(y,\tau)$. Here, the constant $L_1$ depends only upon $L$, $n$, $E$, $\gamma_2$ and $M$.
This establishes the growth condition for $\tilde A$.
In view of \eqref{A}$_1$ and \eqref{change of variable}$_2$, we apply Lemma \ref{lambda} (2) to deduce that
\begin{equation}\begin{split}\label{new a2}\big\langle\partial_w\tilde A(y,\tau,w)\tilde w,\tilde w \big \rangle
&=\lambda^{2-p_0}\big\langle\partial_w A(\hat x+y,\hat t+\lambda^{2-p_0}\tau,\lambda w)\tilde w,\tilde w \big \rangle
\\&\geq \sqrt{\nu}\lambda^{p(z)-p_0}|w|^{p(z)-2}|\tilde w|^2\geq \nu_1|w|^{\tilde p(y,\tau)-2}|\tilde w|^2,
\end{split}\end{equation}
where the constant $\nu_1$ depends only upon $\nu$, $n$, $E$, $\gamma_2$ and $M$.
It follows from \eqref{new a2} that
\begin{equation}\begin{split}\label{new a3}
\big\langle\tilde A(y,\tau,w),w \big \rangle&
=\int_0^1\big\langle\partial_w\tilde A(y,\tau,sw)  w, w \big \rangle\,\mathrm{d}s+\big\langle\tilde A(y,\tau,0),w \big \rangle
\\&\geq \nu_1|w|^{p(z)}\int_0^1s^{p(z)-2}\,\mathrm{d}s-L_1|w|\geq c|w|^{\tilde p(y,\tau)}-c(L_1),
\end{split}\end{equation}
where we used Young's inequality in the last step. This shows the ellipticity condition for $\tilde A$.
Furthermore, we infer from \eqref{change of variable}$_3$ that $\sqrt{\nu}\leq\tilde\mu\leq\sqrt{L}$.
At this point, we observe from \eqref{new e}-\eqref{new a3} that the parabolic system \eqref{new parabolic} fulfills the desired
structure assumptions
and the results of \cite[Theorem 2.2]{BD} apply to weak solutions of \eqref{new parabolic}. More precisely,
there exist a constant $\delta_0>0$ and a radius $r_0>0$, depending only upon  $n$, $E$, $\gamma_2$, $\nu$, $L$ and $M$, such that
 for any $\hat r\leq r_0$ and $\delta\in(0,\delta_0]$
there holds
 \begin{equation}\label{reverse Holder inequality}
 \dashint_{Q_{\hat r/2}(0)}|D_y\tilde u|^{\tilde p(y,\tau)(1+\delta)}\,\mathrm {d}y\mathrm {d}\tau\leq c
   \left[\Big(\ \,\dashint_{Q_{\hat r}(0)}|D_y\tilde u|^{\tilde p(y,\tau)}\,\mathrm {d}y\mathrm {d}\tau
   \Big)^{1+\frac{1}{2}\delta p(\hat z)}+1\right],\end{equation}
   where the constant $c$ depends only upon  $n$, $E$, $\gamma_2$, $\nu$, $L$ and $M$.
Let $\delta\in (0,\delta_0]$ be a fixed constant, we now choose $\hat\rho_0=\hat\rho_0(\delta)>0$ small enough to have
 \begin{equation}\label{hatrho0}
\hat\rho_0\leq \min\{\tilde\rho_0,r_0\}\qquad\text{and}\qquad\omega_p(\hat\rho_0)\leq \frac{
\delta}{1+\frac{1}{2}\delta\gamma_2}.\end{equation}
 At this stage, we conclude from \eqref{hatrho0} that for any $\rho\leq\hat\rho_0$, there holds
$r=t-s\leq\frac{1}{2}\rho<r_0$ and $p_2<p_1(1+\delta)$. Therefore, the reverse H\"older inequality
\eqref{reverse Holder inequality} holds for $\hat r=r$. This gives
 \begin{equation*}
 \dashint_{Q_{r/2}(0)}|D_y\tilde u|^{p_1(1+\delta)}\,\mathrm {d}y\mathrm {d}\tau\leq c
   \left[\Big(\ \,\dashint_{Q_{r}(0)}|D_y\tilde u|^{p_2}\,\mathrm {d}y\mathrm {d}\tau
   \Big)^{1+\frac{1}{2}\delta p_2}+1\right].\end{equation*}
 Rescaling back to $u$, we deduce
   \begin{equation*}\begin{split}\dashint_{Q_{r/2}^{(\lambda)}(\hat z)}|Du|^{p_1(1+\delta)}
 \,\mathrm {d}z&\leq c\lambda^{\delta p_1-\frac{1}{2}\delta p_1p_2}
 \Big(\ \dashint_{Q_r^{(\lambda)}(\hat z)}|Du|^{p_2}\,\mathrm {d}z
   \Big)^{1+\frac{1}{2}\delta p_2}+c\lambda^{p_1(1+\delta)}.
   \end{split}\end{equation*}
From Lemma \ref{lambda}, we obtain
$\lambda\leq 1+|Dl|\leq 2\big(\,1+|(Du)_{Q_{\rho}^{(\lambda)}(z_0)}|\,\big)$ and therefore
\begin{equation*}\begin{split}\dashint_{Q_{r/2}^{(\lambda)}(\hat z)}|Du|^{p_1(1+\delta)}
 \,\mathrm {d}z
   &\leq c\lambda^{\delta p_1-\frac{1}{2}\delta p_1p_2}\Big(\ \dashint_{Q_r^{(\lambda)}(\hat z)}|Du|^{p_2}\,\mathrm {d}z
   \Big)^{1+\frac{1}{2}\delta p_2}+c\big(\,1+|(Du)_{Q_{\rho}^{(\lambda)}(z_0)}|\,\big)^{p_1(1+\delta)}.
   \end{split}\end{equation*}
In view of $p_1\leq p_2\leq p_1(1+\delta)$,
the interpolation inequality allows us to conclude that
 \begin{equation*}\begin{split}\dashint_{Q_{r/2}^{(\lambda)}(\hat z)}&|Du|^{p_1(1+\delta)}
 \,\mathrm {d}z\\ &\leq  c\Big(\ \dashint_{Q_r^{(\lambda)}(\hat z)}|Du|^{p_1(1+\delta)}\,\mathrm {d}z
   \Big)^{\frac{p_2-p_1}{\delta p_1}\left(1+\delta\frac{p_2}{2}\right)}
   \lambda^{\delta p_1-\frac{1}{2}\delta p_1p_2}\Big(\ \dashint_{Q_r^{(\lambda)}(\hat z)}|Du|^{p_1}\,\mathrm {d}z
   \Big)^{\left(1-\frac{p_2-p_1}{\delta p_1}\right)\left(1+\delta\frac{p_2}{2}\right)}
   \\&\quad +c|(Du)_{Q_{\rho}^{(\lambda)}(z_0)}|^{p_1(1+\delta)}+c,
 \end{split}\end{equation*}
 since
 \begin{equation*}\frac{1}{p_2}=\frac{\Theta}{p_1(1+\delta)}+\frac{1-\Theta}{p_1},
 \qquad\text{where}\qquad \Theta=\frac{p_2-p_1}{p_2}\frac{\delta+1}{\delta}.\end{equation*}
Furthermore, from \eqref{hatrho0}, we see that
 \begin{equation*}\sigma:=\frac{1}{\frac{p_2-p_1}{\delta p_1}(1+\delta\frac{p_2}{2})}>1.\end{equation*}
This enables us to use the Young's inequality with $\sigma$ and $\sigma/(\sigma-1)$ to obtain
 \begin{equation*}\begin{split}\int_{Q_{r/2}^{(\lambda)}(\hat z)}|Du|^{p_1(1+\delta)}
 \,\mathrm {d}z&\leq  \kappa\ \int_{Q_r^{(\lambda)}(\hat z)}|Du|^{p_1(1+\delta)}\,\mathrm {d}z
   \\&+\frac{c(\kappa)}{|Q_r^{(\lambda)}(\hat z)|^{\hat \beta(\delta
   )}}\lambda^{\frac{1}{1-\frac{p_2-p_1}{\delta p_1}\left(1+\frac{\delta p_2}{2}\right)}(\delta p_1-\frac{1}{2}\delta p_1p_2)}
   \Big(\ \int_{Q_r^{(\lambda)}(\hat z)}|Du|^{p_1}\,\mathrm {d}z
   \Big)^{1+\hat \beta(\delta)}
   \\&\quad +c|Q_{r/2}^{(\lambda)}(\hat z)|
   |(Du)_{Q_{\rho}^{(\lambda)}(z_0)}|^{p_1(1+\delta)}+c|Q_{r/2}^{(\lambda)}(\hat z)|,
 \end{split}\end{equation*}
 where $\kappa\leq1$ and
  \begin{equation*}\hat\beta(\delta)=\frac{\frac{\delta p_2}{2}}{1-\frac{p_2-p_1}{\delta p_1}(1+\frac{\delta p_2}{2})}.\end{equation*}

  Step 2: Proof of \eqref{higher integrability1} by using a covering argument. To start with,
  we cover the set $Q_s^{(\lambda)}(z_0)$ by a finite number of the
  uniform parabolic cylinders $\{Q_{r/2}^{(\lambda)}(\hat z_i)\}_{i=1}^M$ of the type
  \eqref{new cylinder}
  such that only a finite number
  of cylinders $Q_{r/2}^{(\lambda)}(\hat z_i)$ intersect.
    Therefore, we find that for any $\frac{1}{2}\rho<s<t<\rho$ there holds
\begin{equation*}\begin{split}
\int_{Q_s^{(\lambda)}(z_0)}|Du|^{p_1(1+\delta)}
 \,\mathrm {d}z\leq & \kappa N\ \int_{Q_t^{(\lambda)}(z_0)}|Du|^{p_1(1+\delta)}\,\mathrm {d}z
   \\&+\frac{c(\kappa, N)}{(\lambda^{2-p_0}(t-s)^{n+2})^{\hat\beta(\delta
   )}}\lambda^{\frac{1}{1-\frac{p_2-p_1}{\delta p_1}\left(1+\frac{\delta p_2}{2}\right)}(\delta p_1-\frac{1}{2}
   \delta p_1p_2)}\Big(\ \int_{Q_\rho^{(\lambda)}(z_0)}|Du|^{p_1}\,\mathrm {d}z
   \Big)^{1+\hat\beta(\delta)}
   \\&+cN|Q_{\rho}^{(\lambda)}(z_0)|
   |(Du)_{Q_{\rho}^{(\lambda)}(z_0)}|^{p_1(1+\delta)}+c|Q_{\rho}^{(\lambda)}(z_0)|.
\end{split}\end{equation*}
At this stage, we can use an iteration lemma (cf. \cite[Lemma 2.2]{DMS})
to re-absorb the first integral of the right-hand side into the left.
This gives
\begin{equation*}\begin{split}
\int_{Q_{\rho/2}^{(\lambda)}(z_0)}|Du|^{p_1(1+\delta)}
 \,\mathrm {d}z&\leq  \lambda^{\frac{1}{1-\frac{p_2-p_1}{\delta p_1}\left(1+\frac{\delta p_2}{2}\right)}
 (\delta p_1-\frac{1}{2}\delta p_1p_2)}\frac{c(\kappa, N)}{|Q_\rho^{(\lambda)}(z_0)|^{\hat \beta(\delta
   )}}\Big(\ \int_{Q_\rho^{(\lambda)}(z_0)}|Du|^{p_1}\,\mathrm {d}z
   \Big)^{1+\hat\beta(\delta)}
   \\&+c|Q_{\rho}^{(\lambda)}(z_0)|
   |(Du)_{Q_{\rho}^{(\lambda)}(z_0)}|^{p_1(1+\delta)}+c|Q_{\rho}^{(\lambda)}(z_0)|.
\end{split}\end{equation*}
Therefore, dividing by $|Q_{\rho}^{(\lambda)}(z_0)|$, we obtain
\begin{equation}\begin{split}\label{RHI}
\dashint_{Q_{\rho/2}^{(\lambda)}(z_0)}|Du|^{p_1(1+\delta)}
 \,\mathrm {d}z&\leq  c \lambda^{\frac{1}{1-\frac{p_2-p_1}{\delta p_1}\left(1+\frac{\delta p_2}{2}\right)}(\delta p_1-\frac{1}{2}\delta p_1p_2)}
 \Big(\ \dashint_{Q_\rho^{(\lambda)}(z_0)}|Du|^{p_1}\,\mathrm {d}z
   \Big)^{1+\hat\beta(\delta)}
   \\&+c
   |(Du)_{Q_{\rho}^{(\lambda)}(z_0)}|^{p_1(1+\delta)}+c.
\end{split}\end{equation}
On the other hand, we infer from the H\"older's inequality and Lemma \ref{lambda} (2) that
\begin{equation}\begin{split}\label{estimatestep}\dashint_{Q_\rho^{(\lambda)}(z_0)}|Du|^{p_1}\,\mathrm {d}z\leq
\Big(\ \dashint_{Q_\rho^{(\lambda)}(z_0)}|Du|^{p_0}\,\mathrm {d}z\Big)^{\frac{p_1}{p_0}}\leq c(1+|Dl|)^{p_1}\leq c(M,\gamma_2)\lambda^{p_1},
\end{split}\end{equation}
since $1+|Dl|\leq M\lambda$.
In view of
$\lambda\leq 1+|Dl|\leq 2\big(\,1+|(Du)_{Q_{\rho}^{(\lambda)}(z_0)}|\,\big)$ , we conclude from \eqref{estimatestep} that
\begin{equation}\begin{split}\label{estimatestep1}
&\lambda^{\frac{1}{1-\frac{p_2-p_1}{\delta p_1}\left(1+\frac{\delta p_2}{2}\right)}(\delta p_1-\frac{1}{2}\delta p_1p_2)}
 \Big(\ \dashint_{Q_\rho^{(\lambda)}(z_0)}|Du|^{p_1}\,\mathrm {d}z
   \Big)^{1+\hat\beta(\delta)}
\\&\quad\leq c(\delta,M,\gamma_2)\lambda^{\frac{1}{1-\frac{p_2-p_1}{\delta p_1}\left(1+\frac{\delta p_2}{2}\right)}
 (\delta p_1-\frac{1}{2}\delta p_1p_2)}
 \lambda^{p_1(1+\hat\beta(\delta))}
 \\&\quad= c(\delta,M,\gamma_2)\lambda^{p_1(1+\beta(\delta))}
 \leq  c\left[\Big(\ \,\dashint_{Q_{\rho}^{(\lambda)}(z_0)}|Du|^{p_1}\,\mathrm {d}z\Big)^{1+\beta(\delta)}+1\right],
   \end{split}\end{equation}
 where $\beta(\delta)$ satisfies \eqref{betadelta}.
 Plugging \eqref{estimatestep1} into \eqref{RHI} and taking into account
$\delta\leq \beta(\delta)$, we obtain the desired estimate \eqref{higher integrability1}. The proof of the lemma is now complete.
\end{proof}
\begin{remark}\label{first remark}
From the proof of Lemma \ref{fitst higher}, we conclude that for any fixed $\delta\in(0,\delta_0]$ the radius
$\hat\rho_0$ can be chosen according to \eqref{hatrho0} in order to obtain the higher integrability estimate \eqref{higher integrability1}.
\end{remark}
With the help of the preceding two lemmas, we can now prove the following extension of \cite[Lemma 2.9]{H}
to the parabolic case.
\begin{lemma}\label{importanthigher}Let $\theta\in (0,\frac{1}{2}]$, $M\geq1$ and $\gamma\geq1$ be fixed.
Suppose that $\Phi^{(\lambda)}(z_0,\rho,l)\leq \frac{1}{16}$, $\lambda\geq1$ and $\lambda\leq 1+|Dl|\leq M\lambda$.
Then, there exists a radius $\rho_0=\rho_0(n,E,\gamma_2,\gamma,\nu,L,M)>0$ such that
 for any $\rho\leq\rho_0$ and $q\in [p_1,p_1+6\omega_p(2\rho)]$,
there holds
\begin{equation}\label{higher integrability2}\dashint_{Q_{\theta \rho}^{(\lambda)}(z_0)}(1+|Dl|+|Du|)^q
\log^\gamma(e+|Dl|+|Du|)\,\mathrm {d}z\leq c\log^\gamma\left(\frac{1}{\rho}\right)(1+|Dl|)^{p_0},
\end{equation}
where the constant $c$ depends only upon  $n$, $E$, $\gamma_2$, $\gamma$, $\nu$, $L$, $M$ and $\theta$. Here,
$p_0=p(z_0)$ and $p_1$ is the minimum of $p(z)$ on $Q_{\rho}^{(\lambda)}(z_0)$.
\end{lemma}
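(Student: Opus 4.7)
Set $A = 1+|Dl|$ and $F = 1+|Dl|+|Du|$. The plan is to decouple the $|Dl|$-contribution from the $|Du|$-contribution inside the logarithm. From the elementary inequality $e+a+b \leq (e+a)(e+b)$ for $a,b \geq 0$, one obtains $\log^\gamma(e+|Dl|+|Du|) \leq C_\gamma[\log^\gamma(e+|Dl|) + \log^\gamma(e+|Du|)]$, and by Lemma \ref{lambda}(4) the first summand is bounded by $C\log^\gamma(1/\rho)$. Combined with $F^q \leq C(A^q + |Du|^q)$, the original integral then reduces to estimating the three averages $\dashint F^q$, $A^q\dashint \log^\gamma(e+|Du|)$ and $\dashint |Du|^q\log^\gamma(e+|Du|)$, each by the desired quantity $C\log^\gamma(1/\rho)A^{p_0}$.

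The two plain polynomial averages $\dashint F^q$ and $\dashint |Du|^q$ are bounded by $CA^{p_0}$. When $q\leq p_0$, H\"older's inequality and Lemma \ref{lambda}(2) suffice since $A^{q-p_0}\leq 1$. When $q>p_0$ I plan to invoke Lemma \ref{fitst higher} with the small exponent $\delta_1 = 7\omega_p(2\rho)/p_1$. A direct computation based on \eqref{betadelta}, the bound $p_2 - p_1 \leq \omega_p(\rho)$ and monotonicity of $\omega_p$ shows $\beta(\delta_1)\leq 2\delta_1$ for $\rho$ small; then Lemma \ref{lambda}(4) together with \eqref{exp p2-p1} gives $A^{p_1\beta(\delta_1)} \leq A^{14\omega_p(2\rho)} \leq C$, and a further H\"older step yields $\dashint |Du|^q \leq CA^{p_0}$. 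The two remaining averages are handled by the elementary inequality
\begin{equation*}
\log^\gamma(e+t) \leq C(\gamma)\,\epsilon^{-\gamma}(1+t^\epsilon), \qquad \epsilon>0,\; t\geq 0,
\end{equation*}
(which itself follows from $\log s \leq (s^\theta-1)/\theta$ applied with $s=e+t$ and $\theta=\epsilon/\gamma$), with the key tuning $\epsilon = c/\log(1/\rho)$ for a suitable small constant $c=c(n,\gamma_2)$. This choice is engineered so that $\epsilon^{-\gamma}\leq C\log^\gamma(1/\rho)$ produces precisely the target logarithmic factor, while Lemma \ref{lambda}(4) keeps $A^\epsilon\leq C$ bounded. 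The inequality thereby reduces both averages to a bound on $\dashint |Du|^{q+\epsilon}$, which I obtain by applying Lemma \ref{fitst higher} with $\delta_2 = 2\epsilon/p_1$ and re-running the chain of the previous paragraph, getting $\beta(\delta_2)\leq 2\delta_2$ and $A^{4\epsilon}\leq C$.

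The main technical obstacle is a parameter-compatibility issue. Since the higher integrability exponent $\delta_2 \sim 1/\log(1/\rho)$ depends on $\rho$, the radius $\hat\rho_0(\delta_2)$ of Remark \ref{first remark} must satisfy $\rho\leq\hat\rho_0(\delta_2)$, which by \eqref{hatrho0} demands $\omega_p(\rho) \lesssim \delta_2/(1+\delta_2\gamma_2/2)$, i.e.\ $\omega_p(\rho)\log(1/\rho)\leq C$. This is precisely the log-continuity condition \eqref{omegap} in action: it forces $\omega_p(\rho)$ to be strictly sub-$1/\log(1/\rho)$, permitting a higher integrability exponent just large enough to admit the hypothesis of Lemma \ref{fitst higher} and just small enough that the residual powers $A^{O(\epsilon)}$ of $A\leq C\rho^{-\gamma_2(n+2)/2}$ are controlled by $e^C$ rather than blowing up as $\rho^{-c}$. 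The same log-continuity is what prevents $A^{p_1\beta(\delta_2)}$ from absorbing the whole gain; were \eqref{omegap} weakened to mere continuity of $p(\cdot)$, the scheme would break. A minor bookkeeping point is that Lemma \ref{lambda} gives bounds on $Q_\rho^{(\lambda)}$ whereas we need them on $Q_{\theta\rho}^{(\lambda)}$, but since $\theta\leq 1/2$ this costs only a factor $\theta^{-(n+2)}$ in each average and is absorbed into the final constant $c=c(n,E,\gamma_2,\gamma,\nu,L,M,\theta)$.
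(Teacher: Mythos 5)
Your proof is correct in substance but takes a genuinely different and more elementary route than the paper's, with one constant in the higher-integrability step that needs re-tuning.

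The paper splits the \emph{domain} into $Q^+=\{|Du|\geq 1+|Dl|\}$ and its complement; on $Q^+$ it normalizes the logarithm into Luxemburg form and invokes the $L^{q+\sigma}\hookrightarrow L^q\log^\gamma L$ embedding (Proposition \ref{LlogL}) with a \emph{fixed} gain $\sigma=\delta_0$, then bounds the remaining mean-value factor $\log^\gamma\big(e+(|Du|^q)_{Q_{\theta\rho}^{(\lambda)}}\big)$ separately. You instead split \emph{arithmetically}, $\log^\gamma(e+|Dl|+|Du|)\lesssim\log^\gamma(e+|Dl|)+\log^\gamma(e+|Du|)$, kill the $|Dl|$-logarithm pointwise via Lemma \ref{lambda}(4), and convert the $|Du|$-logarithm to a small power via $\log^\gamma(e+t)\leq C(\gamma)\,\epsilon^{-\gamma}(1+t^\epsilon)$ with the $\rho$-dependent tuning $\epsilon\sim 1/\log(1/\rho)$ — bypassing Proposition \ref{LlogL} altogether. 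Both routes converge on Lemma \ref{fitst higher} applied with a $\rho$-dependent exponent, with \eqref{omegap} keeping the residual factors $(1+|Dl|)^{O(\omega_p(\rho))}$ and $(1+|Dl|)^{O(1/\log(1/\rho))}$ bounded via \eqref{exp p2-p1}. Your variant makes the role of log-continuity fully transparent and avoids the Orlicz machinery, at the cost of the self-referential admissibility check $\rho\leq\hat\rho_0(\delta_2(\rho))$ that you correctly flag; the paper compresses that bookkeeping into the single fixed choice \eqref{rho1}.

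The one concrete bug is the constant in $\delta_1=7\omega_p(2\rho)/p_1$: it is too small when $\gamma_2$ is large. The admissibility constraint coming from \eqref{hatrho0} is, for small $\rho$, essentially $\omega_p(\rho)\lesssim\delta_1$, i.e.\ $p_1\,\omega_p(\rho)\lesssim 7\,\omega_p(2\rho)$. Since $p_1$ can equal $\gamma_2$ and concavity of $\omega_p$ with $\omega_p(0^+)=0$ gives $\omega_p(2\rho)\leq 2\omega_p(\rho)$, this forces $\gamma_2\lesssim 14$, so your scheme does not close for large $\gamma_2$. The repair is trivial — take $\delta_1=C_0\gamma_2\,\omega_p(2\rho)$ with $C_0$ a large absolute constant — and the required bound $(1+|Dl|)^{p_1\beta(\delta_1)}\leq C$ survives, since the exponent is still $O(\gamma_2\,\omega_p(2\rho))$ and \eqref{exp p2-p1}-type reasoning applies. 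No analogous issue arises for $\delta_2$, because $\epsilon\sim 1/\log(1/\rho)$ eventually dominates any fixed multiple of $\omega_p(2\rho)$ by \eqref{omegap}.
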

\begin{proof}We prove the estimate \eqref{higher integrability2} by decomposing the integral of the left-hand side into two parts.
To this aim,
we decompose
$Q_{\theta \rho}^{(\lambda)}(z_0)=Q^+\cup Q^-$, where
\begin{equation*}\begin{split}Q^+&=\left\{z\in Q_{\theta \rho}^{(\lambda)}(z_0):\ |Du|\geq 1+|Dl|\right\}
\quad\text{and}\quad Q^-=Q_{\theta \rho}^{(\lambda)}(z_0)\setminus Q^+.
\end{split}\end{equation*}
We first consider the estimate on the set $Q^-$.
From Lemma \ref{lambda} (4) and \eqref{exp p2-p1}, we conclude that
\begin{equation*}\begin{split}\label{I-}I_-:&=\frac{1}{|Q_{\theta \rho}^{(\lambda)}(z_0)|}
\int_{Q^-}(1+|Dl|+|Du|)^q
\log^\gamma(e+|Dl|+|Du|)\,\mathrm {d}z\\&\leq c(1+|Dl|)^q\log^\gamma\big[2(1+|Dl|)\big]
\\&\leq c(1+|Dl|)^{p_0}\Big(\,\rho^{-(n+2)\frac{\gamma_2}{2}}\Big)^{q-p_0}\log^\gamma\Big[\,c\rho^{-(n+2)\frac{\gamma_2}{2}}\Big]
\\&\leq c\log^\gamma\left(\frac{1}{\rho}\right)(1+|Dl|)^{p_0},
\end{split}\end{equation*}
since $q\in [p_1,p_1+6\omega_p(2\rho)]$. Moreover, we estimate the left-hand side of \eqref{higher integrability2}
on the set $Q^+$ and obtain a decomposition as follows:
\begin{equation*}\begin{split}\label{I+12}I_+:&=\frac{1}{|Q_{\theta \rho}^{(\lambda)}(z_0)|}
\int_{Q^+}(1+|Dl|+|Du|)^q
\log^\gamma(e+|Dl|+|Du|)\,\mathrm {d}z
\\&\leq c\frac{1}{|Q_{\theta \rho}^{(\lambda)}(z_0)|}
\int_{Q_{\theta \rho}^{(\lambda)}(z_0)}|Du|^q
\log^\gamma(e+|Du|)\,\mathrm {d}z
\\&\leq c\ \dashint_{Q_{\theta \rho}^{(\lambda)}(z_0)}|Du|^q
\log^\gamma\Big(e+\frac{|Du|^q}{(|Du|^q)_{Q_{\theta\rho}^{(\lambda)}(z_0)}}\Big)\,\mathrm {d}z
\\&\quad+c\log^\gamma\Big(e+(|Du|^q)_{Q_{\theta\rho}^{(\lambda)}(z_0)}\Big)\ \dashint_{Q_{\theta \rho}^{(\lambda)}(z_0)}|Du|^q
\,\mathrm {d}z
=:I_+^{(1)}+I_+^{(2)},
\end{split}\end{equation*}
with the obvious meaning of $I_+^{(1)}$ and $I_+^{(2)}$. Next, we consider the estimate for $I_+^{(1)}$.
To start with,
we apply Proposition \ref{LlogL} with $(m,p,\sigma,f)$ replaced by $(n+1,q,\delta_0,|Du|)$, where $\delta_0$ is the exponent in Lemma
\ref{fitst higher}.
This gives
\begin{equation*}\begin{split}
I_+^{(1)}&\leq c\Big(\ \dashint_{Q_{\theta \rho}^{(\lambda)}(z_0)}|Du|^{q(1+\delta_0)}
\,\mathrm {d}z\Big)^{\frac{1}{1+\delta_0}}
\\&\leq c(\theta)\Big(\ \dashint_{Q_{\rho/2}^{(\lambda)}(z_0)}|Du|^{p_1\left(1+\delta_0+3\omega_p(2\rho)(1+\delta_0)\right)}
\,\mathrm {d}z+1\Big)^{\frac{1}{1+\delta_0}},
\end{split}\end{equation*}
since $\theta\leq\frac{1}{2}$.
At this point, let $r_0=r_0(n,E,\gamma_2,\nu,L,M)$ be the radius considered in the proof of Lemma \ref{fitst higher} and hence
the radius $\rho_0$ can be
determined a priori only in terms of $\epsilon_0$, $\delta_0$, $\tilde\rho_0$ and $r_0$, such that
\begin{equation}\label{rho1}\rho_0\leq\min\{\tilde \rho_0,r_0\}\qquad\text{and}\qquad \omega_p(2\rho_0)\leq\min\left\{\frac{\delta_0}
{1+13\delta_0\gamma_2},\frac{\epsilon_0}{3}\right\}.\end{equation}
From Remark \ref{first remark},
the conditions for
Lemma \ref{higher integrability} are satisfied with $\delta=\delta_0+3\omega_p(2\rho)(1+\delta_0)$.
We now apply Lemma \ref{higher integrability}
to conclude that for any $\rho\leq\rho_0$, there holds
\begin{equation*}\begin{split}
I_+^{(1)}\leq c
\Big(\ \dashint_{Q_{ \rho}^{(\lambda)}(z_0)}|Du|^{p_1}
\,\mathrm {d}z\Big)^{\frac{1+\beta(\delta)}{1+\delta_0}}+c,
\end{split}\end{equation*}
where the constant $c$ depends only upon  $n$, $E$, $\gamma_2$, $\gamma$, $\nu$, $L$, $M$ and $\theta$.
According to Lemma \ref{lambda} (2), we use the H\"older's inequality and \eqref{betadelta} to deduce
\begin{equation}\begin{split}\label{ddd}
\Big(\ \dashint_{Q_{ \rho}^{(\lambda)}(z_0)}|Du|^{p_1}
\,\mathrm {d}z\Big)^{\frac{1+\beta(\delta)}{1+\delta_0}-1}\leq c(M)(1+|Dl|)^{c\omega_p(2\rho)}\leq c
\rho^{-c\omega_p(\rho)}\leq c,\end{split}\end{equation}
where the constant $c$ depends only upon  $n$, $E$, $\gamma_2$ and $M$. Finally,
we use \eqref{ddd}, H\"older's inequality and Lemma \ref{lambda} (2) to obtain
\begin{equation}\begin{split}\label{I+1}
I_+^{(1)}\leq c
\Big(\ \dashint_{Q_{ \rho}^{(\lambda)}(z_0)}|Du|^{p_1}
\,\mathrm {d}z\Big)+c\leq c(1+|Dl|)^{p_1}
\leq c(1+|Dl|)^{p_0}.
\end{split}\end{equation}
To estimate $I_+^{(2)}$, we infer from \eqref{rho1} that the inequality $q\leq p_1+6\omega_p(2\rho)\leq p_1(1+\epsilon_0)$ holds.
In view of $p_0\geq2$ and $\lambda\leq 1+|Dl|$, we conclude from \eqref{E},
Proposition \ref{higher integrability0} and Lemma \ref{lambda} (4) that
\begin{equation*}\begin{split}
(|Du|^q)_{Q_{\theta\rho}^{(\lambda)}(z_0)}&\leq c(\theta)\ \dashint_{Q_{\rho/2}^{(\lambda)}(z_0)}|Du|^{q}
\,\mathrm {d}z
\\&
\leq c\lambda^{p_0-2}
   \left[\rho^{-n-2}\Big(\ \,\int_{Q_{\rho}}|Du|^{p(z)}\,\mathrm {d}z\Big)^{1+\frac{1}{2}\epsilon p_0}+1\right]
   \\&
\leq c(E)\rho^{-\gamma_2(\gamma_2-2)\frac{n+2}{2}-n-2}.
\end{split}\end{equation*}
By the H\"older's inequality, we adopt the same procedure as in the proof of \eqref{I+1}. This yields the estimate
\begin{equation*}\begin{split}\label{I+2}I_+^{(2)}\leq c\log^\gamma\left(\frac{1}{\rho}\right)\ \dashint_{Q_{\theta \rho}^{(\lambda)}(z_0)}|Du|^q
\,\mathrm {d}z\leq
c\log^\gamma\left(\frac{1}{\rho}\right)(1+|Dl|)^{p_0},
\end{split}\end{equation*}
for a constant $c$ depending only on the data $n$, $E$, $\gamma_2$, $\gamma$, $\nu$, $L$, $M$ and $\theta$.
which proves the lemma. Combining the estimates for $I_-$, $I_+^{(1)}$ and $I_+^{(2)}$, we obtain the desired estimate
\eqref{higher integrability2}.
\end{proof}
\section{Poincar\'e type inequality}
The aim of this section is to derive some smallness conditions for weak solutions near the regular points.
These smallness conditions will be the starting point for the proof of the main result in Sect. 7.
In order to reduce the alternative characterization of the regular
points, we will need a Poincar\'e type inequality for the weak solutions.
To achieve this, we first prove the following gluing lemma, which concerns weighted mean
values of weak solutions on different time slices..
\begin{lemma}\label{first poincare} Let $z_0\in\Omega_T$ and let $l$ be an affine function. Let $\rho_0>0$ be the radius in
Lemma \ref{importanthigher}.
Assume that $\rho<\rho_0$ and $\Phi(z_0,2\rho,l)\leq \frac{1}{16}$.
Then, for any $r$, $s\in\Lambda_\rho(t_0)$, there exists a constant $c$ depending only upon $n$, $N$, $E$, $\gamma_2$, $\nu$ and $L$
such that the inequality
\begin{equation}
\begin{split}\label{poincare1}\Big|&\int_{B_\rho(x_0)}(u_i(\cdot,r)-u_i(\cdot,s))\psi(x)\,\mathrm {d}x\Big|
\\&\leq c|r-s|^{\frac{1}{p_0}}\|D\psi\|_{L^{p_0}}|Q_\rho|^{\frac{p_0-1}{p_0}}\Big(\ \dashint_{Q_\rho(z_0)}|Du-Dl|^{p_0}
\,\mathrm {d}z\Big)^{\frac{p_0-1}{p_0}}
\\&\quad+c|r-s|^{\frac{1}{p_0}}(1+|Dl|)^{p_0-2}
\|D\psi\|_{L^{p_0}}|Q_\rho|^{\frac{p_0-1}{p_0}}\Big(\ \dashint_{Q_\rho(z_0)}|Du-Dl|^{p_0}
\,\mathrm {d}z\Big)^{\frac{1}{p_0}}
\\&\quad+c\left(\omega_p(\rho)\log\left(\frac{1}{\rho}\right)+v(\rho)\right)^{\frac{p_0-1}{p_0}}
|r-s|^{\frac{1}{p_0}}(1+|Dl|)^{p_0-1}
\|D\psi\|_{L^{p_0}}|Q_\rho|^{\frac{p_0-1}{p_0}}
\end{split}
\end{equation}
holds,
where $\psi\in C_0^\infty(B_\rho(x_0))$, $i\in \{1,\cdots,N\}$ and $p_0=p(z_0)$.
\end{lemma}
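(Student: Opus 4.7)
The plan is to test the weak form \eqref{weaksolution} against $\varphi(x,t)=\psi(x)\eta_\delta(t)e_i$, where $\eta_\delta$ is a smooth cutoff approximating $\chi_{[s,r]}$, and to pass to the limit $\delta\to 0$ using the continuity $u\in C^0([-T,0];L^2(\Omega,\mathbb{R}^N))$ (or, equivalently, Steklov averages). This yields the identity
\begin{equation*}
\int_{B_\rho(x_0)}\bigl(u_i(\cdot,r)-u_i(\cdot,s)\bigr)\psi\,\mathrm dx = -\int_s^r\int_{B_\rho(x_0)}\mu(z)\bigl\langle A(z,Du)e_i,\,D\psi\bigr\rangle\,\mathrm dx\,\mathrm dt,
\end{equation*}
so the task reduces to bounding the double integral on the right by the quantity on the right-hand side of \eqref{poincare1}.

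I would then decompose the integrand as
\begin{equation*}
\mu(z)A(z,Du) = (\mu)_{Q_\rho}A(z_0,Dl) + (\mu)_{Q_\rho}\bigl[A(z_0,Du)-A(z_0,Dl)\bigr] + \mu(z)\bigl[A(z,Du)-A(z_0,Du)\bigr] + \bigl[\mu(z)-(\mu)_{Q_\rho}\bigr]A(z_0,Du).
\end{equation*}
The first summand is constant in $z$ and so integrates to zero against $D\psi$ since $\psi\in C_0^\infty(B_\rho)$. For the linearisation summand, combining the $C^1$ bound \eqref{A}$_2$ with the fundamental theorem of calculus and splitting at $|Du-Dl|=1+|Dl|$ gives $|A(z_0,Du)-A(z_0,Dl)|\leq c(1+|Dl|)^{p_0-2}|Du-Dl|+c|Du-Dl|^{p_0-1}$. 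H\"older's inequality on $[s,r]\times B_\rho$ with conjugate exponents $p_0/(p_0-1)$ and $p_0$, using the $t$-independence of $D\psi$ to factor out $|r-s|^{1/p_0}\|D\psi\|_{L^{p_0}}$, produces exactly the first two terms of \eqref{poincare1}.

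For the remaining two summands, which carry the non-standard growth difficulty, I would proceed as follows. For the $z$-continuity of $A$, the assumption \eqref{Az1z2} with $d_{\mathcal{P}}(z,z_0)\leq 2\rho$ gives
\begin{equation*}
|A(z,Du)-A(z_0,Du)|\leq c\,\omega_p(2\rho)\,(1+|Du|)^{p_0-1+\omega_p(2\rho)}\,\bigl(1+\log(1+|Du|)\bigr),
\end{equation*}
and the $L^{p_0/(p_0-1)}$-norm of this quantity can be estimated via Lemma \ref{importanthigher} with $\gamma=p_0/(p_0-1)$ and an exponent $q$ slightly above $p_1$ (staying inside the admissible window $[p_1,p_1+6\omega_p(2\rho)]$), yielding a bound of $c\,\omega_p(2\rho)\log(1/\rho)(1+|Dl|)^{p_0-1}|Q_\rho|^{(p_0-1)/p_0}$. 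For the VMO summand, the elementary $L^\infty$--$L^1$ interpolation
\begin{equation*}
\|\mu-(\mu)_{Q_\rho}\|_{L^{p_0/(p_0-1)}(Q_\rho)}^{p_0/(p_0-1)}\leq\|\mu-(\mu)_{Q_\rho}\|_{L^\infty}^{1/(p_0-1)}\|\mu-(\mu)_{Q_\rho}\|_{L^1}\leq c\,v(\rho)\,|Q_\rho|,
\end{equation*}
paired with the $L^{p_0}$ bound on $(1+|Du|)^{p_0-1}$ from Lemma \ref{importanthigher}, produces the $v(\rho)^{(p_0-1)/p_0}(1+|Dl|)^{p_0-1}$ contribution.

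The main obstacle is matching exponents in the last line of \eqref{poincare1}: the VMO term emerges with the natural power $(p_0-1)/p_0$, whereas the coefficient-continuity term is initially produced with power $1$. To reconcile them I would invoke the logarithmic continuity \eqref{omegap}, which forces $\omega_p(\rho)\log(1/\rho)\leq c$ for small $\rho$, hence $\omega_p(\rho)\log(1/\rho)\leq c\bigl(\omega_p(\rho)\log(1/\rho)\bigr)^{(p_0-1)/p_0}$, allowing both errors to be absorbed into the single factor $\bigl(\omega_p(\rho)\log(1/\rho)+v(\rho)\bigr)^{(p_0-1)/p_0}$. Summing the contributions of the three non-vanishing summands then delivers \eqref{poincare1}.
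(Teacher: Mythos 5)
Your opening moves match the paper's: test against $\psi(x)\,\zeta_h(t)\,e_i$ with $\zeta_h$ a piecewise-linear approximation of $\chi_{[s,r]}$, pass $h\downarrow 0$, and drop the constant term $(\mu)_{Q_\rho}A(z_0,Dl)$. But the decomposition you choose after that differs from the paper's in a way that creates a genuine gap. The paper writes
\begin{equation*}
\mu\,A(z,Du)=\mu\,[A(z,Du)-A(z,Dl)]+[\mu-(\mu)_{Q_\rho}]\,A(z,Dl)+(\mu)_{Q_\rho}[A(z,Dl)-A(z_0,Dl)]+(\mu)_{Q_\rho}A(z_0,Dl),
\end{equation*}
so that the VMO and $z$-continuity errors carry the \emph{frozen} argument $Dl$, hence their $A$-factors are essentially the constant $(1+|Dl|)^{p_2-1}$ and can be pulled out of the $(x,t)$-integral. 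You instead place those two errors on $A(z_0,Du)$ and $A(z,Du)$. For the $z$-continuity term that is workable (it reduces, via Lemma \ref{importanthigher}, to the same $L^{p_0}\log^\gamma L$ machinery the paper already invokes for the $|Du-Dl|^{p_2-1}$ part of $L_1^{(2)}$), but for the VMO term it is not.

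Concretely, after H\"older against $D\psi\in L^{p_0}$ your VMO term requires the $L^{p_0/(p_0-1)}([s,r]\times B_\rho)$-norm of $|\mu-(\mu)_{Q_\rho}|(1+|Du|)^{p_0-1}$, i.e. control of $\int|\mu-(\mu)_{Q_\rho}|^{p_0/(p_0-1)}(1+|Du|)^{p_0}$. You cannot split this as you propose: if $|\mu-(\mu)_{Q_\rho}|$ is taken in $L^{p_0/(p_0-1)}$ and $(1+|Du|)^{p_0-1}$ also in $L^{p_0/(p_0-1)}$ (the natural space, since $(1+|Du|)^{p_0}\in L^1$), then the three H\"older exponents $\tfrac{p_0}{p_0-1},\tfrac{p_0}{p_0-1},p_0$ give $\tfrac{p_0-1}{p_0}+\tfrac{p_0-1}{p_0}+\tfrac{1}{p_0}=\tfrac{2p_0-1}{p_0}>1$; and if you literally mean $(1+|Du|)^{p_0-1}\in L^{p_0}$, that is $(1+|Du|)^{p_0(p_0-1)}\in L^1$, far beyond the available higher integrability. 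Any repair via $(1+|Du|)^{p_0(1+\epsilon_0)}\in L^1$ only yields $v(\rho)^{\epsilon_0/(1+\epsilon_0)}$-type powers, which is strictly weaker than the $v(\rho)^{(p_0-1)/p_0}$ the statement requires. The fix is the paper's: keep $Dl$ (not $Du$) in the $A$-argument of the VMO error, so $|A(z,Dl)|\le c(1+|Dl|)^{p_2-1}$ is a constant on $Q_\rho$, and then the single H\"older against $D\psi$ together with $\int_{Q_\rho}|\mu-(\mu)_{Q_\rho}|^{p_0/(p_0-1)}\le c\,v(\rho)|Q_\rho|$ gives the stated bound directly, with Lemma \ref{lambda}(3) converting $(1+|Dl|)^{p_2-1}$ into $(1+|Dl|)^{p_0-1}$.
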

\begin{proof}
Without loss of generality, we may assume that $z_0=0$.
Let $i\in \{1,\cdots,N\}$ and $0<h<\frac{1}{2}(r-s)$. In the weak formulation \eqref{weaksolution} we choose the test function
\begin{equation*}\varphi_h(x,t)=\zeta_h(t)\psi(x)e_i,\end{equation*}
where $\psi\in C_0^\infty(B_\rho)$ and $\zeta_h$ is a Lipschitz function
given by
 \begin{equation}\label{zeta}
	\zeta_h(t)=\begin{cases}
	\ 0,\quad &t\leq s,\\
	\ \frac{1}{h}(t-s),\quad &s<t<s+h,\\
\ -\frac{1}{h}(t-r),\quad &r-h<t<r,\\
\ 0,&t\geq r,
	\end{cases}
\end{equation}
where $-\rho^2<s<r<0$.
For the vector field $A=(A^1,\cdots ,A^N)\in \mathbb{R}^{n\times N}$, we deduce
\begin{equation*}\int_{-\rho^2}^0\int_{B_\rho} u_i\cdot\partial_t \zeta_h\psi\,\mathrm {d}x\mathrm {d}t=
\int_{-\rho^2}^0\int_{B_\rho}\zeta_h\mu(z) A^i(z,Du)\cdot D\psi
\,\mathrm {d}x\mathrm {d}t.\end{equation*}
From the definition of $\zeta_h$, we pass to the limit $h\downarrow 0$ to infer that
\begin{equation*}\int_{B_\rho}(u_i(\cdot,r)-u_i(\cdot,s))\psi(x)\,\mathrm {d}x=
\int_s^r\int_{B_\rho}\mu(z) A^i(z,Du)\cdot D\psi\,\mathrm {d}x\mathrm {d}t.
\end{equation*}
We now proceed to estimate the integral with respect to space from the right-hand side of the above equation by
\begin{equation*}\begin{split}
\int_{B_\rho\times\{t\}}&\mu(z) A^i(z,Du)\cdot D\psi\,\mathrm {d}x
\\&=\int_{B_\rho\times\{t\}}\mu(z) [A^i(z,Du)-A^i(z,Dl)]\cdot D\psi\,\mathrm {d}x
\\&\quad+\int_{B_\rho\times\{t\}}[\mu(z)-(\mu)_{Q_\rho}] A^i(z,Dl)\cdot D\psi\,\mathrm {d}x
\\&\quad+\int_{B_\rho\times\{t\}}(\mu)_{Q_\rho}[ A^i(z,Dl)-A^i(0,Dl)]\cdot D\psi\,\mathrm {d}x
\\&=:L_1+L_2+L_3,
\end{split}
\end{equation*}
with the obvious meaning of $L_1$, $L_2$ and $L_3$.
At this point, we define
$p_1=\inf _{Q_{\rho}}p(z)$ and $p_2=\sup _{Q_{\rho}}p(z)$.
By the fundamental theorem of calculus, we infer from \eqref{A}$_2$ that
\begin{equation*}\begin{split}
|L_1|&\leq\sqrt{L}\Big|\int_{B_\rho\times\{t\}}\int_0^1\partial_w A^i(z,Dl+s(Du-Dl))\cdot(Du-Dl)
\,\mathrm{d}s\cdot D\psi\,\mathrm {d}x\Big|
\\&\leq \sqrt{L}\int_{B_\rho\times\{t\}}(1+|Dl|+|Du-Dl|)^{p_2-2}|Du-Dl||D\psi|\,\mathrm {d}x
\\&\leq c\int_{B_\rho\times\{t\}}(1+|Dl|)^{p_2-2}|Du-Dl||D\psi|\,\mathrm {d}x
\\&\qquad +c\int_{B_\rho\times\{t\}}|Du-Dl|^{p_2-1}|D\psi|\,\mathrm {d}x
=:L_1^{(1)}+L_1^{(2)},
\end{split}
\end{equation*}
since $p_2\geq2$.
We note that the choice of $\rho<\rho_0$ allows us to use
Lemma \ref{lambda} with $\lambda=1$. Now we come to the estimate of $L_1^{(1)}$.
By Lemma \ref{lambda} (3) and
the H\"older's inequality, we obtain
\begin{equation*}\begin{split}
\int_s^rL_1^{(1)}\,\mathrm {d}t
&\leq c(1+|Dl|)^{p_0-2}|r-s|^{\frac{1}{p_0}}\|D\psi\|_{L^{p_0}}
\Big(\int_s^r \int_{B_\rho}|Du-Dl|^{\frac{p_0}{p_0-1}}
\,\mathrm {d}z\Big)^{\frac{p_0-1}{p_0}}\\&
\leq c(1+|Dl|)^{p_0-2}|r-s|^{\frac{1}{p_0}}\|D\psi\|_{L^{p_0}}
|Q_\rho|^{\frac{p_0-1}{p_0}}\Big(\ \dashint_{Q_\rho}|Du-Dl|^{p_0}
\,\mathrm {d}z\Big)^{\frac{1}{p_0}},
\end{split}
\end{equation*}
since $p_0\geq2$. To estimate $L_1^{(2)}$,
we use the H\"older's inequality to deduce
\begin{equation*}\begin{split}
L_1^{(2)}\leq c\|D\psi\|_{L^{p_0}}\Big(\int_{B_\rho\times\{t\}}|Du-Dl|^{\frac{p_0}
{p_0-1}(p_2-1)}\,\mathrm {d}x\Big)^{\frac{p_0-1}{p_0}}.
\end{split}\end{equation*}
In the case $|Du-Dl|\geq1$, we use the fundamental theorem of calculus and \eqref{p2-p1} to obtain
\begin{equation}\begin{split}\label{differenceDu}
|Du&-Dl|^{\frac{p_0}
{p_0-1}(p_2-1)}-|Du-Dl|^{p_0}
\\&\leq |Du-Dl|^{\frac{p_0}
{p_0-1}(p_2-1)}\Big(p_0\frac{p_2-p_0}{p_0-1}\Big)\log(|Du-Dl|)
\\&\leq 2\omega_p(\rho)|Du-Dl|^{\frac{p_0}
{p_0-1}(p_2-1)}\log(1+|Du-Dl|),
\end{split}\end{equation}
since $p_0\geq2$.
In the case $|Du-Dl|<1$, we observe that
$|Du-Dl|^{\frac{p_0}
{p_0-1}(p_2-1)}\leq |Du-Dl|^{p_0}.$ Using this together with
\eqref{differenceDu}, we conclude that
\begin{equation*}\begin{split}L_1^{(2)}\leq &c\|D\psi\|_{L^{p_0}}\Big(\int_{B_\rho\times\{t\}}|Du-Dl|^{p_0
}\,\mathrm {d}x\Big)^{\frac{p_0-1}{p_0}}
\\&+c\|D\psi\|_{L^{p_0}}\Big(\omega_p(\rho)\int_{B_\rho\times\{t\}}|Du-Dl|^{\frac{p_0}
{p_0-1}(p_2-1)}\log(1+|Du-Dl|)\,\mathrm {d}x\Big)^{\frac{p_0-1}{p_0}}.
\end{split}\end{equation*}
Noting that $p_0\frac{p_2-1}{p_0-1}\leq p_1+2\omega_p(\rho)$, we can apply Lemma \ref{importanthigher} with
$\theta=\frac{1}{2}$, $\gamma=1$ and $\lambda=1$. Therefore, we apply Lemma \ref{importanthigher} and
the H\"older's inequality to obtain
\begin{equation*}\begin{split}\int_s^rL_1^{(2)}\,\mathrm {d}t&\leq c|r-s|^{\frac{1}{p_0}}\|D\psi\|_{L^{p_0}}
|Q_\rho|^{\frac{p_0-1}{p_0}}\Big(\ \dashint_{Q_\rho}|Du-Dl|^{p_0}
\,\mathrm {d}z\Big)^{\frac{p_0-1}{p_0}}
\\&\quad+c\|D\psi\|_{L^{p_0}}(r-s)^{\frac{1}{p_0}}\Big(\int_{Q_\rho}
\omega_p(\rho)|Du-Dl|^{p_0\frac{p_2-1}{p_0-1}}\log(1+|Du|+|Dl|)
\,\mathrm {d}z\Big)^{\frac{p_0-1}{p_0}}
\\&\leq c|r-s|^{\frac{1}{p_0}}\|D\psi\|_{L^{p_0}}
|Q_\sigma|^{\frac{p_0-1}{p_0}}\Big(\ \dashint_{Q_\rho}|Du-Dl|^{p_0}
\,\mathrm {d}z\Big)^{\frac{p_0-1}{p_0}}
\\&\quad+c\|D\psi\|_{L^{p_0}}|r-s|^{\frac{1}{p_0}}|Q_\rho|^{\frac{p_0-1}
{p_0}}(1+|Dl|)^{p_0-1}\Big[\omega_p(\rho)\log\Big(\frac{1}{\rho}\Big)\Big]
^{\frac{p_0-1}{p_0}}.
\end{split}\end{equation*}
Consequently, we infer that
\begin{equation}\begin{split}\label{L1}
\int_s^rL_1\,\mathrm {d}t&\leq c(1+|Dl|)^{p_0-2}|r-s|^{\frac{1}{p_0}}|Q_\rho|^{\frac{p_0-1}{p_0}}
\Big(\ \dashint_{Q_\rho}|Du-Dl|^{p_0}\,\mathrm {d}z\Big)^{\frac{1}{p_0}}
\|D\psi\|_{L^{p_0}}
\\&\quad+c|r-s|^{\frac{1}{p_0}}\|D\psi\|_{L^{p_0}}
|Q_\rho|^{\frac{p_0-1}{p_0}}\Big(\ \dashint_{Q_\rho}|Du-Dl|^{p_0}
\,\mathrm {d}z\Big)^{\frac{p_0-1}{p_0}}
\\&\quad+c\|D\psi\|_{L^{p_0}}|r-s|^{\frac{1}{p_0}}|Q_\rho|^{\frac{p_0-1}
{p_0}}(1+|Dl|)^{p_0-1}\Big[\omega_p(\rho)\log\Big(\frac{1}{\rho}\Big)\Big]
^{\frac{p_0-1}{p_0}}.
\end{split}\end{equation}
We now turn our
attention to the estimate of $L_2$. Since $\sqrt{\nu}\leq \mu(z)\leq \sqrt{L}$, we conclude from
Lemma \ref{lambda} (3) and the growth condition \eqref{A}$_2$ that
\begin{equation}\begin{split}\label{L2}
\int_s^rL_2\,\mathrm {d}t&=\int_s^r\int_{B_\rho}[\mu(z)-(\mu)_{Q_\rho}] A^i(z,Dl)\cdot D\psi\,\mathrm {d}x\mathrm {d}t
\\&\leq \sqrt{L}(1+|Dl|)^{p_2-1}\int_s^r\int_{B_\rho}|\mu(z)-(\mu)_{Q_\rho}||D\psi|\,\mathrm {d}x\mathrm {d}t
\\&\leq c(1+|Dl|)^{p_0-1}(r-s)^{\frac{1}{p_0}}\|D\psi\|_{L^{p_0}}|Q_\rho|^{\frac{p_0-1}{p_0}}v(\rho)^{\frac{p_0-1}{p_0}}.
\end{split}\end{equation}
Finally, we consider the estimate for $L_3$. Recalling that $\sqrt{\nu}\leq \mu(z)\leq \sqrt{L}$, we infer from
Lemma \ref{lambda} (3), (4) and the continuity condition \eqref{Az1z2} to obtain
\begin{equation}\begin{split}\label{L3}
\int_s^rL_3\,\mathrm {d}t&=\int_s^r\int_{B_\rho}(\mu)_{Q_\rho}[ A^i(z,Dl)-A^i(0,Dl)]\cdot D\psi\,\mathrm {d}x\mathrm {d}t
\\&\leq \sqrt{L}\int_s^r\int_{B_\rho}\omega_p(\rho)\Big[(1+|Dl|^2)^{\frac{p(z)-1}{2}}
+(1+|Dl|^2)^{\frac{p_0-1}{2}}\Big]|D\psi|\,\mathrm {d}x\mathrm {d}t
\\ &\qquad \times\Big[1+\log(1+|Dl|^2)\Big]
\\&\leq c(1+|Dl|)^{p_0-1}\omega_p(\rho)\log\Big(\frac{1}{\rho}\Big)
(r-s)^{\frac{1}{p_0}}\|D\psi\|_{L^{p_0}}|Q_\rho|^{\frac{p_0-1}{p_0}}.
\end{split}\end{equation}
Combining the estimates \eqref{L1}-\eqref{L3}, we obtain the desired estimate \eqref{poincare1}. This completes the proof.
\end{proof}
With the help of Lemma \ref{first poincare},
we are now ready for the proof of Poincar\'e inequality for weak solutions to the parabolic system \eqref{parabolic}.
\begin{lemma}\label{poincare lemma} Let $z_0\in\Omega_T$ and let $l$ be an affine function. Let $\rho_0>0$ be the radius in
Lemma \ref{importanthigher}.
Assume that $\rho<\rho_0$ and $\Phi(z_0,2\rho,l)\leq \frac{1}{16}$.
Then, there exists a constant $c$ depending only upon $n$, $E$, $\gamma_2$, $\nu$ and $L$
such that the inequality
\begin{equation}\begin{split}\label{Poincare2}&\dashint_{Q_\rho(z_0)}\Big|\frac{u-(u)_{Q_\rho(z_0)}-Dl\cdot (x-x_0)}{\rho}\Big|^2
+\Big|\frac{u-(u)_{Q_\rho(z_0)}-Dl\cdot (x-x_0)}{\rho}\Big|^{p_0}\,\mathrm {d}z
\\&\leq c(1+|Dl|)^{p_0(p_0-2)}\left[\ \dashint_{Q_{\rho}(z_0)}|Du-Dl|^{p_0}\,\mathrm {d}z+\Big(\ \dashint_{Q_{\rho}(z_0)
}|Du-Dl|^{p_0}\,\mathrm {d}z
\Big)^{p_0-1}\right]
\\&\quad +c(1+|Dl|)^{p_0(p_0-2)}\left[\ \Big(\ \dashint_{Q_{\rho}(z_0)}|Du-Dl|^{p_0}\,\mathrm {d}z\Big)^{\frac{2}{p_0}(p_0-1)}
+\Big(\ \dashint_{Q_{\rho}(z_0)}|Du-Dl|^{p_0}\,\mathrm {d}z
\Big)^{\frac{2}{p_0}}\right]
\\&\quad +c(1+|Dl|)^{p_0(p_0-1)}\left[v^{\frac{2(p_0-1)}{p_0}}(\rho)+\Big(\omega_p(\rho)\log\Big(\frac{1}{\rho}\Big)\Big)
^{\frac{2(p_0-1)}{p_0}}\right]
\end{split}\end{equation}
holds, where $p_0=p(z_0)$.
\end{lemma}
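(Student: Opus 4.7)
The plan is to estimate $w(x,t):=u(x,t)-(u)_{Q_\rho(z_0)}-Dl\cdot(x-x_0)$ via the decomposition $w=(w-[w]_\psi(t))+[w]_\psi(t)$, where $[w]_\psi(t):=(\int\psi)^{-1}\int_{B_\rho(x_0)}w(\cdot,t)\psi\,\mathrm{d}x$ is a $\psi$-weighted spatial mean on time slices, with a standard cut-off $\psi\in C_0^\infty(B_\rho(x_0))$ satisfying $\psi\equiv 1$ on $B_{\rho/2}(x_0)$, $0\leq\psi\leq 1$ and $|D\psi|\leq c/\rho$; in particular $\int\psi\simeq|B_\rho|$ and $\|D\psi\|_{L^{p_0}}\leq c\rho^{-1+n/p_0}$. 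Note that $Dw=Du-Dl$ and $(w)_{Q_\rho(z_0)}=0$. The spatial deviation $w-[w]_\psi(t)$ will be controlled slicewise by the weighted Poincar\'e inequality on $B_\rho(x_0)$, while the time-slice mean $[w]_\psi(t)$ will be controlled by the gluing estimate of Lemma \ref{first poincare}.

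For the spatial part, applying Poincar\'e with $q\in\{2,p_0\}$, integrating in $t$, dividing by $\rho^q$, and invoking Jensen's inequality when $q=2$ yields the terms $\dashint_{Q_\rho(z_0)}|Du-Dl|^{p_0}\,\mathrm{d}z$ and $\big(\dashint_{Q_\rho(z_0)}|Du-Dl|^{p_0}\,\mathrm{d}z\big)^{2/p_0}$ on the right-hand side of \eqref{Poincare2}. For the temporal part, write $[w]_\psi(t)=\dashint_{\Lambda_\rho(t_0)}\big([w]_\psi(t)-[w]_\psi(s)\big)\,\mathrm{d}s+\dashint_{\Lambda_\rho(t_0)}[w]_\psi(s)\,\mathrm{d}s$. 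The constant term is bounded by $c\rho\,\dashint_{Q_\rho(z_0)}|Du-Dl|\,\mathrm{d}z$: indeed, setting $\bar w(t):=\dashint_{B_\rho(x_0)}w(\cdot,t)\,\mathrm{d}x$ one has $\dashint_{\Lambda_\rho(t_0)}\bar w(t)\,\mathrm{d}t=0$ since $(w)_{Q_\rho(z_0)}=0$, so $\int_{Q_\rho}w\psi\,\mathrm{d}z=\int_{Q_\rho}(w-\bar w(t))\psi\,\mathrm{d}z$ and spatial Poincar\'e on slices gives the claim. The oscillation $[w]_\psi(t)-[w]_\psi(s)$ is estimated by Lemma \ref{first poincare} applied component-wise to our $\psi$: substituting $\|D\psi\|_{L^{p_0}}\leq c\rho^{-1+n/p_0}$, $|Q_\rho|^{(p_0-1)/p_0}=c\rho^{(n+2)(p_0-1)/p_0}$, $|r-s|^{1/p_0}\leq c\rho^{2/p_0}$ and dividing by $\int\psi\simeq\rho^n$, the $\rho$-powers telescope to a net factor of $\rho$, giving
\[
\tfrac{1}{\rho}\big|[w]_\psi(t)-[w]_\psi(s)\big|\leq c(1+|Dl|)^{p_0-2}G^{\tfrac{1}{p_0}}+cG^{\tfrac{p_0-1}{p_0}}+c(1+|Dl|)^{p_0-1}\big[\omega_p(\rho)\log(\tfrac{1}{\rho})+v(\rho)\big]^{\tfrac{p_0-1}{p_0}},
\]
with $G:=\dashint_{Q_\rho(z_0)}|Du-Dl|^{p_0}\,\mathrm{d}z$.

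Raising this bound to the $p_0$-th power produces the first bracket of \eqref{Poincare2} with exponents $1$ and $p_0-1$ on $G$ and the factor $(1+|Dl|)^{p_0(p_0-2)}$; raising it to the second power produces the second bracket with exponents $2/p_0$ and $2(p_0-1)/p_0$. The VMO/continuity contribution acquires the coefficient $(1+|Dl|)^{p_0(p_0-1)}$ after squaring, and since $\omega_p(\rho)\log(1/\rho)+v(\rho)\leq 1$ for $\rho$ sufficiently small and $p_0-1\geq 2(p_0-1)/p_0$ for $p_0\geq 2$, the $L^{p_0}$ version of this term is dominated by its $L^2$ version, producing the displayed exponent $2(p_0-1)/p_0$. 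The main obstacle is the treatment of the $t$-average $\dashint_{\Lambda_\rho(t_0)}[w]_\psi(s)\,\mathrm{d}s$, which is not automatically zero; one has to exploit $(w)_{Q_\rho(z_0)}=0$ together with spatial Poincar\'e on time slices to dominate it by a gradient term. A secondary but essential check is that the $\rho$-powers from the gluing lemma combine to exactly the factor $\rho$ required to match $|w/\rho|^q$ on the left-hand side.
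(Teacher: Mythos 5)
Your proposal is correct and follows essentially the same route as the paper: both control the spatial deviation on time slices via a (weighted) Poincar\'e inequality and control the temporal oscillation of a $\psi$-weighted slice mean via the gluing estimate of Lemma \ref{first poincare}, then combine by the triangle inequality and track the $\rho$-powers. The paper organizes the triangle inequality through four intermediate mean quantities ($\tilde u_\rho(t)$, $\tilde u^\psi(t)$, $(u)^\psi$, $(u)_{Q_\rho}$) rather than your three-term split, and it handles your constant term $\dashint_{\Lambda_\rho}[w]_\psi(s)\,\mathrm{d}s$ by the same device you use, namely $(w)_{Q_\rho(z_0)}=0$ plus slicewise Poincar\'e (its display (4.15)); these are only bookkeeping differences.
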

\begin{proof}
Without loss of generality, we may assume that $z_0=0$.
We choose a radial function $\psi\in C_0^\infty(B_1)$
with $\int_{B_1}\psi(x)\,\mathrm{d}x=1$. Let $\psi_\rho(x)=\rho^{-n}\psi(x/\rho)$. For $t\in (-\rho^2,0)$, we define
some different types of mean values
\begin{equation*}\tilde u_\rho(t)=\ \dashint_{B_\rho}u(x,t)\,\mathrm{d}x,
\quad \tilde u^{\psi}(t)=\ \dashint_{B_\rho}u(x,t)\psi_\rho(x)\,\mathrm{d}x\quad\text{and}\quad
(u)^{\psi}=\ \dashint_{-\rho^2}^0\tilde u^{\psi}(t)\,\mathrm{d}t.
\end{equation*}
For any fixed $t\in(-\rho^2,0)$, we infer from Lemma \ref{first poincare} that
\begin{equation}\begin{split}\label{difference1}
|\tilde u^{\psi}(t)&-(u)^{\psi}|\leq\ \dashint_{-\rho^2}^0|\tilde u^{\psi}(t)-
\tilde u^{\psi}(s)|\,\mathrm{d}s
\\&\leq c\sum_{i=1}^N\ \dashint_{-\rho^2}^0\Big|\int_{B_\rho}(u_i(x,t)-u_i(x,s))\psi_\rho(x)\,\mathrm{d}x\Big|\,\mathrm{d}s
\\&\leq c\rho\Big(\ \dashint_{Q_\rho}|Du-Dl|^{p_0}\,\mathrm{d}z\Big)^{\frac{p_0-1}{p_0}}
+c\rho(1+|Dl|)^{p_0-2}\Big(\ \dashint_{Q_\rho}|Du-Dl|^{p_0}\,\mathrm{d}z\Big)^{\frac{1}{p_0}}
\\&\quad+c\rho(1+|Dl|)^{p_0-1}\Big(v^{\frac{p_0-1}{p_0}}(\rho)+\Big(\omega_p(\rho)\log\Big(\frac{1}{\rho}\Big)\Big)
^{\frac{p_0-1}{p_0}}\Big).
\end{split}\end{equation}
On the other hand, we apply the Poincar\'e inequality retrieved from \cite[Theorem 12.36]{Leoni} to infer that there exists a constant
$c$ depending only upon $n$ such that
\begin{equation}\begin{split}\label{independent of p}
\dashint_{B_\rho}|u(x,t)-\tilde u_\rho(t)-Dl\cdot x|^{p_0}
\,\mathrm{d}x
\leq c\rho^{p_0}\ \dashint_{B_{\rho}}|Du-Dl|^{p_0}\,\mathrm{d}x.
\end{split}\end{equation}
We emphasize that the constant $c$ in \eqref{independent of p} is independent of $p(z)$ and therefore the estimate \eqref{independent of p} is
suitable for our purpose. To this end,
we use \eqref{independent of p} slicewise to deduce
\begin{equation}\begin{split}\label{difference2}
|\tilde u^{\psi}(t)-\tilde u_\rho(t)|^{p_0}&=\Big|\int_{B_\rho}(u(x,t)-\tilde u_\rho(t)-Dl\cdot x)\psi_\rho(x)\,\mathrm{d}x\Big|^{p_0}
\\& \leq c(n)\rho^{p_0}\ \dashint_{B_{\rho}}|Du-Dl|^{p_0}\,\mathrm{d}x,
\end{split}\end{equation}
for any $t\in(-\rho^2,0)$.
Furthermore,
the inequality \eqref{difference2} also implies that
\begin{equation}\begin{split}\label{difference3}
|(u)^{\psi}-u_{Q_\rho}|^{p_0}\leq c(n)\rho^{p_0}\ \dashint_{Q_{\rho}}|Du-Dl|^{p_0}\,\mathrm{d}z.
\end{split}\end{equation}
Combining the estimates \eqref{difference1}-\eqref{difference3}, we conclude from the triangle inequality that
\begin{equation*}\begin{split}
&\dashint_{Q_\rho}
\Big|\frac{u-(u)_{Q_\rho}-Dl\cdot x}{\rho}\Big|^{p_0}\,\mathrm {d}z
\\&\leq c\ \dashint_{Q_\rho}
\Big|\frac{u-\tilde u_\rho(t)-Dl\cdot x}{\rho}\Big|^{p_0}\,\mathrm {d}z+c\rho^{-p_0}\sup_{-\rho^2<t<0}|\tilde u^{\psi}(t)-(u)^{\psi}|^{p_0}
\\&\quad + c\rho^{-p_0}\sup_{-\rho^2<t<0}|\tilde u^{\psi}(t)-\tilde u_\rho(t)|^{p_0}+c\rho^{-p_0}|(u)^{\psi}-u_{Q_\rho}|^{p_0}
\\&\leq c\ \dashint_{Q_{\rho}}|Du-Dl|^{p_0}\,\mathrm{d}x+c\Big(\ \dashint_{Q_\rho}|Du-Dl|^{p_0}\,\mathrm{d}z\Big)^{p_0-1}
\\&\quad+c(1+|Dl|)^{p_0(p_0-2)}\ \dashint_{Q_\rho}|Du-Dl|^{p_0}\,\mathrm{d}z
\\&\quad+c(1+|Dl|)^{p_0(p_0-1)}\Big(v^{p_0-1}(\rho)+\Big(\omega_p(\rho)\log\Big(\frac{1}{\rho}\Big)\Big)
^{p_0-1}\Big).
\end{split}\end{equation*}
Therefore,
we have proved the desired estimate \eqref{Poincare2}
by the H\"older's inequality. The proof of the lemma is now complete.
\end{proof}
Before giving the precise statement of the smallness conditions near the regular points, we introduce the excess functional
for weak solutions.
To this end, we
let $z_0\in\Omega_T$, $\rho>0$ and let $l$ be an affine function. Moreover, we assume that $1\leq \lambda\leq 1+|Dl|$
and the excess functional is defined by
\begin{equation}\label{psi}
\Psi_\lambda(z_0,\rho,l)=\ \dashint_{Q_\rho^{(\lambda)}(z_0)}
\Big|\frac{u-l}{\rho(1+|Dl|)}\Big|^2\,\mathrm {d}z+\ \dashint_{Q_\rho^{(\lambda)}(z_0)}
\Big|\frac{u-l}{\rho(1+|Dl|)}\Big|^{p_0}\,\mathrm {d}z,
\end{equation}
where $p_0=p(z_0)$. Specifically, if $\lambda=1$, then we simply write $\Psi(z_0,\rho,l)$
for $\Psi_1(z_0,\rho,l)$.
Similarly, if $\lambda=1$, then we simply write $l_{z_0,\ \rho}$
for $l_{z_0,\ \rho}^{(1)}$.
We are now in a position to give a new characterization for the regular points in terms of the excess functional and
the following proposition is our main result in this section.
\begin{proposition}\label{start} Let $\Sigma_1$ and $\Sigma_2$ be the sets defined in Theorem \ref{main}
and fix $\mathfrak z_0\in \Omega_T\setminus(\Sigma_1\cup\Sigma_2)$. Then there exist a universal constant $M_0>0$ and
a constant $\hat\epsilon=\hat\epsilon(n,N,\nu,L,E,\gamma_2,M_0)>0$
such that the following holds: Whenever $\epsilon_*<\hat\epsilon$ is a fixed constant, then
there exists a radius $\rho=\rho(n,N,\nu,L,E,\gamma_2,M_0,\epsilon_*)>0$ such that
for any $z_0\in Q_{\rho/8}(\mathfrak z_0)$ there holds
 \begin{equation}\label{smalless initial 0}
\begin{cases}\,&1+|Dl_{z_0,\ \rho}|\leq M_0,\\
\,&\Psi(z_0,\rho,l_{z_0,\ \rho})\leq\epsilon_*,\\
\,&\Phi(z_0,\rho,l_{z_0,\ \rho})\leq\frac{1}{16},
\\\,&v(\rho)+\omega_p(\rho)\log\Big(\frac{1}{\rho}\Big)\leq \epsilon_*
	\end{cases}
\end{equation}
and $\rho<\rho_0$ where $\rho_0>0$ is the radius in
Lemma \ref{importanthigher}.
\end{proposition}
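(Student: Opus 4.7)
The plan is to combine the characterisation of the regular set $\Omega_T\setminus(\Sigma_1\cup\Sigma_2)$ with the higher integrability from Section~3 and the Poincar\'e inequality of Lemma~\ref{poincare lemma}. The first step is to fix $M_0$. Since $\mathfrak{z}_0\notin\Sigma_2$, there exist $K>0$ and $\rho_1>0$ with $(|Du|^{p(\cdot)})_{Q_\rho(\mathfrak{z}_0)}\le K$ for every $\rho\le\rho_1$. For any $z_0\in Q_{\rho/8}(\mathfrak{z}_0)$ with $\rho\le\rho_1/4$, the cylinder inclusion $Q_{2\rho}(z_0)\subset Q_{4\rho}(\mathfrak{z}_0)$ transfers this bound (up to the volume factor $2^{n+2}$) to $Q_{2\rho}(z_0)$. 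Combined with H\"older's inequality (using $p(z)\ge 2$) and the standard estimate $|Dl_{z_0,\rho}|\le c(n)\dashint_{Q_\rho(z_0)}|Du|\,\dd z$ for the $L^2$-minimising affine map, this fixes an $M_0$ that does not depend on $\rho$ or $z_0$, yielding (1).

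Since $\mathfrak{z}_0\notin\Sigma_1$, I next select a sequence $\rho_k\downarrow 0$ along which $\dashint_{Q_{\rho_k}(\mathfrak{z}_0)}|Du-(Du)_{Q_{\rho_k}(\mathfrak{z}_0)}|\,\dd z\to 0$. For any $z_0\in Q_{\rho_k/8}(\mathfrak{z}_0)$, the triangle inequality together with the inclusion $Q_{2\rho_k}(z_0)\subset Q_{4\rho_k}(\mathfrak{z}_0)$ yields uniform $L^1$-smallness of $Du-(Du)_{Q_r(z_0)}$ on $Q_r(z_0)$ for both $r=\rho_k$ and $r=2\rho_k$; the same smallness then holds for $Du-Dl_{z_0,r}$, since $|Dl_{z_0,r}-(Du)_{Q_r(z_0)}|$ is controlled by this $L^1$-oscillation. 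To lift this $L^1$ smallness to the $L^2$ and $L^{p_0}$ smallness required by $\Phi$, I interpolate against the uniform higher integrability bound $\int_{Q_{\rho_1}(\mathfrak{z}_0)}|Du|^{p(\cdot)(1+\epsilon_0)}\,\dd z\le C$ from Proposition~\ref{higher integrability0}. Dividing by $(1+|Dl_{z_0,r}|)^{p_0}\ge 1$ and invoking (1) shows that $\Phi(z_0,r,l_{z_0,r})\le 1/16$ for $k$ large and both $r\in\{\rho_k,2\rho_k\}$, uniformly in $z_0$.

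Condition (4) follows for $\rho_k$ small from \eqref{VMO} and \eqref{omegap}. Taking $r=\rho_k$ in the previous step already provides (3), while taking $r=2\rho_k$ supplies the hypothesis of Lemma~\ref{poincare lemma}. Feeding the smallness of $\dashint|Du-Dl_{z_0,\rho_k}|^{p_0}\,\dd z$ and of $v(\rho_k)+\omega_p(\rho_k)\log(1/\rho_k)$ into \eqref{Poincare2}, and noting that the coefficients depend only on $M_0$ and the structural data, one obtains $\Psi(z_0,\rho_k,l_{z_0,\rho_k})\to 0$ uniformly in $z_0$, which gives (2). Finally I choose $k$ large enough that $\rho_k<\rho_0$ and that each left-hand side in (1)--(4) falls below the prescribed $\epsilon_*$; the resulting $\rho=\rho_k$ is the one in the statement.

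The main obstacle is the uniformity over $z_0\in Q_{\rho/8}(\mathfrak{z}_0)$: the non-singularity conditions concern only the single point $\mathfrak{z}_0$, whereas the conclusions must hold at every $z_0$ in a cylinder around it. Restricting to the generously smaller $Q_{\rho/8}(\mathfrak{z}_0)$ is precisely what makes the cylinder inclusion available with only a universal volume loss, which transports both the $L^{p(\cdot)}$-bound and the $L^1$-oscillation smallness. A secondary technical point is the mismatch between the $L^1$ characterisation of $\Sigma_1$ and the $L^2$ and $L^{p_0}$ deviations in $\Phi$; this is bridged by interpolation based on Proposition~\ref{higher integrability0}, without which one could not convert $L^1$-smallness into $L^{p_0}$-smallness while preserving the limiting behaviour as $k\to\infty$.
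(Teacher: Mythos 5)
Your proposal is correct and follows essentially the same strategy the paper uses: extract a sequence of radii from the non-degeneracy of $\mathfrak z_0$, transfer the $L^{p(\cdot)}$-bound and $L^1$-oscillation smallness to nearby cylinders by inclusion, interpolate against the higher-integrability bound from Proposition~\ref{higher integrability0} to upgrade the $L^1$-smallness of the gradient oscillation to $L^{p_0}$-smallness, and then feed this into the Poincar\'e inequality of Lemma~\ref{poincare lemma}. The one real departure is in step~(1): the paper bounds $\dashint_{Q_\rho}|u-(u)_{Q_\rho}|/\rho$ via the gluing/time-oscillation estimate of \cite[Lemma~5]{BFM}, whereas you use the identity $\dashint_{B_\rho}(x-x_0)\,\dd x=0$ to replace $(u)_{Q_\rho}$ by the time-sliced spatial mean in the formula for $Dl_{z_0,\rho}$ and then apply the spatial Poincar\'e slicewise, yielding $|Dl_{z_0,\rho}|\le c(n)\dashint_{Q_\rho}|Du|\,\dd z$ and the corresponding bound on $|Dl_{z_0,\rho}-(Du)_{Q_\rho}|$ without invoking the equation; this is a legitimate simplification of that particular step.
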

\begin{proof}We first observe from the definition of $\Sigma_1$ and $\Sigma_2$ that
for $\mathfrak z_0\in\Omega_T\setminus(\Sigma_1\cup\Sigma_2)
$, there exist a constant $M_*>1$ and a sequence $\{\rho_i\}_{i=1}^\infty$ with $\rho_i\downarrow 0$ such that
 \begin{equation*}\begin{split}\lim_{i\to\infty}\ \dashint_{Q_{\rho_i}(\mathfrak z_0)}
  |Du-(Du)_{Q_{\rho_i}(\mathfrak z_0)}|\,\mathrm {d}z=0
\qquad\text{and}\qquad \limsup\limits_{i\to\infty}\left(|Du|^{p(\cdot)}\right)_{Q_{\rho_i}(\mathfrak z_0)}\leq M_*.\end{split}\end{equation*}
From this, we conclude that for any fixed $0<\epsilon_*<1$, then there exists a radius $\hat\rho>0$ such that
for any $\rho_j<\hat\rho$ there holds
  \begin{equation*}\begin{split}\dashint_{Q_{\rho_j}(\mathfrak z_0)}
  |Du-(Du)_{Q_{\rho_j}(\mathfrak z_0)}|\,\mathrm {d}z<2^{-n-3}\epsilon_*^{\frac{\gamma_2(1+\delta)-1}{\delta}}\qquad\text{and}\qquad
\dashint_{Q_{\rho_j}(\mathfrak z_0)}|Du|^{p(\cdot)}\,\mathrm{d}z\leq 2 M_*,\end{split}\end{equation*}
where
$\delta=\frac{1}{4}\epsilon_0$ and $\epsilon_0$ is the exponent defined in Proposition \ref{higher integrability0}.
At this stage, let $\rho=\rho_j/8<\hat\rho/8$ be a fixed radius which will be determined later.
For any fixed point $z_0\in Q_{\rho/8}(\mathfrak z_0)$, we infer that
 \begin{equation}\begin{split}\label{small1}\dashint_{Q_{4\rho}(z_0)}
  |Du-(Du)_{Q_{4\rho}(z_0)}|\,\mathrm {d}z\leq 2\ \dashint_{Q_{4\rho}(z_0)}
  |Du-(Du)_{Q_{\rho_j}(\mathfrak z_0)}|\,\mathrm {d}z\leq \epsilon_*^{\frac{\gamma_2(1+\delta)-1}{\delta}}
\end{split}\end{equation}
and
 \begin{equation}\begin{split}\label{small2}
\dashint_{Q_{4\rho}(z_0)}|Du|^{p(\cdot)}\,\mathrm{d}z\leq 2^{n+2}\ \dashint_{Q_{\rho_j}(\mathfrak z_0)}|Du|^{p(\cdot)}\,\mathrm{d}z\leq
2^{n+3}M_*.
\end{split}\end{equation}
We first consider the proof of \eqref{smalless initial 0}$_1$. To this end,
we choose a radial function $\psi\in C_0^\infty(B_\rho(x_0))$
with $|D\psi|\leq c\rho^{-1}$. Moreover, we define $u^{\psi}(t)=\ \dashint_{B_\rho}u(x,t)\psi(x)\,\mathrm{d}x$
and apply the Poincar\'e inequality slicewise to obtain
 \begin{equation*}\begin{split}
 \dashint_{Q_\rho(z_0)}\Big|\frac{u-(u)_{Q_\rho(z_0)}}{\rho}\Big|\,\mathrm{d}z\leq c\ \dashint_{Q_\rho(z_0)}
 |Du|\,\mathrm{d}z+c\rho^{-1}
 \sup_{t,\tau\in(t_0-\rho^2,t_0)}|\tilde u^\psi(t)-\tilde u^\psi(\tau)|.
 \end{split}\end{equation*}
Once again, we define
$p_1=\inf _{Q_{8\rho}(\mathfrak z_0)}p(z)$ and $p_2=\sup _{Q_{8\rho}(\mathfrak z_0)}p(z)$.
We now follow the proof of \cite[Lemma 5]{BFM} to deduce that for a.e. $t,\tau\in (t_0-\rho^2,t_0)$ there holds
  \begin{equation*}\begin{split}
  |u^\psi(t)-u^\psi(\tau)|&\leq \sqrt{L}\|D\psi\|_{L^\infty}\rho^2
 \ \dashint_{Q_{\rho}(z_0)}(1+|Du|)^{p(z)-1}\,\mathrm{d}z
 \\&\leq c\rho  \ \dashint_{Q_{4\rho}(z_0)}(1+|Du|)^{p_2-1}\,\mathrm{d}z
  \\&\leq c\rho  \ \dashint_{Q_{4\rho}(z_0)}(1+|Du|)^{p(z)}\,\mathrm{d}z\leq c\rho M_*,
  \end{split}\end{equation*}
  since $p_2-p_1\leq\omega_p(4\rho)\leq 1$.
Consequently, we conclude that there exists a constant $\hat c$ depending only upon
$n$, $N$, $\nu$, $L$ and $\gamma_2$ such that the inequality
   \begin{equation*}\begin{split}
\dashint_{Q_\rho(z_0)}\Big|\frac{u-(u)_{Q_\rho(z_0)}}{\rho}\Big|\,\mathrm{d}z\leq \hat c+\hat cM_*+\hat c
\ \dashint_{Q_{4\rho}(z_0)}|Du|^{p(z)}\,\mathrm{d}z\leq 3\hat cM_*
     \end{split}\end{equation*}
holds.
Recalling that $l_{z_0,\ \rho}$ is the unique affine function minimizing $l\to\ \dashint_{Q_\rho(z_0)}|u-l|^2\,\mathrm{d}z$, we
apply \cite[(2.1)]{BFM} to deduce that
  \begin{equation*}\begin{split}
  |Dl_{z_0,\ \rho}|&=\frac{n+2}{\rho^2}\Big|\ \dashint_{Q_\rho(z_0)}(u-(u)_{Q_\rho(z_0)})\otimes(x-x_0)\,\mathrm{d}z\Big|
  \\&\leq (n+2)\
\dashint_{Q_\rho(z_0)}\Big|\frac{u-(u)_{Q_\rho(z_0)}}{\rho}\Big|\,\mathrm{d}z\leq 3(n+2)\hat cM_*.
    \end{split}\end{equation*}
This proves \eqref{smalless initial 0}$_1$ with the choice $M_0=(1+3(n+2)\hat c)M_*$.
To prove \eqref{smalless initial 0}$_2$,
the aim is to determine the upper bound of the constant $\epsilon_*<\hat\epsilon$ and the radius $\rho$.
We first claim that for $\epsilon_*$ and $\rho$ sufficiently small there holds
    \begin{equation}\begin{split}\label{claim1/16}
    \Phi(z_0, 2\rho,l)\leq\frac{1}{16},
      \end{split}\end{equation}
where $l=(u)_{Q_{\rho}(z_0)}+(Du)_{Q_{2\rho}(z_0)}(x-x_0)$. Initially, we set $\rho<\rho_0$, where $\rho_0>0$ is the radius in
Lemma \ref{importanthigher}. Note
that this choice can be justified by the limitation $\rho_i\downarrow 0$.
For the proof of \eqref{claim1/16}, we apply the interpolation inequality to deduce
\begin{equation}\begin{split}\label{interpolation}
&\Big(\ \dashint_{Q_{2\rho}(z_0)}|Du-(Du)_{Q_{2\rho}(z_0)}|^{p_0}\,\mathrm{d}z\Big)^{\frac{1}{p_0}}
\\&\leq\ \Big(\ \dashint_{Q_{2\rho}(z_0)}|Du-(Du)_{Q_{2\rho}(z_0)}|^{p_0(1+\delta)}\,\mathrm{d}z\Big)
^{\frac{1}{p_0(1+\delta)}\Theta}
\Big(\ \dashint_{Q_{2\rho}(z_0)}|Du-(Du)_{Q_{2\rho}(z_0)}|\,\mathrm{d}z\Big)^{1-\Theta}
\end{split}\end{equation}
where $p_0=p(z_0)$, $\delta=\frac{1}{4}\epsilon_0$ and the factor $\Theta$ can be bounded above by a constant $\hat \Theta$
depending only upon $\epsilon_0$ and $\gamma_2$,
\begin{equation*}\Theta=\frac{(p_0-1)(1+\delta)}{p_0(1+\delta)-1}\leq \frac{(\gamma_2-1)(1+\delta
)}{\gamma_2(1+\delta)-1}=:\hat\Theta.\end{equation*}
Since $\rho<\rho_0$, we observe from \eqref{rho1} and \eqref{rho0} that $p_2-p_1\leq \epsilon_0\leq \frac{1}{2}p_1\epsilon_0$.
Using this together with \eqref{higher integrability} and \eqref{small2} we obtain
\begin{equation}\begin{split}\label{higher bound}
\dashint_{Q_{2\rho}(z_0)}|Du|^{p_0(1+\delta)}\,\mathrm{d}z&\leq\
\dashint_{Q_{2\rho}(z_0)}(1+|Du|)^{p(z)(1+\epsilon_0)}\,\mathrm{d}z
\\&\leq c
   \left[\Big(\ \,\dashint_{Q_{4\rho}(z_0)}|Du|^{p(z)}\,\mathrm {d}z\Big)^{1+\frac{1}{2}\epsilon_0 p_0}+1\right]\leq c(M_0).
\end{split}\end{equation}
Consequently, we infer from \eqref{small1}, \eqref{small2}, \eqref{interpolation} and \eqref{higher bound} that
there exists a constant $\hat c_1\geq 1$ depending only upon $n$, $N$, $\nu$, $L$, $\gamma_2$ and $M_0$ such that
the inequality
\begin{equation*}\begin{split}
&\Big(\ \dashint_{Q_{2\rho}(z_0)}|Du-(Du)_{Q_{2\rho}(z_0)}|^{p_0}\,\mathrm{d}z\Big)^{\frac{1}{p_0}}
\leq c(\gamma_2,M_0)^{\hat\Theta}
\Big(\ \dashint_{Q_{4\rho}(z_0)}|Du-(Du)_{Q_{4\rho}(z_0)}|\,\mathrm{d}z\Big)^{1-\hat\Theta}
\leq \hat c_1\epsilon_*
\end{split}\end{equation*}
holds.
This estimate together with the H\"older's inequality ensures us that
 \begin{equation*}\begin{split}\Phi(z_0, 2\rho,l)&=\dashint_{Q_{2\rho}(z_0)}
 \left(\frac{|Du-Dl|}{1+|Dl|}\right)^2+\left(\frac{|Du-Dl|}{1+|Dl|}\right)^{p_0}\,\mathrm {d}z
 \\&\leq 2\Big(\ \dashint_{Q_{2\rho}(z_0)}|Du-(Du)_{Q_{2\rho}(z_0)}|^{p_0}\,\mathrm{d}z\Big)^{\frac{2}{p_0}}
\leq 2\hat c_1^2\epsilon_*^2,\end{split}\end{equation*}
since $p_0\geq 2$.
 At this point, we choose $\hat \epsilon\leq \frac{1}{32}\hat c_1^{-2}$,
which proves the claim \eqref{claim1/16} for $\epsilon_*\leq \hat \epsilon$ and $\rho<\rho_0$.
In view of \eqref{claim1/16} and $\rho<\rho_0$, we can
apply Lemma \ref{poincare lemma} with $l=(u)_{Q_{\rho}(z_0)}+(Du)_{Q_{2\rho}(z_0)}(x-x_0)$ to obtain
\begin{equation*}\begin{split}\label{poincare3}
\dashint_{Q_\rho(z_0)}&
\Big|\frac{u-(u)_{Q_\rho(z_0)}-(Du)_{Q_{2\rho}(z_0)} (x-x_0)}{\rho}\Big|^2\,\mathrm {d}z
 +\ \dashint_{Q_\rho(z_0)}
\Big|\frac{u-(u)_{Q_\rho(z_0)}-(Du)_{Q_{2\rho}(z_0)} (x-x_0)}{\rho}\Big|^{p_0}\,\mathrm {d}z
\\&\leq c(1+|(Du)_{Q_{2\rho}(z_0)}|)^{p_0(p_0-2)}\Big(\ \dashint_{Q_{\rho}}|Du-(Du)_{Q_{2\rho}(z_0)}|^{p_0}\,\mathrm {d}z
\Big)^{\frac{2}{p_0}}
\\&\quad +c(1+|(Du)_{Q_{2\rho}(z_0)}|)^{p_0(p_0-1)}\left[v^{\frac{2(p_0-1)}{p_0}}
(\rho)+\Big(\omega_p(\rho)\log\Big(\frac{1}{\rho}\Big)\Big)
^{\frac{2(p_0-1)}{p_0}}\right]
\\&\leq c(n,N,\gamma_2,M_0)\epsilon_*^2+c(\gamma_2,M_0)
\left[v^{\frac{2}{\gamma_2}}
(\rho)+\Big(\omega_p(\rho)\log\Big(\frac{1}{\rho}\Big)\Big)
^{\frac{2}{\gamma_2}}\right].
\end{split}\end{equation*}
To proceed further, we apply the quasi-minimality of the affine function $l_{z_0,\ \rho}$
from \cite[Lemma 2.2]{DH}
and conclude that there exists a constant
$\hat c_2\geq 1$ depending only upon $n$, $N$, $\nu$, $L$, $\gamma_2$ and $M_0$ such that
the inequality
\begin{equation*}\begin{split}
\Psi(z_0, \rho,l_{z_0,\ \rho})&=\ \dashint_{Q_\rho(z_0)}
\Big|\frac{u-l_{z_0,\ \rho}}{\rho(1+|Dl_{z_0,\ \rho}|)}\Big|^2\,\mathrm {d}z+\ \dashint_{Q_\rho(z_0)}
\Big|\frac{u-l_{z_0,\ \rho}}{\rho(1+|Dl_{z_0,\ \rho}|)}\Big|^{p_0}\,\mathrm {d}z
\\&\leq c\ \dashint_{Q_\rho(z_0)}
\Big|\frac{u-(u)_{Q_\rho(z_0)}-(Du)_{Q_{2\rho}(z_0)}\cdot (x-x_0)}{\rho}\Big|^2\,\mathrm {d}z\\&+c\ \dashint_{Q_\rho(z_0)}
\Big|\frac{u-(u)_{Q_\rho(z_0)}-(Du)_{Q_{2\rho}(z_0)}\cdot (x-x_0)}{\rho}\Big|^{p_0}\,\mathrm {d}z
\\&\leq \hat c_2\epsilon_*^2+\hat c_2
\left[v^{\frac{2}{\gamma_2}}
(\rho)+\Big(\omega_p(\rho)\log\Big(\frac{1}{\rho}\Big)\Big)
^{\frac{2}{\gamma_2}}\right]
\end{split}\end{equation*}
holds. At this stage, we choose $\hat\epsilon>0$ such that $\hat\epsilon<\frac{1}{2}\hat c_2^{-1}$. Moreover,
for a fixed $\epsilon_*<\hat\epsilon$, we choose $\rho<\rho_0$ small enough to have
  \begin{equation}\begin{split}\label{radius}
\hat c_2
\left[v^{\frac{2}{\gamma_2}}
(\rho)+\Big(\omega_p(\rho)\log\Big(\frac{1}{\rho}\Big)\Big)
^{\frac{2}{\gamma_2}}\right]<\frac{1}{2}\epsilon_*,
\end{split}\end{equation}
since $\rho=\rho_j$ for some $j\in\mathbb{N}$ and $\rho_j\downarrow 0$.
This establishes the desired inequality \eqref{smalless initial 0}$_2$ and \eqref{smalless initial 0}$_4$.
Finally, we come to the proof
of \eqref{smalless initial 0}$_3$. Applying \cite[Lemma 2.5]{M} with $\lambda=1$, $A_{z_0,\ \rho}^{(1)}=Dl_{z_0,\ \rho}$,
$\xi=(u)_{Q_\rho(z_0)}$ and $w=(Du)_{Q_{2\rho}(z_0)}$, we obtain
  \begin{equation*}\begin{split}
  |(Du)_{Q_{2\rho}(z_0)}-Dl_{z_0,\ \rho}|^2&\leq\frac{n(n+2)}{\rho^2}\ \dashint_{Q_{\rho}(z_0)}
  \Big|u-(u)_{Q_\rho(z_0)}-(Du)_{Q_{2\rho}(z_0)}\cdot (x-x_0)\Big|^2\,\mathrm{d}z
\leq c\epsilon_*.
   \end{split}\end{equation*}
   Therefore, we conclude that there exists a constant $\hat c_3\geq 1$ depending only upon $n$, $N$, $\nu$, $L$, $\gamma_2$ and $M_0$ such that
the inequality
 \begin{equation*}\begin{split}
 \Phi(z_0, \rho,l_{z_0,\ \rho})&=\dashint_{Q_{\rho}(z_0)}
 \left(\frac{|Du-Dl_{z_0,\ \rho}|}{1+|Dl_{z_0,\ \rho}|}\right)^2+\left(\frac{|Du-Dl_{z_0,\ \rho}|}
 {1+|Dl_{z_0,\ \rho}|}\right)^{p_0}\,\mathrm {d}z
 \\&\leq c\Big(\ \dashint_{Q_{2\rho}(z_0)}|Du-(Du)_{Q_{2\rho}(z_0)}|^{p_0}\,\mathrm{d}z\Big)^{\frac{2}{p_0}}
 +c|(Du)_{Q_{2\rho}(z_0)}-Dl_{z_0,\ \rho}|^2
\\&\qquad\qquad+c|(Du)_{Q_{2\rho}(z_0)}-Dl_{z_0,\ \rho}|^{p_0}
 \\&
\leq \hat c_3\epsilon_*\leq \hat c_3\hat \epsilon
 \end{split}\end{equation*}
holds.
 At this point, we choose $\hat\epsilon>0$ small enough to have $\hat \epsilon<\frac{1}{16}\hat c_3^{-1}$. This proves
\eqref{smalless initial 0}$_3$. In conclusion, we have proved \eqref{smalless initial 0} with the choice
 \begin{equation*}\hat \epsilon=\min\Big\{\frac{1}{32}\hat c_1^{-2},\frac{1}{2}\hat c_2^{-1},
\frac{1}{16}\hat c_3^{-1}\Big\},\end{equation*}
while for any fixed $\epsilon_*<\hat\epsilon$, the radius $\rho<\min\{\frac{1}{8}
\hat\rho,\rho_0\}$ can be determined via \eqref{radius}. The proof of the proposition
is now complete.
\end{proof}
\section{Caccioppoli inequality}
The aim of this section is to establish Caccioppoli type estimate for the weak solution to the parabolic system \eqref{parabolic}.
In the context of the problem for discontinuous coefficients, the energy estimate should be established
on non-uniformly parabolic cylinders and the bounds should be independent of $|Dl|$.
The next lemma is our main result in this section.
\begin{lemma}
Let $z_0\in\Omega_T$ and let $l$ be an affine function. Let $\rho_0>0$ be the radius in Lemma \ref{importanthigher}.
Assume that $\rho<\rho_0$, $\lambda\geq1$ and $\lambda\leq 1+|Dl|\leq M\lambda$ for some $M\geq1$.
Moreover, suppose that $\Phi^{(\lambda)}(z_0,\rho,l)\leq\frac{1}{16}$. Then,
there exists a constant $c$ depending only upon $n$, $N$, $E$, $\gamma_2$, $\nu$, $L$ and $M$
such that the inequality
\begin{equation}\begin{split}\label{Cac}\sup_{t_0-\lambda^{2-p_0}\frac{1}{16}\rho^2<t<t_0}&\
\dashint_{B_{\rho/4}(x_0)}\frac{|u(\cdot,t)-l|^2}{\lambda^{2-p_0}\rho^2}
\,\mathrm{d}x+\ \dashint_{Q^{(\lambda)}_{\rho/4}(z_0)}(1+|Dl|+|Du|)^{p_0-2}|Du-Dl|^2\,\mathrm{d}z
\\&\leq c(1+|Dl|)^{p_0}\Big(\ \dashint_{Q_\rho^{(\lambda)}(z_0)}
\Big|\frac{u-l}{\rho(1+|Dl|)}\Big|^2\,\mathrm {d}z+\ \dashint_{Q_\rho^{(\lambda)}(z_0)}
\Big|\frac{u-l}{\rho(1+|Dl|)}\Big|^{p_0}\,\mathrm {d}z\Big)
\\&\quad+c(1+|Dl|)^{p_0}\Big(v(\rho)+\Big(\omega_p(\rho)\log\Big(\frac{1}{\rho}\Big)\Big)^2\Big)\end{split}\end{equation}
holds, where $p_0=p(z_0)$.
\end{lemma}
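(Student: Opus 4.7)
The plan is to derive \eqref{Cac} by testing the weak formulation of \eqref{parabolic} against a product cut-off of $u-l$ adapted to the intrinsic scaled cylinder $Q_\rho^{(\lambda)}(z_0)$, following the classical Caccioppoli scheme. Concretely, I would take $\varphi=\eta^{2}\zeta(u-l)$, with $\eta\in C_0^\infty(B_{\rho/2}(x_0))$ equal to $1$ on $B_{\rho/4}(x_0)$ and $|D\eta|\le c\rho^{-1}$, and $\zeta$ a Lipschitz time cut-off vanishing at $t_0-\lambda^{2-p_0}\rho^{2}/4$ and equal to $1$ on $(t_0-\lambda^{2-p_0}\rho^{2}/16,t_0)$. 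Because $u$ is only weakly differentiable in time, I would first regularise by Steklov averages, test the smoothed identity, and pass to the limit. Using $\partial_t l=0$, the time term produces both the $L^\infty$-in-time supremum on the left of \eqref{Cac} via $\partial_t u\cdot(u-l)=\tfrac12\partial_t|u-l|^{2}$ and a remainder of the form $\lambda^{p_0-2}\rho^{-2}\ \dashint|u-l|^{2}\diff z$ that feeds into the right-hand side.

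For the divergence part, using $D\varphi=2\eta\zeta\,D\eta\otimes(u-l)+\eta^{2}\zeta(Du-Dl)$, the central manipulation is to split
\begin{equation*}
\mu(z)\langle A(z,Du),Du-Dl\rangle=\mu(z)\langle A(z,Du)-A(z,Dl),Du-Dl\rangle+\mu(z)\langle A(z,Dl),Du-Dl\rangle.
\end{equation*}
The first summand, via the standard monotonicity consequence of \eqref{A}$_1$ (integrate $\partial_w A$ along the segment from $Dl$ to $Du$), is bounded below by $c\nu(1+|Dl|+|Du|)^{p(z)-2}|Du-Dl|^{2}$; trading $p(z)$ for $p_0$ through \eqref{exp p2-p1} and Lemma \ref{lambda} (3) then recovers the gradient integral on the left of \eqref{Cac}. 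The cross term $\int\mu(z)\eta\zeta\langle A(z,Du),D\eta\otimes(u-l)\rangle\diff z$ is controlled by the growth bound \eqref{A}$_2$ and Young's inequality in the $V$-function formulation, producing the $\ \dashint|u-l|^{p_0}$ and $\ \dashint|u-l|^{2}$ contributions on the right-hand side and a piece that can be reabsorbed into the principal monotone term.

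The main obstacle is the treatment of the error created by freezing the coefficient $\mu$ and the vector field $A$ at $z_0$. I would write
\begin{equation*}
\mu(z)A(z,Dl)=[\mu(z)-(\mu)_{Q}]A(z,Dl)+(\mu)_{Q}[A(z,Dl)-A(z_0,Dl)]+(\mu)_{Q}A(z_0,Dl),
\end{equation*}
with $Q=Q_\rho^{(\lambda)}(z_0)$; the last, $z$-independent piece paired with $\eta^{2}\zeta(Du-Dl)$ integrates by parts onto $D(\eta^{2}\zeta)\otimes(u-l)$ only (since $\operatorname{div}$ of a constant vector field vanishes) and is absorbed via Young's inequality by the left-hand side. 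The VMO-error is estimated by H\"older's inequality with the higher-integrability bound \eqref{higher integrability1}, using \eqref{A}$_2$ to control $|A(z,Dl)|$ by $c(1+|Dl|)^{p(z)-1}$ and Lemma \ref{lambda} (3) to pass to the frozen exponent; this yields the $v(\rho)$ contribution in \eqref{Cac}. The $z$-continuity error is the delicate one: \eqref{Az1z2} produces a factor $\omega_p(\rho)(1+|Dl|+|Du|)^{p(z)-1}[1+\log(1+|Dl|+|Du|)]$, and after applying the logarithmic higher-integrability estimate \eqref{higher integrability2} of Lemma \ref{importanthigher} with $\gamma=1$, Lemma \ref{lambda} (4), and Young's inequality with exponents $(2,2)$, this collapses to the quadratic $(1+|Dl|)^{p_0}(\omega_p(\rho)\log(1/\rho))^{2}$ term on the right of \eqref{Cac}.

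The pervasive bookkeeping difficulty is the systematic conversion between the varying exponent $p(z)$ in the structural assumptions \eqref{A}--\eqref{DAz1z2} and the frozen exponent $p_0$ appearing in the target inequality. This is handled throughout by Lemma \ref{lambda} (3)--(4) together with \eqref{exp p2-p1}, which bound $\lambda^{p_2-p_1}$, $(1+|Dl|)^{p_2-p_1}$ and $\rho^{-(p_2-p_1)}$ by absolute constants depending only on the data, and by Lemma \ref{importanthigher}, which supplies exactly the $L^{p}\log^\gamma L$ integrability needed to tame the log-factor from \eqref{Az1z2} while keeping the prefactor $(1+|Dl|)^{p_0}$ on the right of \eqref{Cac}.
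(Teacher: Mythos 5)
Your overall skeleton --- test the weak formulation against a product cut-off times $u-l$, pull out the time supremum, split the elliptic part into a monotone piece, a cross piece, a VMO-error $[\mu-(\mu)_Q]A(z,Dl)$, a $z$-continuity error $(\mu)_Q[A(z,Dl)-A(z_0,Dl)]$, and a constant piece $(\mu)_QA(z_0,Dl)$ that integrates away --- is the same as the paper's, and replacing the paper's $\phi$-cutoff-plus-iteration-lemma scheme by an $\eta^2$-cutoff with direct absorption is a legitimate tactical alternative. However, there is a substantive gap.

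You claim that the lower bound $c\nu(1+|Dl|+|Du|)^{p(z)-2}|Du-Dl|^2$ coming from the monotonicity of $\partial_wA$ can be converted to $(1+|Dl|+|Du|)^{p_0-2}|Du-Dl|^2$ ``through \eqref{exp p2-p1} and Lemma~\ref{lambda}~(3).'' This cannot work. Those facts bound $\rho^{-(p_2-p_1)}$, $\lambda^{p_2-p_1}$ and $(1+|Dl|)^{p_2-p_1}$, i.e.\ quantities involving only the \emph{controlled} variables $\rho$, $\lambda$, $Dl$. The quantity $(1+|Dl|+|Du|)^{p(z)-p_0}$ involves the \emph{unbounded} $|Du|$, and since $p(z)$ can be strictly below $p_0$ on part of the cylinder, there is no pointwise inequality $(1+|Dl|+|Du|)^{p(z)-2}\ge(1+|Dl|+|Du|)^{p_0-2}$. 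The correct treatment (the paper's terms $\uppercase\expandafter{\romannumeral1}_2$, $\uppercase\expandafter{\romannumeral1}_3$, $\uppercase\expandafter{\romannumeral1}_4$) is to subtract and add $(1+|Dl|+|Du|)^{p_0-2}$, bound the difference via the mean value theorem for $t\mapsto a^t$ by $\omega_p(\rho)(1+|Dl|+|Du|)^{2p_2-p_0-2}\log(1+|Dl|+|Du|)|Du-Dl|^2$, Young this, and then invoke Lemma~\ref{importanthigher} with $\gamma=2$. The same mechanism is needed for the cross term, where $A(z,Du)-A(z,Dl)$ carries growth exponent $p_2-2$, not $p_0-2$. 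Consequently, the $(1+|Dl|)^{p_0}\big(\omega_p(\rho)\log(1/\rho)\big)^2$ term in \eqref{Cac} accumulates from the monotone term, the cross term \emph{and} the $A(z,Dl)$-continuity error; your proposal attributes it solely to \eqref{Az1z2}, which leaves the first two conversions unaccounted for.

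Two further, more minor, points. First, for the VMO error you invoke the higher-integrability estimate \eqref{higher integrability1}; this is not needed because $A(z,Dl)$ involves only the affine gradient $Dl$, and the uniform bound $\sqrt\nu\le\mu\le\sqrt L$ already gives $\dashint_{Q}|\mu-(\mu)_Q|^2\,\mathrm{d}z\le c\,v(\rho)$, which suffices after Young. Second, you estimate the cross term using the growth bound on $A(z,Du)$ directly; since $A(z,Du)$ carries no factor of $|Du-Dl|$, absorption into the monotone term is not automatic. The cleaner route (as in the paper) is to pair the spatial cut-off term $D\eta\otimes(u-l)$ with $A(z,Du)-A(z,Dl)$, which does carry a $|Du-Dl|$ factor by the fundamental theorem of calculus; otherwise you must first split $(1+|Du|)^{p(z)-1}\le c\big[(1+|Dl|)^{p(z)-1}+|Du-Dl|^{p(z)-1}\big]$ and again confront the $p(z)$-to-$p_0$ conversion.
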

\begin{proof}
There is no loss of generality in assuming $z_0=0$.
Once again, we set
$p_1=\inf _{Q_{\rho}^{(\lambda)}}p(z)$ and $p_2=\sup _{Q_{\rho}^{(\lambda)}}p(z)$.
For fixed $\frac{1}{4}\rho<s_1<s_2<\frac{1}{2}\rho$
we consider the concentric parabolic cylinders $Q_{\frac{1}{4}\rho}^{(\lambda)}\subset Q_{s_1}^{(\lambda)}
\subset Q_{s_2}^{(\lambda)}\subset Q_{\frac{1}{2}\rho}^{(\lambda)}\subset Q_{\rho}^{(\lambda)}$.
We choose a cut-off function $\phi=\phi(x)\in C_0^\infty(B_{s_2})$ with $\phi\equiv 1$ on $B_{s_1}$, $0\leq\phi\leq1$
and $|D\phi|\leq c(s_2-s_1)^{-1}$. Moreover, we define a Lipschitz function $\zeta_\epsilon\in W_0^{1,\infty}(\Lambda_\rho^{(\lambda)}
,[0,1])$ via
 \begin{equation}\label{zetaCac}
	\ \zeta_\epsilon(t)=\begin{cases}0,\quad &\text{on}
\ (-\lambda^{2-p_0}\rho^2,\ -\lambda^{2-p_0}s_2^2]\\
	\ \frac{1}{\lambda^{2-p_0}(s_2^2-s_1^2)}(t+\lambda^{2-p_0}s_2^2),\quad &\text{on}\ (-\lambda^{2-p_0}s_2^2,\
-\lambda^{2-p_0}s_1^2]\\
	\ 1,\quad &\text{on}\ (-\lambda^{2-p_0}s_1^2,\
\tilde t]\\
\ \frac{1}{\epsilon}(\tilde t+\epsilon-t),\quad &\text{on}\ (\tilde t,\
\tilde t+\epsilon]\\
\ 0,&\text{on}\ (
\tilde t+\epsilon,\ 0)
	\end{cases}
\end{equation}
for a fixed $\tilde t\in\Lambda_{s_1}^{(\lambda)}$ and $\epsilon\in (0,|\tilde t|)$. In the weak formulation \eqref{weaksolution} we
choose the test function $\varphi(x,t)=\phi\zeta_\epsilon(u-l)$. We first observe that
\begin{equation*}\int_{Q_\rho^{(\lambda)}}l\cdot\partial_t\varphi\,\mathrm{d}z=0\qquad\text{and}\qquad
\int_{Q_\rho^{(\lambda)}}(\mu)_{Q_\rho^{(\lambda)}}\big\langle A(0,Dl), D\varphi\big\rangle\,\mathrm{d}z=0.\end{equation*}
This leads us to
\begin{equation}\begin{split}\label{I-V}
\uppercase\expandafter{\romannumeral1}:&=\int_{Q_\rho^{(\lambda)}}\phi(x)\zeta
_\epsilon(t)\mu(z)\big\langle A(z,Du)- A(z,Dl), Du-Dl\big\rangle\,\mathrm{d}z
\\&=-\int_{Q_\rho^{(\lambda)}}\zeta_\epsilon(t)\mu(z)\big\langle A(z,Du)- A(z,Dl), (u-l)\otimes D\phi\big\rangle\,\mathrm{d}z
\\&\quad-\int_{Q_\rho^{(\lambda)}}[\mu(z)-(\mu)_{Q_\rho^{(\lambda)}}]\big\langle A(z,Dl), D\varphi\big\rangle\,\mathrm{d}z
\\&\quad-\int_{Q_\rho^{(\lambda)}}(\mu)_{Q_\rho^{(\lambda)}}\big\langle A(z,Dl)- A(0,Dl), D\varphi\big\rangle\,\mathrm{d}z
\\&\quad+\int_{Q_\rho^{(\lambda)}}(u-l)\cdot\partial_t\varphi\,\mathrm{d}z=:\uppercase\expandafter{\romannumeral2}+
\uppercase\expandafter{\romannumeral3}+
\uppercase\expandafter{\romannumeral4}+
\uppercase\expandafter{\romannumeral5},
\end{split}\end{equation}
with the obvious meaning of $\uppercase\expandafter{\romannumeral1}$-$\uppercase\expandafter{\romannumeral5}$.
Now we are going to estimate the first terms $\uppercase\expandafter{\romannumeral1}$.
By the fundamental theorem of calculus, we infer from the ellipticity condition \eqref{A}$_1$ that
\begin{equation*}\begin{split}
\uppercase\expandafter{\romannumeral1}&=\int_{Q_\rho^{(\lambda)}}\phi(x)\zeta_\epsilon
(t)\mu(z)\int_0^1\big\langle \partial_wA(z,Dl+s(Du-Dl))(Du-Dl), Du-Dl\big\rangle\,\mathrm{d}s\,\mathrm{d}z
\\&\geq \nu\int_{Q_\rho^{(\lambda)}}\phi\zeta_\epsilon\int_0^1(1+|Dl+s(Du-Dl)|)^{p(z)-2}\,\mathrm{d}s\ |Du-Dl|^2\,\mathrm{d}z
\\&\geq c_1(\nu,\gamma_2)\int_{Q_\rho^{(\lambda)}}\phi\zeta_\epsilon(1+|Dl|+|Du|)^{p(z)-2}\ |Du-Dl|^2\,\mathrm{d}z.
\end{split}\end{equation*}
In the last line we have used \cite[Lemma 2.4]{BDHS} to estimate the integral from below.
Furthermore, we set $\bar \nu=c_1(\nu,\gamma_2)$ and decompose the right-hand side by
$\uppercase\expandafter{\romannumeral1}\geq \bar\nu(\uppercase\expandafter{\romannumeral1}_1+\uppercase\expandafter{\romannumeral1}_2),$
where
\begin{equation*}
\uppercase\expandafter{\romannumeral1}_1:=\int_{Q_\rho^{(\lambda)}}\phi\zeta_\epsilon(1+|Dl|+|Du|)^{p_0-2}\ |Du-Dl|^2\,\mathrm{d}z
\end{equation*}
and
\begin{equation*}
\uppercase\expandafter{\romannumeral1}_2:=\int_{Q_\rho^{(\lambda)}}\phi\zeta_\epsilon\Big((1+|Dl|+|Du|)^{p(z)-2}
-(1+|Dl|+|Du|)^{p_0-2}\Big)\ |Du-Dl|^2\,\mathrm{d}z
.
\end{equation*}
We now come to the estimate of $\uppercase\expandafter{\romannumeral1}_2$. First, we use
the fundamental theorem of calculus and Young's inequality to obtain
for any $\kappa\in(0,1)$ that
\begin{equation}\begin{split}\label{I2}
\uppercase\expandafter{\romannumeral1}_2&\leq c\omega_p(\rho)\int_{Q_{s_2}^{(\lambda)}}(1+|Du|+|Dl|)^{p_2-1}\log(1+|Dl|+|Du|)\ |Du-Dl|
\,\mathrm{d}z
\\&\leq \kappa\int_{Q_{s_2}^{(\lambda)}}(1+|Du|+|Dl|)^{p_0-2}|Du-Dl|^2 \,\mathrm{d}z
\\&\qquad\qquad+c(\kappa)\omega_p^2(\rho)\int_{Q_{\rho/2}^{(\lambda)}}(1+|Du|+|Dl|)^{2p_2-p_0}\log^2(1+|Dl|+|Du|)\,\mathrm{d}z
\\&=:\uppercase\expandafter{\romannumeral1}_3+\uppercase\expandafter{\romannumeral1}_4,
\end{split}\end{equation}
with the obvious meaning of $\uppercase\expandafter{\romannumeral1}_3$ and $\uppercase\expandafter{\romannumeral1}_4$. Since
$2p_2-p_0\leq p_1+2\omega_p(\rho)$, we apply
Lemma \ref{importanthigher} with $\gamma=2$ and $\theta=\frac{1}{2}$ to obtain
\begin{equation}\begin{split}\label{I4}
\uppercase\expandafter{\romannumeral1}_4&\leq c\omega_p^2(\rho)\int_{Q_{\rho/2}^{(\lambda)}}(1+|Du|+|Dl|)^{p_0+2\omega_p(\rho)}
\log^2(1+|Dl|+|Du|)\,\mathrm{d}z
\\&\leq c|Q_{\rho}^{(\lambda)}|\ (1+|Dl|)^{p_0}\ \Big(\omega_p(\rho)\log\Big(\frac{1}{\rho}\Big)\Big)^2.
\end{split}\end{equation}
At this satge, we choose $\kappa=\frac{1}{12}$ in \eqref{I2}. It follows from \eqref{I2} and \eqref{I4}
that for any $\tilde t\in (-\lambda^{2-p_0} s_1^2,0)$ there holds
\begin{equation}\begin{split}\label{estimate for I}
\lim_{\epsilon\downarrow0}
\uppercase\expandafter{\romannumeral1}&\geq
\bar\nu\int_{-\lambda^{2-p_0}s_1^2}^{\tilde t}\int_{B_{s_1}}(1+|Dl|+|Du|)^{p_0-2}\ |Du-Dl|^2\,\mathrm{d}x
\mathrm{d}t
\\&\quad-\frac{\bar\nu}{12}\int_{Q_{s_2}^{(\lambda)}}(1+|Du|+|Dl|)^{p_0-2}|Du-Dl|^2 \,\mathrm{d}z
\\&\quad -c|Q_{\rho}^{(\lambda)}|\ (1+|Dl|)^{p_0}\ \Big(\omega_p(\rho)\log\Big(\frac{1}{\rho}\Big)\Big)^2.
\end{split}\end{equation}
To estimate $\uppercase\expandafter{\romannumeral2}$, we use the mean value theorem from calculus, growth condition \eqref{A}$_2$
and the Young's inequality to obtain
for any $\kappa\in(0,1)$ that
\begin{equation*}\begin{split}
|\uppercase\expandafter{\romannumeral2}|
&=|\int_{Q_\rho^{(\lambda)}}\zeta_\epsilon(t)\mu(z)\big\langle A(z,Du)- A(z,Dl), (u-l)\otimes D\phi\big\rangle\,\mathrm{d}z|
\\&\leq
L\int_{Q_{s_2}^{(\lambda)}}\Big|\int_0^1\partial_wA(z,Dl+s(Du-Dl))
\cdot(Du-Dl)\,\mathrm{d}s\Big| \cdot\Big|\frac{u-l}{s_2-s_1}\Big|\,\mathrm{d}z
\\&\leq c\int_{Q_{s_2}^{(\lambda)}}(1+|Dl|+|Du|)^{p_2-2}|Du-Dl|\cdot\Big|\frac{u-l}{s_2-s_1}\Big|\,\mathrm{d}z
\\&\leq \kappa\int_{Q_{s_2}^{(\lambda)}}(1+|Dl|+|Du|)^{2p_2-p_0-2}|Du-Dl|^2\,\mathrm{d}z
\\&\qquad\qquad+c(\kappa)\int_{Q_{s_2}^{(\lambda)}}
(1+|Dl|+|Du|)^{p_0-2}\cdot\Big|\frac{u-l}{s_2-s_1}\Big|^2\,\mathrm{d}z
\\&=:\uppercase\expandafter{\romannumeral2}_1+
\uppercase\expandafter{\romannumeral2}_2,
\end{split}\end{equation*}
with the obvious meaning of $\uppercase\expandafter{\romannumeral2}_1$ and $\uppercase\expandafter{\romannumeral2}_2$.
In order to estimate $\uppercase\expandafter{\romannumeral2}_1$, we set $\kappa=\bar\nu/12$ and decompose
\begin{equation*}\begin{split}
\uppercase\expandafter{\romannumeral2}_1&= \frac{\bar\nu}{12}\int_{Q_{s_2}^{(\lambda)}}(1+|Dl|+|Du|)^{p_0-2}|Du-Dl|^2\,\mathrm{d}z
\\&\quad+\frac{\bar\nu}{12}\int_{Q_{s_2}^{(\lambda)}}\Big[(1+|Dl|+|Du|)^{2p_2-p_0-2}-(1+|Dl|+|Du|)^{p_0-2}\Big]\cdot|Du-Dl|^2\,\mathrm{d}z.
\end{split}\end{equation*}
It suffices to treat the second term on the right-hand side.
To this end, we use the mean value theorem from calculus and Young's inequality to obtain
\begin{equation*}\begin{split}
\frac{\bar\nu}{12}\int_{Q_{s_2}^{(\lambda)}}&\Big[(1+|Dl|+|Du|)^{2p_2-p_0-2}-(1+|Dl|+|Du|)^{p_0-2}\Big]\cdot|Du-Dl|^2\,\mathrm{d}z
\\&\leq c\omega_p(\rho)\int_{Q_{s_2}^{(\lambda)}}(1+|Dl|+|Du|)^{2p_2-p_0-2}|Du-Dl|^2\log(1+|Dl|+|Du|)\,\mathrm{d}z
\\&\leq c\int_{Q_{s_2}^{(\lambda)}}\omega_p(\rho)(1+|Dl|+|Du|)^{2p_2-\frac{3}{2}p_0}\ \log(1+|Dl|+|Du|)
\\&\qquad\qquad\times (1+|Dl|+|Du|)^{\frac{p_0}{2}-1}|Du-Dl|\,\mathrm{d}z
\\&\leq c\omega_p^2(\rho)\int_{Q_{\rho/2}^{(\lambda)}}(1+|Dl|+|Du|)^{4p_2-3p_0}\log^2(1+|Dl|+|Du|)\,\mathrm{d}z
\\&\qquad\qquad+\frac{\bar\nu}{12}\int_{Q_{s_2}^{(\lambda)}}(1+|Dl|+|Du|)^{p_0-2}|Du-Dl|^2\,\mathrm{d}z.
\end{split}\end{equation*}
Since $4p_2-3p_0\leq p_1+4\omega_p(\rho)$, we apply Lemma \ref{importanthigher} with $\gamma=2$ and $\theta=\frac{1}{2}$
to deduce
\begin{equation*}\begin{split}
\frac{\bar\nu}{12}\int_{Q_{s_2}^{(\lambda)}}&\Big[(1+|Dl|+|Du|)^{2p_2-p_0-2}-(1+|Dl|+|Du|)^{p_0-2}\Big]\cdot|Du-Dl|^2\,\mathrm{d}z
\\&\leq
\frac{\bar\nu}{12}\int_{Q_{s_2}^{(\lambda)}}(1+|Dl|+|Du|)^{p_0-2}|Du-Dl|^2\,\mathrm{d}z
+c|Q_{\rho}^{(\lambda)}|(1+|Dl|)^{p_0}\Big(\omega_p(\rho)\log\Big(\frac{1}{\rho}\Big)\Big)^2,
\end{split}\end{equation*}
and this leads us to
\begin{equation*}\begin{split}
\uppercase\expandafter{\romannumeral2}_1\leq
\frac{\bar\nu}{6}\int_{Q_{s_2}^{(\lambda)}}(1+|Dl|+|Du|)^{p_0-2}|Du-Dl|^2\,\mathrm{d}z
+c|Q_{\rho}^{(\lambda)}|(1+|Dl|)^{p_0}\Big(\omega_p(\rho)\log\Big(\frac{1}{\rho}\Big)\Big)^2.
\end{split}\end{equation*}
We now turn our
attention to the estimate of $\uppercase\expandafter{\romannumeral2}_2$.
Taking into account that $p_0\geq 2$, we use the Young's inequality with exponents $\frac{p_0}{2}$ and $\frac{p_0}{p_0-2}$ to obtain
\begin{equation*}\begin{split}
\uppercase\expandafter{\romannumeral2}_2&\leq c\int_{Q_{s_2}^{(\lambda)}}
(1+|Dl|)^{p_0-2}\cdot\Big|\frac{u-l}{s_2-s_1}\Big|^2+
|Du-Dl|^{p_0-2}\cdot\Big|\frac{u-l}{s_2-s_1}\Big|^2\,\mathrm{d}z
\\&\leq c(1+|Dl|)^{p_0-2}\int_{Q_{s_2}^{(\lambda)}}
\Big|\frac{u-l}{s_2-s_1}\Big|^2\,\mathrm{d}z+c\int_{Q_{s_2}^{(\lambda)}}
\Big|\frac{u-l}{s_2-s_1}\Big|^{p_0}\,\mathrm{d}z
\\&\qquad +\frac{\bar\nu}{12}\int_{Q_{s_2}^{(\lambda)}}|Du-Dl|^{p_0}\,\mathrm{d}z
\\&\leq c(1+|Dl|)^{p_0-2}\int_{Q_{s_2}^{(\lambda)}}
\Big|\frac{u-l}{s_2-s_1}\Big|^2\,\mathrm{d}z+c\int_{Q_{s_2}^{(\lambda)}}
\Big|\frac{u-l}{s_2-s_1}\Big|^{p_0}\,\mathrm{d}z
\\&\qquad +\frac{\bar\nu}{12}\int_{Q_{s_2}^{(\lambda)}}(1+|Dl|+|Du|)^{p_0-2}|Du-Dl|^2\,\mathrm{d}z.
\end{split}\end{equation*}
Therefore, combining the estimates for $\uppercase\expandafter{\romannumeral2}_1$
and $\uppercase\expandafter{\romannumeral2}_2$, we conclude that
\begin{equation}\begin{split}\label{estimate for II}
|\uppercase\expandafter{\romannumeral2}|&\leq c(1+|Dl|)^{p_0-2}\int_{Q_{s_2}^{(\lambda)}}
\Big|\frac{u-l}{s_2-s_1}\Big|^2\,\mathrm{d}z+c\int_{Q_{s_2}^{(\lambda)}}
\Big|\frac{u-l}{s_2-s_1}\Big|^{p_0}\,\mathrm{d}z
\\&\qquad +\frac{\bar\nu}{4}\int_{Q_{s_2}^{(\lambda)}}(1+|Dl|+|Du|)^{p_0-2}|Du-Dl|^2\,\mathrm{d}z
\\&\qquad+c|Q_{\rho}^{(\lambda)}|(1+|Dl|)^{p_0}\Big(\omega_p(\rho)\log\Big(\frac{1}{\rho}\Big)\Big)^2.
\end{split}\end{equation}
Now we come to the estimate of $\uppercase\expandafter{\romannumeral3}$.
We first note that the smallness condition $\rho<\rho_0$ allows us to use Lemma \ref{lambda}.
To this end,
we use the growth condition \eqref{A}$_2$ and Lemma \ref{lambda} (3) to obtain
\begin{equation*}\begin{split}
|\uppercase\expandafter{\romannumeral3}|&=
|\int_{Q_\rho^{(\lambda)}}[\mu(z)-(\mu)_{Q_\rho^{(\lambda)}}]\big\langle A(z,Dl), D\varphi\big\rangle\,\mathrm{d}z|
\\&\leq c(1+|Dl|)^{p_2-1}\int_{Q_{s_2}^{(\lambda)}}|\mu(z)-(\mu)_{Q_\rho^{(\lambda)}}|
|D\varphi|\,\mathrm{d}z
\\&\leq c(1+|Dl|)^{p_0-1}\int_{Q_{s_2}^{(\lambda)}}|\mu(z)-(\mu)_{Q_\rho^{(\lambda)}}|
|Du-Dl|\,\mathrm{d}z
\\&\quad+c(1+|Dl|)^{p_0-1}\int_{Q_{s_2}^{(\lambda)}}|\mu(z)-(\mu)_{Q_\rho^{(\lambda)}}|
\Big|\frac{u-l}{s_2-s_1}\Big|\,\mathrm{d}z
\\&=:\uppercase\expandafter{\romannumeral3}_1+\uppercase\expandafter{\romannumeral3}_2,
\end{split}\end{equation*}
with the obvious meaning of $\uppercase\expandafter{\romannumeral3}_1$ and $\uppercase\expandafter{\romannumeral3}_2$.
Since $p_0\geq 2$, we apply the Young's inequality to obtain
\begin{equation*}\begin{split}
\uppercase\expandafter{\romannumeral3}_1&\leq c(1+|Dl|)^{\frac{p_0}{2}}
\int_{Q_{s_2}^{(\lambda)}}(1+|Dl|+|Du|)^{\frac{p_0}{2}-1}|\mu(z)-(\mu)_{Q_\rho^{(\lambda)}}|
|Du-Dl|\,\mathrm{d}z
\\&\leq \frac{\bar\nu}{12}\int_{Q_{s_2}^{(\lambda)}}(1+|Dl|+|Du|)^{p_0-2}|Du-Dl|^2\,\mathrm{d}z
+c(1+|Dl|)^{p_0}\int_{Q_{s_2}^{(\lambda)}}|\mu(z)-(\mu)_{Q_\rho^{(\lambda)}}|^2\,\mathrm{d}z
\\&\leq \frac{\bar\nu}{12}\int_{Q_{s_2}^{(\lambda)}}(1+|Dl|+|Du|)^{p_0-2}|Du-Dl|^2\,\mathrm{d}z
+c(1+|Dl|)^{p_0}|Q_\rho^{(\lambda)}|v(\rho),
\end{split}\end{equation*}
where we used
$\sqrt{\nu}\leq \mu(z)\leq \sqrt{L}$ in the last step.
To estimate $\uppercase\expandafter{\romannumeral3}_2$, we use the H\"older's inequality to deduce
\begin{equation*}\begin{split}
\uppercase\expandafter{\romannumeral3}_2&=c\int_{Q_{s_2}^{(\lambda)}}\left[(1+|Dl|)^{\frac{p_0}{2}}|\mu(z)-(\mu)_{Q_\rho^{(\lambda)}}|\right]
\times
\left[(1+|Dl|)^{\frac{p_0}{2}-1}\Big|\frac{u-l}{s_2-s_1}\Big|\right]\,\mathrm{d}z
\\&\leq c(1+|Dl|)^{p_0}|Q_\rho^{(\lambda)}|v(\rho)+(1+|Dl|)^{p_0-2}\int_{Q_{s_2}^{(\lambda)}}\Big|\frac{u-l}{s_2-s_1}\Big|^2\,\mathrm{d}z.
\end{split}\end{equation*}
Consequently, we conclude from the estimates for $\uppercase\expandafter{\romannumeral3}_1$
and $\uppercase\expandafter{\romannumeral3}_2$ that
\begin{equation}\begin{split}\label{estimate for III}
|\uppercase\expandafter{\romannumeral3}|&\leq \frac{\bar\nu}{12}\int_{Q_{s_2}^{(\lambda)}}(1+|Dl|+|Du|)^{p_0-2}|Du-Dl|^2\,\mathrm{d}z
\\&\qquad +c(1+|Dl|)^{p_0}|Q_\rho^{(\lambda)}|v(\rho)+(1+|Dl|)^{p_0-2}\int_{Q_{s_2}^{(\lambda)}}\Big|\frac{u-l}{s_2-s_1}\Big|^2\,\mathrm{d}z.
\end{split}\end{equation}
Next, we consider the estimate for $\uppercase\expandafter{\romannumeral4}$. To this end, we apply
the continuity condition \eqref{Az1z2}, Lemma \ref{lambda} (3), (4)
and take into account that $\sqrt{\nu}\leq \mu(z)\leq \sqrt{L}$. This yields
\begin{equation*}\begin{split}
|\uppercase\expandafter{\romannumeral4}|&=
|\int_{Q_\rho^{(\lambda)}}(\mu)_{Q_\rho^{(\lambda)}}\big\langle A(z,Dl)- A(0,Dl), D\varphi\big\rangle\,\mathrm{d}z|
\\&\leq c\sqrt{L}\omega_p(\rho)\int_{Q_{s_2}^{(\lambda)}}(1+|Dl|)^{p_2-1}(1+\log(1+|Dl|))|D\varphi|\,\mathrm{d}z
\\&\leq c\omega_p(\rho)\log\Big(\frac{1}{\rho}\Big)(1+|Dl|)^{p_0-1}\int_{Q_{s_2}^{(\lambda)}}|Du-Dl|\,\mathrm{d}z
\\&\qquad+c\omega_p(\rho)\log\Big(\frac{1}{\rho}\Big)(1+|Dl|)^{p_0-1}\int_{Q_{s_2}^{(\lambda)}}\Big|\frac{u-l}{s_2-s_1}\Big|\,\mathrm{d}z
\\&=:\uppercase\expandafter{\romannumeral4}_1+\uppercase\expandafter{\romannumeral4}_2,
\end{split}\end{equation*}
with the obvious meaning of $\uppercase\expandafter{\romannumeral4}_1$ and $\uppercase\expandafter{\romannumeral4}_2$.
To estimate $\uppercase\expandafter{\romannumeral4}_1$, we use the Young's inequality to deduce
\begin{equation*}\begin{split}
\uppercase\expandafter{\romannumeral4}_1&\leq c\int_{Q_{s_2}^{(\lambda)}}\left[\omega_p(\rho)\log\Big(\frac{1}{\rho}\Big)(1+|Dl|)^{\frac{p_0}
{2}}\right]\cdot\left[(1+|Dl|+|Du|)^{\frac{p_0}{2}-1}|Du-Dl|\right]\,\mathrm{d}z
\\&\leq \frac{\bar\nu}{12}\int_{Q_{s_2}^{(\lambda)}}(1+|Dl|+|Du|)^{p_0-2}|Du-Dl|^2\,\mathrm{d}z
+
c|Q_{\rho}^{(\lambda)}|(1+|Dl|)^{p_0}\Big(\omega_p(\rho)\log\Big(\frac{1}{\rho}\Big)\Big)^2,
\end{split}\end{equation*}
since $p_0\geq2$.
Similarly, in order to estimate $\uppercase\expandafter{\romannumeral4}_2$, we use the Young's inequality again and deduce
\begin{equation*}\begin{split}
\uppercase\expandafter{\romannumeral4}_2&
\leq c\int_{Q_{s_2}^{(\lambda)}}(1+|Dl|)^{p_0-2}\Big|\frac{u-l}{s_2-s_1}\Big|^2\,\mathrm{d}z+
c|Q_{\rho}^{(\lambda)}|(1+|Dl|)^{p_0}\Big(\omega_p(\rho)\log\Big(\frac{1}{\rho}\Big)\Big)^2.
\end{split}\end{equation*}
Consequently, we infer that
\begin{equation}\begin{split}\label{estimate for IV}
\uppercase\expandafter{\romannumeral4}&\leq \frac{\bar\nu}{12}\int_{Q_{s_2}^{(\lambda)}}(1+|Dl|+|Du|)^{p_0-2}|Du-Dl|^2\,\mathrm{d}z
\\&\quad+
c|Q_{\rho}^{(\lambda)}|(1+|Dl|)^{p_0}\Big(\omega_p(\rho)\log\Big(\frac{1}{\rho}\Big)\Big)^2
+c\int_{Q_{s_2}^{(\lambda)}}(1+|Dl|)^{p_0-2}\Big|\frac{u-l}{s_2-s_1}\Big|^2\,\mathrm{d}z.
\end{split}\end{equation}
Finally, we address the estimate of $\uppercase\expandafter{\romannumeral5}$.
In the following we shall proceed formally by assuming that the time derivative exists, since
the arguments can be
made rigorous by the use of the
Steklov averages. Therefore, we find that
\begin{equation*}\begin{split}
\uppercase\expandafter{\romannumeral5}&=
\int_{Q_\rho^{(\lambda)}}|u-l|^2\phi\partial_t\zeta_\epsilon\,\mathrm{d}z+
\frac{1}{2}\int_{Q_\rho^{(\lambda)}}\phi\zeta_\epsilon\partial_t(|u-l|^2)\,\mathrm{d}z
\\&=\frac{1}{2}
\int_{Q_\rho^{(\lambda)}}|u-l|^2\phi\partial_t\zeta_\epsilon\,\mathrm{d}z
\\&\leq\frac{1}{2}\lambda^{p_0-2}\int_{-\lambda^{2-p_0}s_2^2}^{-\lambda^{2-p_0}s_1^2}\int_{B_\rho}
\Big|\frac{u-l}{s_2-s_1}\Big|^2\phi\,\mathrm{d}x\mathrm{d}t
-\frac{1}{2\epsilon}\int_{\tilde t}^{\tilde t+\epsilon}\int_{B_\rho}|u-l|^2\phi\,\mathrm{d}x\mathrm{d}t.
\end{split}\end{equation*}
Recalling that $\lambda\leq 1+|Dl|$, then we conclude
that for any time level $\tilde t\in (-\lambda^{2-p_0} (\rho/4)^2,0)$ there holds
\begin{equation}\begin{split}\label{estimate for V}
\lim_{\epsilon\downarrow 0}\uppercase\expandafter{\romannumeral5}&
\leq\frac{1}{2}(1+|Dl|)^{p_0-2}\int_{Q_\rho^{(\lambda)}}
\Big|\frac{u-l}{s_2-s_1}\Big|^2\,\mathrm{d}z
 -\frac{1}{2}\int_{B_{\rho/4}}|u(\cdot,\tilde t)-l|^2\,\mathrm{d}x,
\end{split}\end{equation}
since $p_0\geq2$.
Combining \eqref{estimate for I}, \eqref{estimate for II}, \eqref{estimate for III},
\eqref{estimate for IV} and \eqref{estimate for V}, we conclude that
for any fixed $\frac{1}{4}\rho<s_1<s_2<\frac{1}{2}\rho$ and any time level $\tilde t\in (-\lambda^{2-p_0} (\rho/4)^2,0)$ there holds
\begin{equation}\begin{split}\label{guocheng}
\int_{B_{\rho/4}}&|u(\cdot,\tilde t)-l|^2\,\mathrm{d}x+
\bar\nu\int_{-\lambda^{2-p_0}s_1^2}^{\tilde t}\int_{B_{s_1}}(1+|Dl|+|Du|)^{p_0-2}\ |Du-Dl|^2\,\mathrm{d}x
\\&\leq \frac{\bar\nu}{2}\int_{Q_{s_2}^{(\lambda)}}(1+|Dl|+|Du|)^{p_0-2}|Du-Dl|^2\,\mathrm{d}z
\\&\quad+
c(1+|Dl|)^{p_0-2}\int_{Q_{\rho}^{(\lambda)}}
\Big|\frac{u-l}{s_2-s_1}\Big|^2\,\mathrm{d}z+c\int_{Q_{\rho}^{(\lambda)}}
\Big|\frac{u-l}{s_2-s_1}\Big|^{p_0}\,\mathrm{d}z
\\&\quad+c|Q_{\rho}^{(\lambda)}|(1+|Dl|)^{p_0}\Big(v(\rho)+\Big(\omega_p(\rho)\log\Big(\frac{1}{\rho}\Big)\Big)^2\Big)
.
\end{split}\end{equation}
Taking into account that $\tilde t\in(-\lambda^{2-p_0} (\rho/4)^2,0)$ is arbitrary,
we first pass to the limit $\tilde t\uparrow0$ in \eqref{guocheng}
and then take the supremum for $\tilde t\in(-\lambda^{2-p_0} (\rho/4)^2,0)$ in \eqref{guocheng} again. This gives
\begin{equation*}\begin{split}
\sup_{\tilde t\in(-\lambda^{2-p_0} (\rho/4)^2,0)}\int_{B_{\rho/4}}&|u(\cdot,\tilde t)-l|^2\,\mathrm{d}x+
\bar\nu\int_{Q_{s_1}^{(\lambda)}}(1+|Dl|+|Du|)^{p_0-2}\ |Du-Dl|^2\,\mathrm{d}x
\\&\leq \frac{\bar\nu}{2}\int_{Q_{s_2}^{(\lambda)}}(1+|Dl|+|Du|)^{p_0-2}|Du-Dl|^2\,\mathrm{d}z
\\&\quad+
c(1+|Dl|)^{p_0-2}\int_{Q_{\rho}^{(\lambda)}}
\Big|\frac{u-l}{s_2-s_1}\Big|^2\,\mathrm{d}z+c\int_{Q_{\rho}^{(\lambda)}}
\Big|\frac{u-l}{s_2-s_1}\Big|^{p_0}\,\mathrm{d}z
\\&\quad+c|Q_{\rho}^{(\lambda)}|(1+|Dl|)^{p_0}\Big(v(\rho)+\Big(\omega_p(\rho)\log\Big(\frac{1}{\rho}\Big)\Big)^2\Big)
.
\end{split}\end{equation*}
At this point, we can use an iteration lemma (cf. \cite[Lemma 2.2]{DMS})
to re-absorb the first integral of the right-hand side into the left.
This leads us to
\begin{equation*}\begin{split}
\sup_{\tilde t\in(-\lambda^{2-p_0} (\rho/4)^2,0)}\int_{B_{\rho/4}}&|u(\cdot,\tilde t)-l|^2\,\mathrm{d}x+
\int_{Q_{\rho/4}^{(\lambda)}}(1+|Dl|+|Du|)^{p_0-2}\ |Du-Dl|^2\,\mathrm{d}x
\\&\leq
c(1+|Dl|)^{p_0-2}\int_{Q_{\rho}^{(\lambda)}}
\Big|\frac{u-l}{\rho}\Big|^2\,\mathrm{d}z+c\int_{Q_{\rho}^{(\lambda)}}
\Big|\frac{u-l}{\rho}\Big|^{p_0}\,\mathrm{d}z
\\&\quad+c|Q_{\rho}^{(\lambda)}|(1+|Dl|)^{p_0}\Big(v(\rho)+\Big(\omega_p(\rho)\log\Big(\frac{1}{\rho}\Big)\Big)^2\Big),
\end{split}\end{equation*}
which proves the desired estimate \eqref{Cac}. Dividing by $|Q_{\rho}^{(\lambda)}|$, it is now obvious that the lemma holds.
\end{proof}
\section{Decay estimate}
This section is devoted to the study of a decay estimate which plays a
crucial role in the partial regularity proof. We first show that the weak solution minus an affine function is
approximatively $A$-caloric.
Before giving the precise statement of this result we
introduce the concept of the hybrid excess functional.
For $z_0\in\Omega_T$, $\rho\in(0,1)$, $\lambda\geq1$ and
an affine function $l:\mathbb{R}^n\to\mathbb{R}^N$, we define the hybrid excess functional
$\Psi^*_\lambda(z_0,\rho,l)$ by
\begin{equation}\label{hybrid excess}\Psi^*_\lambda(z_0,\rho,l)=\Psi_\lambda(z_0,\rho,l)+v(\rho)+\omega_p(\rho)\log\Big(\frac{1}{\rho}\Big),
\end{equation}
where $\Psi_\lambda(z_0,\rho,l)$ is defined in \eqref{psi}.
We
are now in a position to state the following linearization result.
\begin{lemma}\label{linearity lemma}
Let $z_0\in\Omega_T$ and let $l$ be an affine function. Let $\rho_0>0$ be the radius in Lemma \ref{importanthigher}.
Assume that $\rho<\rho_0$, $\lambda\geq1$ and $\lambda\leq 1+|Dl|\leq M\lambda$ for some $M\geq1$.
Moreover, suppose that $\Phi^{(\lambda)}(z_0,\rho,l)\leq\frac{1}{16}$.
Then,
there exists a constant $c$ depending only upon $n$, $N$, $E$, $\gamma_2$, $\nu$, $L$ and $M$
such that
for any $\varphi\in C_0^\infty(Q_{\rho/4}^{(\lambda)}(z_0))$ there holds
\begin{equation}\begin{split}\label{linearity}
\Big|&\ \dashint_{Q_{\rho/4}^{(\lambda)}(z_0)}(u-l)\cdot\partial_t\varphi-
\big\langle(\mu)_{Q_{\rho/4}^{(\lambda)}(z_0)} A(z_0,Dl)\cdot(Du-Dl), D\varphi\big\rangle\,\mathrm{d}z\Big|\\
&\leq c(1+|Dl|)^{p_0-1}\Big[\Psi^*_\lambda(z_0,\rho,l)+
\omega_a^{\frac{1}{2}}
\Big(\sqrt{\Psi^*_\lambda(z_0,\rho,l)}
\Big)\sqrt{\Psi^*_\lambda(z_0,\rho,l)}\ \Big]
\sup_{Q_{\rho/4}^{(\lambda)}(z_0)}|D\varphi|,
\end{split}\end{equation}
where $p_0=p(z_0)$.
\end{lemma}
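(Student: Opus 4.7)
Without loss of generality I take $z_0=0$ and write $Q:=Q_{\rho/4}^{(\lambda)}(0)$ and $p_0=p(0)$. The starting point is the weak formulation \eqref{weaksolution} tested against $\varphi$; since $l$ is affine in $x$ and independent of $t$, $\dashint_Q l\cdot\partial_t\varphi\,\mathrm{d}z=0$, and since the constant matrix $A(0,Dl)$ pairs with $\dashint_Q D\varphi\,\mathrm{d}z=0$, one can freely subtract the terms $(\mu)_Q A(0,Dl)$ and $(\mu)_Q\partial_wA(0,Dl)\,Dl$ inside the integrand. This reduces the quantity to estimate to
\[
\dashint_Q \bigl[\mu(z)A(z,Du)-(\mu)_Q A(0,Dl)-(\mu)_Q\partial_wA(0,Dl)(Du-Dl)\bigr]\cdot D\varphi\,\mathrm{d}z.
\]
The plan is to split the bracket into the three natural error contributions
\[
J_1=\mu(z)\bigl[A(z,Du)-A(0,Du)\bigr],\qquad J_2=\bigl[\mu(z)-(\mu)_Q\bigr]A(0,Du),
\]
\[
J_3=(\mu)_Q\bigl[A(0,Du)-A(0,Dl)-\partial_wA(0,Dl)(Du-Dl)\bigr],
\]
and to bound each term by the corresponding piece of $\Psi^*_\lambda$ after pulling out $\sup_Q|D\varphi|$.

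For $J_1$ I would invoke the continuity assumption \eqref{Az1z2}, which produces the factor $\omega_p(\rho)$ multiplied by $[(1+|Du|)^{p(z)-1}+(1+|Du|)^{p_0-1}][1+\log(1+|Du|)]$. Since the oscillation satisfies $p_2-p_1\leq\omega_p(\rho)$, by H\"older's inequality one can pass from powers $p_2-1$ or $p_0-1$ to the single power $p_0$ at the price of the negligible factor $\rho^{-c\omega_p(\rho)}\leq c$. The $L^{p_0}\log L$ estimate of Lemma \ref{importanthigher} (applied with $\theta=1/4$, $\gamma=1$, $\lambda\geq 1$) then yields
\[
\dashint_Q|J_1|\,\mathrm{d}z\leq c(1+|Dl|)^{p_0-1}\,\omega_p(\rho)\log(1/\rho),
\]
which is absorbed in $\Psi^*_\lambda$. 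The term $J_2$ is handled similarly: the growth bound \eqref{A}$_2$ controls $|A(0,Du)|$ by $(1+|Du|)^{p_0-1}$, H\"older in $z$ combined with the VMO condition \eqref{VMO} on $\mu$ and the bound for $\dashint (1+|Du|)^{p_0}$ coming from Lemma \ref{lambda}(2) give $\dashint_Q|J_2|\,\mathrm{d}z\leq c(1+|Dl|)^{p_0-1}v(\rho)$.

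The main obstacle is the linearization error $J_3$. Writing it via the fundamental theorem of calculus and using \eqref{DAz1z2} gives
\[
|J_3|\leq c\,\hat\omega_a\!\left(\tfrac{|Du-Dl|}{1+|Dl|+|Du-Dl|}\right)(1+|Dl|+|Du-Dl|)^{p_0-2}|Du-Dl|.
\]
To extract the $\omega_a^{1/2}(\sqrt{\Psi^*_\lambda})\sqrt{\Psi^*_\lambda}$ structure, my plan is to apply Cauchy--Schwarz so that one factor contains $\hat\omega_a$ and the other contains $(1+|Dl|+|Du|)^{p_0-2}|Du-Dl|^2$. The second factor is precisely what the Caccioppoli estimate \eqref{Cac} controls by $c(1+|Dl|)^{p_0}\Psi^*_\lambda$. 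For the first factor I would split $Q$ into $\{|Du-Dl|\leq 1+|Dl|\}$ and its complement: on the former region $\hat\omega_a$ is evaluated at a bounded argument and one uses boundedness together with concavity plus Jensen to pass $\hat\omega_a$ inside the mean; on the latter region $\hat\omega_a$ is bounded by its norm and the region has small measure by Chebyshev against the $L^{p_0}$-piece of $\Psi_\lambda$. Combining, concavity of $\hat\omega_a$ gives $\dashint\hat\omega_a^2(\cdots)\leq c\,\hat\omega_a(\sqrt{\Psi^*_\lambda})$, from which the claimed exponent $1/2$ appears after taking the square root. Summing the bounds for $J_1$, $J_2$, $J_3$ and factoring out $\sup_Q|D\varphi|$ yields \eqref{linearity}.
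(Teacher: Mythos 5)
Your decomposition is essentially a cosmetic regrouping of the paper's own: the same three error contributions (VMO oscillation of $\mu$, continuity of $A$ in $z$, and linearization of $A$ in $w$) appear, and the plan for $J_3$ — Cauchy--Schwarz, then Caccioppoli, then concavity of $\hat\omega_a$ with a split into $\{|Du-Dl|\le 1+|Dl|\}$ and its complement — is precisely the argument from Mons that the paper cites. That part is fine; note only that the complement set $\{|Du-Dl|>1+|Dl|\}$ does not go into the $\hat\omega_a(\sqrt{\Psi^*_\lambda})$ term (there $\hat\omega_a$ is of order $\hat\omega_a(1)$, not small); it is the source of the $\Psi^*_\lambda$ summand in the final bound, which is consistent with \eqref{linearity}.

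The real gap is in $J_2$. From your sketch, H\"older in $z$ against the $L^{p_0}$ energy of $Du$ gives $\bigl(\dashint|\mu-(\mu)_{Q}|^{p_0}\bigr)^{1/p_0}\le c\,v(\rho)^{1/p_0}$, not $v(\rho)$, and this cannot be improved without further structure: since $v(\rho)\to0$, the power $v(\rho)^{1/p_0}$ is strictly larger and is not dominated by $\Psi^*_\lambda=\Psi_\lambda+v(\rho)+\omega_p\log(1/\rho)$. So the claimed estimate $\dashint_Q|J_2|\le c(1+|Dl|)^{p_0-1}v(\rho)$ is unsupported by what you wrote. The paper avoids this by first expanding $A$ around $Dl$ so that a $|Du-Dl|^{p(z)-1}$ factor appears; after splitting off $\{|Du-Dl|\le1\}$ (which gives $v(\rho)$ directly) and applying H\"older on the remainder, it is left with $v(\rho)^{1/p_0}$ multiplied by $\Phi^{(\lambda)}(z_0,\rho/4,l)^{(p_0-1)/p_0}$, and Young's inequality with exponents $p_0$, $p_0/(p_0-1)$ together with the Caccioppoli fact $\Phi^{(\lambda)}(z_0,\rho/4,l)\le c\,\Psi^*_\lambda(z_0,\rho,l)$ then resolves this into $v(\rho)+\Psi^*_\lambda$. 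You invoke the Caccioppoli inequality only for $J_3$, not here, which is where the argument breaks.

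A smaller issue affects $J_1$: Lemma \ref{importanthigher} with $\gamma=1$ returns a right-hand side proportional to $(1+|Dl|)^{p_0}$, not $(1+|Dl|)^{p_0-1}$. To shave the extra power, the paper first applies H\"older with exponents $p_0/(p_2-1)$ and its conjugate, then Lemma \ref{importanthigher} with $\gamma=p_0/(p_2-1)$, so that the outer power $(p_2-1)/p_0$ reduces $(1+|Dl|)^{p_0}$ to $(1+|Dl|)^{p_2-1}\le c(1+|Dl|)^{p_0-1}$ (using Lemma \ref{lambda}(3)). Your remark about ``passing from $p_2-1$ to $p_0$ at the price of $\rho^{-c\omega_p(\rho)}$'' does not produce this reduction in the $Dl$-power, so as written $J_1$ is also off by one power of $(1+|Dl|)$.
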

\begin{proof} Without loss of generality, we may prove the lemma in the case that
 $z_0=0$ and $\sup_{Q_{\rho/4}^{(\lambda)}}|D\varphi|=1$.
For simplicity of notation, we write $\Phi^{(\lambda)}(z_0,\rho,l)$, $\Psi_\lambda(z_0,\rho,l)$ and $\Psi^*_\lambda(z_0,\rho,l)$
for $\Phi^{(\lambda)}(\rho)$, $\Psi_\lambda(\rho)$ and $\Psi^*_\lambda(\rho)$, respectively. To start with, we first
observe that
\begin{equation*}\int_{Q_{\rho/4}^{(\lambda)}}l\cdot\partial_t\varphi\,\mathrm{d}z=0\qquad\text{and}\qquad
\int_{Q_{\rho/4}^{(\lambda)}}(\mu)_{Q_{\rho/4}^{(\lambda)}}\big\langle A(0,Dl), D\varphi\big\rangle\,\mathrm{d}z=0,\end{equation*}
since $\varphi\in C_0^\infty(Q_{\rho/4}^{(\lambda)})$.
From the identities above and weak formulation \eqref{weaksolution}, we have the decomposition
\begin{equation*}\begin{split}
&\ \dashint_{Q_{\rho/4}^{(\lambda)}}(u-l)\cdot\partial_t\varphi-
\big\langle(\mu)_{Q_{\rho/4}^{(\lambda)}} A(0,Dl)\cdot(Du-Dl), D\varphi\big\rangle\,\mathrm{d}z
=\uppercase\expandafter{\romannumeral1}+\uppercase\expandafter{\romannumeral2}+\uppercase\expandafter{\romannumeral3},
\end{split}\end{equation*}
where
\begin{equation*}\begin{split}
\uppercase\expandafter{\romannumeral1}&=\ \dashint_{Q_{\rho/4}^{(\lambda)}}\big\langle(\mu-(\mu)_{Q_{\rho/4}^{(\lambda)}})
A(z,Du),D\varphi\big\rangle\,\mathrm{d}z,
\\ \uppercase\expandafter{\romannumeral2}&=\ \dashint_{Q_{\rho/4}^{(\lambda)}}\big\langle(\mu)_{Q_{\rho/4}^{(\lambda)}}
(A(z,Du)-A(0,Du)),D\varphi\big\rangle\,\mathrm{d}z
\end{split}\end{equation*}
and
\begin{equation*}\begin{split}
\uppercase\expandafter{\romannumeral3}&=\ \dashint_{Q_{\rho/4}^{(\lambda)}}\big\langle(\mu)_{Q_{\rho/4}^{(\lambda)}}
(A(0,Du)-A(0,Dl)),D\varphi\big\rangle\,\mathrm{d}z
\\&\quad-\ \dashint_{Q_{\rho/4}^{(\lambda)}}\big\langle(\mu)_{Q_{\rho/4}^{(\lambda)}}
\partial_wA(0,Dl)\cdot(Du-Dl),D\varphi\big\rangle\,\mathrm{d}z.
\end{split}\end{equation*}
Now we are going to estimate the terms $\uppercase\expandafter{\romannumeral1}$-$\uppercase\expandafter{\romannumeral3}$.
Once again, we set
$p_1=\inf _{Q_{\rho}^{(\lambda)}}p(z)$ and $p_2=\sup _{Q_{\rho}^{(\lambda)}}p(z)$.
For the estimate of $\uppercase\expandafter{\romannumeral1}$, we use the growth condition \eqref{A}$_2$ and decompose
\begin{equation*}\begin{split}
|\uppercase\expandafter{\romannumeral1}|&\leq\ \dashint_{Q_{\rho/4}^{(\lambda)}}|\mu-(\mu)_{Q_{\rho/4}^{(\lambda)}}|
|A(z,Du)||D\varphi|\,\mathrm{d}z
\\&\leq \sqrt{L} \ \dashint_{Q_{\rho/4}^{(\lambda)}}|\mu-(\mu)_{Q_{\rho/4}^{(\lambda)}}|(1+|Du|)^{p(z)-1}\,\mathrm{d}z
\\&\leq c\ \dashint_{Q_{\rho/4}^{(\lambda)}}|\mu-(\mu)_{Q_{\rho/4}^{(\lambda)}}|(1+|Dl|)^{p_2-1}\,\mathrm{d}z
\\&+c\ \dashint_{Q_{\rho/4}^{(\lambda)}}|\mu-(\mu)_{Q_{\rho/4}^{(\lambda)}}||Du-Dl|^{p(z)-1}\,\mathrm{d}z
=:\uppercase\expandafter{\romannumeral1}_1+\uppercase\expandafter{\romannumeral1}_2,
\end{split}\end{equation*}
with the obvious meaning of $\uppercase\expandafter{\romannumeral1}_1$ and $\uppercase\expandafter{\romannumeral1}_2$.
To estimate $\uppercase\expandafter{\romannumeral1}_1$,
we use the definition of $v(\rho)$ to deduce
$\uppercase\expandafter{\romannumeral1}_1\leq c(1+|Dl|)^{p_0-1}v(\rho).$
To estimate $\uppercase\expandafter{\romannumeral1}_2$, we decompose
\begin{equation*}\begin{split}
\uppercase\expandafter{\romannumeral1}_2&=c\frac{1}{|Q_{\rho/4}^{(\lambda)}|}
\int_{Q_{\rho/4}^{(\lambda)}}|\mu-(\mu)_{Q_{\rho/4}^{(\lambda)}}||Du-Dl|^{p(z)-1}\chi_{\{|Du-Dl|\leq1\}}\,\mathrm{d}z
\\&\quad+c\frac{1}{|Q_{\rho/4}^{(\lambda)}|}
\int_{Q_{\rho/4}^{(\lambda)}}|\mu-(\mu)_{Q_{\rho/4}^{(\lambda)}}||Du-Dl|^{p(z)-1}\chi_{\{|Du-Dl|>1\}}\,\mathrm{d}z
\\&\leq cv(\rho)+\uppercase\expandafter{\romannumeral1}_3,
\end{split}\end{equation*}
where
\begin{equation*}\uppercase\expandafter{\romannumeral1}_3=\
\dashint_{Q_{\rho/4}^{(\lambda)}}|\mu-(\mu)_{Q_{\rho/4}^{(\lambda)}}||Du-Dl|^{p_2-1}\chi_{\{|Du-Dl|>1\}}\,\mathrm{d}z.\end{equation*}
To proceed further, we apply the H\"older's inequality with exponents $p_0$ and $\frac{p_0}{p_0-1}$
to conclude that
\begin{equation}\begin{split}\label{II3}
\uppercase\expandafter{\romannumeral1}_3
&\leq c\Big( \ \dashint_{Q_{\rho/4}^{(\lambda)}}|Du-Dl|^{\frac{p_0}{p_0-1}(p_2-1)}\chi_{\{|Du-Dl|>1\}}\,\mathrm{d}z\Big)^{\frac{p_0-1}{p_0}}
\\&\quad \times \Big( \ \dashint_{Q_{\rho/4}^{(\lambda)}}|\mu-(\mu)_{Q_{\rho/4}^{(\lambda)}}|^{p_0}\,\mathrm{d}z\Big)^{\frac{1}{p_0}}
\\&\leq cv(\rho)^{\frac{1}{p_0}}
\Big( \ \dashint_{Q_{\rho/4}^{(\lambda)}}|Du-Dl|^{\frac{p_0}{p_0-1}(p_2-1)}\chi_{\{|Du-Dl|>1\}}\,\mathrm{d}z\Big)^{\frac{p_0-1}{p_0}},
\end{split}\end{equation}
since $\sqrt{\nu}\leq \mu(z)\leq \sqrt{L}$. Moreover,
for any $z\in\{z\in Q_{\rho/4}^{(\lambda)}:\ |Du-Dl|>1\}$, we use the mean value theorem from calculus to find that
\begin{equation*}\begin{split}
|Du-Dl|^{\frac{p_0}{p_0-1}(p_2-1)}-|Du-Dl|^{p_0}&\leq \frac{p_0}{p_0-1}(p_2-p_0)|Du-Dl|^{\frac{p_0}{p_0-1}(p_2-1)}\log|Du-Dl|
\\&\leq 2\omega_p(\rho)|Du-Dl|^{p_1+2\omega_p(\rho)}\log|Du-Dl|,
\end{split}\end{equation*}
since $p_0\geq 2$ and therefore $\frac{p_0}{p_0-1}(p_2-1)\leq p_1+2\omega_p(\rho)$.
Plugging this into \eqref{II3},
we apply Lemma \ref{importanthigher} with $\gamma=1$ and $\theta=\frac{1}{4}$
to deduce
\begin{equation*}\begin{split}
\uppercase\expandafter{\romannumeral1}_3&\leq cv(\rho)^{\frac{1}{p_0}}
\Big( \ \dashint_{Q_{\rho/4}^{(\lambda)}}|Du-Dl|^{p_0}\,\mathrm{d}z\Big)^{\frac{p_0-1}{p_0}}
\\&\quad+cv(\rho)^{\frac{1}{p_0}}
\Big( \ \dashint_{Q_{\rho/4}^{(\lambda)}}\omega_p(\rho)(1+|Du|+|Dl|)^{p_0(1+2\omega_p(\rho))}\log(1+|Du|+|Dl|)
\,\mathrm{d}z\Big)^{\frac{p_0-1}{p_0}}
\\&\leq c(1+|Dl|)^{p_0-1}v(\rho)^{\frac{1}{p_0}}
\Phi^{(\lambda)}\Big(\frac{\rho}{4}\Big)^{\frac{p_0-1}{p_0}}+cv(\rho)^{\frac{1}{p_0}}\Big[
\omega_p(\rho)\log\Big(\frac{1}{\rho}\Big)(1+|Dl|)^{p_0}\Big]^{\frac{p_0-1}{p_0}}.
\end{split}\end{equation*}
From the Caccioppoli's inequality \eqref{Cac}, we see that $\Phi^{(\lambda)}\Big(\frac{\rho}{4}\Big)\leq c\Psi_\lambda^*(\rho)$.
We now use the Young's inequality with exponents $p_0$ and $\frac{p_0}{p_0-1}$ to obtain
\begin{equation*}\begin{split}
\uppercase\expandafter{\romannumeral1}_3&\leq c\Big(v(\rho)+\omega_p(\rho)\log\Big(\frac{1}{\rho}\Big)+\Psi_\lambda^*(\rho)\Big)
(1+|Dl|)^{p_0-1}
\leq c\Psi_\lambda^*(\rho)(1+|Dl|)^{p_0-1},
\end{split}\end{equation*}
where we used \eqref{hybrid excess} in the last step.
The estimates above yield that
\begin{equation}\label{linearI}
\uppercase\expandafter{\romannumeral1}\leq c\Psi_\lambda^*(\rho)(1+|Dl|)^{p_0-1}.
\end{equation}
Next, we consider the estimate for $\uppercase\expandafter{\romannumeral2}$.
To this end, we use the continuity condition \eqref{Az1z2} to obtain
\begin{equation*}\begin{split}
\uppercase\expandafter{\romannumeral2}&\leq
\ \dashint_{Q_{\rho/4}^{(\lambda)}}|(\mu)_{Q_{\rho/4}^{(\lambda)}}|
|(A(z,Du)-A(0,Du))||D\varphi|\,\mathrm{d}z
\\&\leq L\ \dashint_{Q_{\rho/4}^{(\lambda)}}\omega_p(\rho)\Big[(1+|Du|)^{p_0-1}+
(1+|Du|)^{p(z)-1}\Big]
\Big(1+\log(1+|Du|)\Big)\,\mathrm{d}z
\\&\leq L\ \dashint_{Q_{\rho/4}^{(\lambda)}}\omega_p(\rho)(1+|Du|)^{p_2-1}\log(e+|Du|)\,\mathrm{d}z
=:\uppercase\expandafter{\romannumeral2}_1,
\end{split}\end{equation*}
since $\sqrt{\nu}\leq \mu(z)\leq \sqrt{L}$.
Noting that
$\frac{p_0}{p_2-1}\geq1$, we use the H\"older's inequality and Lemma \ref{importanthigher} with $\gamma=\frac{p_0}
{p_2-1}$ and $\theta=\frac{1}{4}$ to obtain
\begin{equation*}\begin{split}
\uppercase\expandafter{\romannumeral2}_1&\leq
L\omega_p(\rho)\Big(\ \dashint_{Q_{\rho/4}^{(\lambda)}}(1+|Du|)^{p_0}\log^{\frac{p_0}{p_2-1}}(e+|Du|)\,\mathrm{d}z\Big)^{\frac{p_2-1}{p_0}}
\\&\leq c\omega_p(\rho)\Big[\log\Big(\frac{1}{\rho}\Big)^{\frac{p_0}{p_2-1}}(1+|Dl|)^{p_0}\Big]^{\frac{p_2-1}{p_0}}
\\&\leq c(1+|Dl|)^{p_2-1}\omega_p(\rho)\log\Big(\frac{1}{\rho}\Big)\leq
c(1+|Dl|)^{p_0-1}\omega_p(\rho)\log\Big(\frac{1}{\rho}\Big),
\end{split}\end{equation*}
where we used Lemma \ref{lambda} (3) in the last step.
From the estimates above, we conclude that
\begin{equation}\label{linearII}
\uppercase\expandafter{\romannumeral2}\leq c \Psi_\lambda^*(\rho) (1+|Dl|)^{p_0-1}.
\end{equation}
Finally, we come to the estimate of $\uppercase\expandafter{\romannumeral3}$. Recalling that $\sqrt{\nu}\leq \mu(z)\leq \sqrt{L}$,
we use
the continuity condition \eqref{DAz1z2} to deduce
\begin{equation*}\begin{split}
|\uppercase\expandafter{\romannumeral3}|&\leq \sqrt{L}\ \dashint_{Q_{\rho/4}^{(\lambda)}}
\Big|
\big\langle\int_0^1[\partial_wA(0,Dl+s(Du-Dl))-\partial_wA(0,Dl)]
\cdot (Du-Dl)\,\mathrm{d}s,D\varphi\big\rangle\Big|\,\mathrm{d}z
\\&\leq c\ \dashint_{Q_{\rho/4}^{(\lambda)}}\int_0^1\omega_a\Big(\frac{s|Du-Dl|}{1+|Dl+s(Du-Dl)|+|Dl|}\Big)
(1+|Dl|+|Du-Dl|)^{p_0-2}\,\mathrm{d}s
|Du-Dl|\,\mathrm{d}z.
\end{split}\end{equation*}
Noting that $s\in(0,1)$, we conclude from the arguments in \cite[page 1808]{M} that
\begin{equation*}
1+|Dl+s(Du-Dl)|+|Dl|\geq\frac{s}{2}(1+|Dl|+|Du-Dl|),
\end{equation*}
and this leads us to
\begin{equation*}\begin{split}
|\uppercase\expandafter{\romannumeral3}|&
\leq c(1+|Dl|)^{p_0-1}\ \dashint_{Q_{\rho/4}^{(\lambda)}}\omega_a\Big(\frac{|Du-Dl|}{1+|Dl|}\Big)
\Big(\frac{|Du-Dl|}{1+|Dl|}+\Big(\frac{|Du-Dl|}{1+|Dl|}\Big)^{p_0-1}\Big)
\,\mathrm{d}z.
\end{split}\end{equation*}
Recalling that
the Caccioppoli's inequality \eqref{Cac} implies $\Phi^{(\lambda)}\Big(\frac{\rho}{4}\Big)\leq c\Psi_\lambda^*(\rho)$,
we now proceed along the lines of the arguments in \cite[page 1808-1809]{M} to conclude that
\begin{equation}\begin{split}\label{linearIII}
|\uppercase\expandafter{\romannumeral3}|&
\leq c(1+|Dl|)^{p_0-1}\Big(\Psi^*_\lambda(\rho)+
\omega_a^{\frac{1}{2}}
\Big(\sqrt{\Psi^*_\lambda(\rho)}
\Big)\sqrt{\Psi^*_\lambda(\rho)}\ \Big).
\end{split}\end{equation}
Combining \eqref{linearI}, \eqref{linearII} and \eqref{linearIII}, we obtain the desired estimate \eqref{linearity}.
The proof of the lemma is now complete.
\end{proof}
With the help of Lemma \ref{linearity lemma} we can now establish a decay estimate
in terms of the hybrid excess functional and the following proposition is our main result in this section.
\begin{proposition}\label{decay proposition} Let $Q_\rho^{(\lambda)}(z_0)\subset\Omega_T$
be a scaled parabolic cylinder on which the intrinsic coupling
$\lambda\leq1+|Dl_{z_0,\ \rho}^{(\lambda)}|\leq M\lambda$ holds
and let $\rho_0>0$ be the radius in Lemma \ref{importanthigher}. Assume that $\Phi^{(\lambda)}(z_0,\rho,l)\leq\frac{1}{16}$
and $\rho<\rho_0$.
For any fixed $\theta<2^{-\frac{1}{2}\gamma_2-6}$, there exists
a constant $\epsilon_1=\epsilon_1(n,N,L,\nu,\gamma_2,E,M,\theta)$ such that
if smallness conditions
\begin{equation}\label{smallness conditions}
\Psi^*_\lambda(z_0,\rho,l_{z_0,\ \rho}^{(\lambda)})\leq \epsilon_1\qquad\text{and}\qquad
\Phi^{(\lambda)}(z_0,\rho,l_{z_0,\ \rho}^{(\lambda)})\leq\frac{1}{16}
\end{equation}
are satisfied, then there exists a
new scaling factor $\lambda_1\in[\frac{1}{2}\lambda,2M\lambda]$ such that
$1+|D l_{z_0,\ \rho}^{(\lambda_1)}|=\lambda_1$ and the following decay estimate holds:
\begin{equation}\label{decay estimate}
\Psi_{\lambda_1}(z_0,\theta\rho,l_{z_0,\ \theta\rho}^{(\lambda_1)})\leq c_*\theta^2
\Psi^*_\lambda(z_0,\rho,l_{z_0,\ \rho}^{(\lambda)}),\end{equation}
where the constant $c_*$ depends only upon  $n$, $N$, $\nu$, $L$, $\gamma_2$, $E$ and $M$.
\end{proposition}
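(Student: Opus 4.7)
The plan is to run the standard $A$-caloric approximation machinery, but adapted to the intrinsic geometry and the non-standard growth. First I would introduce the rescaled map $\tilde u(y,\tau)=[u(x_0+y,t_0+\lambda^{2-p_0}\tau)-l(x_0+y)]/[(1+|Dl|)\rho]$ on the unit cylinder $Q_1$ and write $\tilde{\mathcal{A}}:=(1+|Dl|)^{2-p_0}(\mu)_{Q_{\rho/4}^{(\lambda)}(z_0)}\partial_w A(z_0,Dl)$ for the natural linearized operator. Using Lemma~\ref{lambda} and the structure conditions \eqref{A}, the bilinear form $\tilde{\mathcal{A}}$ has ellipticity constants depending only on $\nu,L,\gamma_2,M$, since $\lambda\leq 1+|Dl|\leq M\lambda$. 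In these scaled variables, Lemma~\ref{linearity lemma} translates into the statement that $\tilde u$ is approximately $\tilde{\mathcal{A}}$-caloric with defect controlled by $\Psi^*_\lambda(z_0,\rho,l_{z_0,\rho}^{(\lambda)})+\hat\omega_a^{1/2}(\sqrt{\Psi^*_\lambda})\sqrt{\Psi^*_\lambda}$, which is small by the hypothesis $\Psi^*_\lambda\leq\epsilon_1$.

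Next I would invoke an $A$-caloric approximation lemma (of the type in \cite{DMS,DH,M}) with a tolerance $\delta=\delta(\theta,n,N,\nu,L,\gamma_2,M)$ to produce an exact $\tilde{\mathcal{A}}$-caloric function $h$ on $Q_{1/4}$ satisfying both an $L^2$- and an $L^{p_0}$-closeness to $\tilde u$. Here $\epsilon_1$ is chosen so small that the defect is below the $A$-caloric threshold; this forces the dependence of $\epsilon_1$ on $\theta$ through $\delta$. By the classical a priori estimates for constant-coefficient linear parabolic systems, $h$ obeys a quadratic Taylor-type excess decay on $Q_{2\theta}$: there is an affine map $\bar l$ with $\sup_{Q_{2\theta}}|h-\bar l|^2\leq c\theta^{4}\|h\|^2_{L^2(Q_{1/4})}$, and $\|h\|_{L^2(Q_{1/4})}$ is itself controlled by $\|\tilde u\|_{L^2}\leq\sqrt{\Psi_\lambda(z_0,\rho,l_{z_0,\rho}^{(\lambda)})}$.

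Rescaling back and using the triangle inequality, this yields an affine function $\tilde l$, built from $l_{z_0,\rho}^{(\lambda)}$ plus a small perturbation of the gradient, such that
\begin{equation*}
\dashint_{Q_{\theta\rho}^{(\lambda)}(z_0)}\Big|\frac{u-\tilde l}{\theta\rho(1+|Dl|)}\Big|^2+\Big|\frac{u-\tilde l}{\theta\rho(1+|Dl|)}\Big|^{p_0}\,\mathrm dz\leq c\theta^{2}\Psi^*_\lambda(z_0,\rho,l_{z_0,\rho}^{(\lambda)}).
\end{equation*}
The extra factor $\theta^{-2}$ lost in rescaling the $L^2$ term of $\Psi_\lambda$ is exactly what motivates the threshold $\theta<2^{-\gamma_2/2-6}$; the $L^{p_0}$ term uses the $L^\infty$ bound on $h-\bar l$ and the $L^2\hookrightarrow L^{p_0}$ upgrade provided by the boundedness of $h$. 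Finally I would use the quasi-minimality of $l_{z_0,\theta\rho}^{(\lambda_1)}$, exactly as in the proof of Proposition~\ref{start}, to replace $\tilde l$ by $l_{z_0,\theta\rho}^{(\lambda_1)}$ on the left-hand side, producing the claimed bound \eqref{decay estimate}.

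For the new scaling factor $\lambda_1$ I would argue as in \cite{DH,M}: define the continuous map $\lambda\mapsto 1+|Dl_{z_0,\theta\rho}^{(\lambda)}|$; from the previous step the gradient of $\tilde l$ differs from $Dl_{z_0,\rho}^{(\lambda)}$ by at most $c\sqrt{\Psi^*_\lambda}(1+|Dl|)\leq c\sqrt{\epsilon_1}\lambda$, so on the interval $[\lambda/2,2M\lambda]$ the function $\lambda'\mapsto(1+|Dl_{z_0,\theta\rho}^{(\lambda')}|)-\lambda'$ changes sign, and an intermediate-value argument produces $\lambda_1$ with $1+|Dl_{z_0,\theta\rho}^{(\lambda_1)}|=\lambda_1$. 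The main obstacle I foresee is calibrating $\epsilon_1$ (and hence the tolerance in the $A$-caloric step) so that simultaneously (i) the linearization error stays below the approximation threshold, (ii) the intrinsic coupling $\lambda_1\in[\lambda/2,2M\lambda]$ is preserved, and (iii) the restriction $\theta<2^{-\gamma_2/2-6}$ absorbs the non-standard $p_0$-loss arising when one converts an $L^2$-decay for $h$ into an $L^{p_0}$-decay for $u$; getting all three compatible without circular dependence on $\theta$ is where the non-standard growth genuinely complicates the standard argument.
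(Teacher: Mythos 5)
Your proposal has the same overall skeleton as the paper's proof (rescale, verify the linearized form $A$ is uniformly elliptic via the intrinsic coupling, show approximate $A$-caloricity from Lemma~\ref{linearity lemma}, invoke an $A$-caloric approximation, pull back the interior decay of the caloric comparison map, and close the new coupling $\lambda_1$ by an intermediate-value argument). But there is a genuine gap in the way you propose to control the $L^{p_0}$-part of the excess, and it sits exactly at the point you yourself flag as the ``main obstacle.''

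You rescale $u-l$ only by $(1+|Dl|)\rho$, and assert that the $L^{p_0}$-part of the decay follows from an $L^\infty$ bound on $h-\bar l$ together with an ``$L^2\hookrightarrow L^{p_0}$ upgrade.'' That argument works fine for the smooth $A$-caloric comparison map $h$, but it says nothing about the closeness $\dashint|\tilde u-h|^{p_0}$: an $L^2$-bound on $\tilde u-h$ does not control its $L^{p_0}$-norm for $p_0>2$, and there is no boundedness available for $\tilde u-h$. The paper resolves precisely this issue by normalizing $v=(u-l)/\bigl(c_1(1+|Dl|)\sqrt{\Psi^*_\lambda}\bigr)$, setting $\gamma=\sqrt{\Psi^*_\lambda}\le 1$, and working with the $\gamma^{p-2}$-weighted version of the $A$-caloric approximation (Lemma~\ref{caloric}): the hypothesis $\dashint\gamma^{p_0-2}\bigl(|v/\rho|^{p_0}+|Dv|^{p_0}\bigr)\leq 1$ is exactly what the Caccioppoli inequality delivers after this normalization, and the conclusion gives $\dashint\gamma^{p_0-2}|(v-f)/(\rho/2)|^{p_0}\leq\epsilon$, which is the $L^{p_0}$-closeness you need when you undo the normalization. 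Without this $\gamma^{p_0-2}$ bookkeeping in both the hypothesis and the conclusion, the $L^{p_0}$-part of $\Psi_{\lambda_1}(\theta\rho,\cdot)$ cannot be made $\le c_*\theta^2\Psi^*_\lambda$.

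Two secondary remarks. First, the restriction $\theta<2^{-\gamma_2/2-6}$ is not there to ``absorb a non-standard $p_0$-loss'' in the $L^2\to L^{p_0}$ step; it simply guarantees $Q_{\theta\rho}^{(\lambda_1)}\subset Q_{\tilde\theta\rho/8}^{(\lambda)}\subset Q_\rho^{(\lambda)}$ uniformly for every candidate $\lambda_1\in[\tfrac12\lambda,2M\lambda]$, which is what makes both the comparison against $l^{(\lambda)}_{\tilde\theta\rho/8}$ and the intermediate-value construction of $\lambda_1$ legitimate. Second, the paper writes the constant-coefficient form as $A=(\mu)_{Q_{\rho/4}^{(\lambda)}}\lambda^{2-p_0}\partial_wA(z_0,Dl)$ rather than $(1+|Dl|)^{2-p_0}\partial_wA(z_0,Dl)$; the two only differ by a factor in $[M^{2-\gamma_2},1]$ so this is harmless, but it affects which of $\nu$, $\nu_1$ appears in the ellipticity bound.
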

\begin{proof}Without loss of generality, we may take $z_0=0$. Once again, we set $p_0=p(z_0)$,
$p_1=\inf _{Q_\rho^{(\lambda)}(z_0)}p(z)$ and $p_2=\sup _{Q_\rho^{(\lambda)}(z_0)}p(z)$.
For simplicity, we abbreviate
\begin{equation*}\Psi_\lambda(\rho):=\Psi_\lambda(z_0,\rho,l_{z_0,\ \rho}^{(\lambda)})\qquad
\text{and}\qquad\Psi^*_\lambda(\rho):=\Psi^*_\lambda(z_0,\rho,l_{z_0,\ \rho}^{(\lambda)}).\end{equation*}
We first observe that the assumption $\theta<2^{-\frac{1}{2}\gamma_2-6}$
guarantees $Q_{\theta\rho}^{(\lambda_1)}\subset Q_\rho^{(\lambda)}$
for any fixed $\lambda_1\in[\frac{1}{2}\lambda,2M\lambda]$. In order to prove \eqref{decay estimate},
we only need to consider the case $\Psi^*_\lambda(\rho)>0$
due to the quasi-minimality of the affine map $l_{z_0,\ \theta\rho}^{(\lambda_1)}$. This enables us to
define an auxiliary map $v$ via
\begin{equation*}
v(x,t)=\frac{u(x,\lambda^{2-p_0}t)-l_{\rho}^{(\lambda)}}{c_1(1+|Dl_{\rho}^{(\lambda)}|)\sqrt{\Psi^*_\lambda(\rho)}}
\end{equation*}
for $(x,t)\in Q_\rho:= Q_\rho^{(1)}$ and
$c_1\geq1$ is to be determined later. Moreover, we
set $\gamma=\sqrt{\Psi^*_\lambda(\rho)}$.
Initially, we choose $\epsilon_1<1$ and this ensures that
$\gamma<1$.
By a change of variable, we observe from $p_0\geq 2$ that
\begin{equation*}\begin{split}
\dashint_{Q_{\rho/4}}\gamma^{p_0-2}\Big|\frac{v}{\rho}\Big|^{p_0}\,\mathrm{d}z&\leq\frac{1}{\gamma^2c_1^2}
\ \dashint_{Q_{\rho/4}^{(\lambda)}}\Big|\frac{u-l_{\rho}^{(\lambda)}}{\rho(1+|Dl_{\rho}^{(\lambda)}|)}\Big|^{p_0}\,\mathrm{d}z
\leq \frac{c}{c_1^2\Psi^*_\lambda(\rho)}\Psi_\lambda(\rho)\leq\frac{c}{c_1^2},
\end{split}\end{equation*}
where the constant $c$ depending only upon $n$ and $\gamma_2$.
The next thing to do in the proof is to verify that the map $v$ is approximatively $A$-caloric in the sense of \eqref{A caloric}.
From the Caccioppoli's inequality \eqref{Cac}, we deduce
\begin{equation*}\begin{split}
\sup_{t\in\Lambda_{\rho/4}}\ \dashint_{B_{\rho/4}}\Big|\frac{v(\cdot,t)}{\rho}\Big|^2\,\mathrm{d}x
&\leq \frac{\lambda^{2-p_0}}{c_1^2\gamma^2(1+|Dl_{\rho}^{(\lambda)}|)^2}
\sup_{t\in\Lambda_{\rho/4}^{(\lambda)}}\ \dashint_{B_{\rho/4}}\frac{|u(\cdot,t)-l_{\rho}^{(\lambda)}|^2}{\lambda^{2-p_0}
\rho^2}\,\mathrm{d}x
\\&\leq \frac{c_{Cacc}(1+|Dl_{\rho}^{(\lambda)}|)^{p_0-2}}{c_1^2\gamma^2\lambda^{p_0-2}}\Psi^*_\lambda(\rho)
\leq \frac{c(c_{Cacc}, M,\gamma_2)}{c_1^2},
\end{split}\end{equation*}
where $c_{Cacc}$ is the constant in the inequality \eqref{Cac}.
Next, we use the Caccioppoli's inequality \eqref{Cac} again to obtain
\begin{equation*}\begin{split}
\dashint_{Q_{\rho/4}}|Dv|^2+\gamma^{p_0-2}|Dv|^{p_0}\,\mathrm{d}z
&\leq \frac{1}{c_1^2\gamma^2}\Phi^{(\lambda)}\Big(0,\frac{\rho}{4},l_{\rho}^{(\lambda)}\Big)
\leq \frac{c_{Cacc}}{c_1^2\Psi^*_\lambda(\rho)}\Psi_\lambda^*(\rho)=\frac{c_{Cacc}}{c_1^2}.
\end{split}\end{equation*}
Therefore, we conclude from the above estimates that there exists a constant $\bar c$ depending only upon
$n$, $N$, $\nu$, $L$, $\gamma_2$, $E$ and $M$ such that
\begin{equation*}\begin{split}
\sup_{-\frac{1}{16}\rho^2<t<0}\ \dashint_{B_\rho}\left|\frac{v}{\rho/4}\right|^2\,\mathrm {d}x&
+\ \dashint_{Q_{\rho/4}}|Dv|^2\,\mathrm {d}z+\ \dashint_{Q_{\rho/4}}
\gamma^{p_0-2}\left(\ \left|\frac{v}{\rho/4}\right|^{p_0}+|Dv|^{p_0}\right)\,\mathrm {d}z
\leq\frac{\bar c}{c_1^2}\leq1,
\end{split}\end{equation*}
provided that the constant $c_1$ can be chosen so large that $c_1\geq \sqrt{\bar c}$.
Furthermore,
let us now introduce the vector field $A\in \mathbb{R}^N\times\mathbb{R}^{N\times n}$ with constant coefficients
\begin{equation*}A:=(\mu)_{Q_{\rho/4}^{(\lambda)}}\lambda^{2-p_0}
\partial_wA(0,Dl_{\rho}^{(\lambda)}).\end{equation*}
Another step in the proof is to check that $A$ satisfies ellipticity and growth conditions similar to \eqref{linearA}.
We first apply the ellipticity condition \eqref{A}$_1$, $\sqrt{\nu}\leq \mu(z)\leq \sqrt{L}$,
$p_0\geq2$ and $\lambda\leq 1+|Dl_{\rho}^{(\lambda)}|$ to deduce that for any $\tilde w\in\mathbb{R}^{N\times n}$ there holds
\begin{equation*}\begin{split}\big\langle A\tilde w,\tilde w \big \rangle
&=(\mu)_{Q_{\rho/2}^{(\lambda)}}\lambda^{2-p_0}\big\langle \partial_wA(0,Dl_{\rho}^{(\lambda)})\tilde w,\tilde w \big \rangle
\\&\geq\nu \lambda^{2-p_0}(1+|Dl_{\rho}^{(\lambda)}|)^{p_0-2}|\tilde w|^2\geq \nu|\tilde w|^2.
\end{split}\end{equation*}
Moreover, for any $w,\tilde w\in\mathbb{R}^{N\times n}$, we use the growth condition \eqref{A}$_2$, $\sqrt{\nu}\leq \mu(z)\leq \sqrt{L}$,
$p_0\geq2$ and $1+|Dl_{\rho}^{(\lambda)}|\leq M\lambda$
to find that there exists a constant $\tilde c$ depending only upon
$L$, $\gamma_2$ and $M$ such that the following inequality holds:
\begin{equation*}\begin{split}
|\big\langle Aw,\tilde w \big \rangle|&=(\mu)_{Q_{\rho/2}^{(\lambda)}}\lambda^{2-p_0}
|\big\langle \partial_wA(0,Dl_{\rho}^{(\lambda)})\tilde w,\tilde w \big \rangle|
\\&\leq L \lambda^{2-p_0}(1+|Dl_{\rho}^{(\lambda)}|)^{p_0-2}|w||\tilde w|\leq \tilde c|w||\tilde w|.
\end{split}\end{equation*}
To proceed further, we
define a rescaled test function $\tilde \varphi\in C_0^\infty(Q_{\rho/4}^{(\lambda)},\mathbb{R}^N)$ by
$\tilde \varphi(x,\lambda^{2-p_0}t)=\varphi(x,t)$. By the chain rule, we conclude from Lemma \ref{linearity lemma}
with $l=l_{\rho}^{(\lambda)}$
that there exists a constant $\hat c$ depending only upon
$n$, $N$, $\nu$, $L$, $\gamma_2$, $E$ and $M$ such that the inequality
\begin{equation*}\begin{split}
&\Big|\ \dashint_{Q_{\rho/4}}v\cdot\partial_t\varphi
-\big\langle ADv,D\varphi \big \rangle\,\mathrm {d}z\Big|
\\&=\frac{1}{c_1\gamma(1+|Dl_{\rho}^{(\lambda)}|)\lambda^{p_0-2}}
\Big|\ \dashint_{Q_{\rho/4}^{(\lambda)}}(u-l_{\rho}^{(\lambda)})\cdot\partial_t\tilde\varphi
-(\mu)_{Q_{\rho/4}^{(\lambda)}}\big\langle \partial_wA(0,Dl_{\rho}^{(\lambda)})\cdot
(Du-Dl_{\rho}^{(\lambda)}),D\tilde\varphi \big \rangle\,\mathrm {d}z\Big|
\\&\leq \frac{c(1+|Dl_{\rho}^{(\lambda)}|)^{p_0-2}}{c_1\gamma\lambda^{p_0-2}}\Big[\Psi^*_\lambda(\rho)+
\omega_a^{\frac{1}{2}}
\Big(\sqrt{\Psi^*_\lambda(\rho)}
\Big)\sqrt{\Psi^*_\lambda(\rho)}\ \Big]
\sup_{Q_{\rho/4}^{(\lambda)}}|D\tilde\varphi|
\\&\leq \hat c\Big[\sqrt{\Psi^*_\lambda(\rho)}+
\omega_a^{\frac{1}{2}}
\Big(\sqrt{\Psi^*_\lambda(\rho)}
\Big)\ \Big]
\sup_{Q_{\rho/4}^{(\lambda)}}|D\tilde\varphi|
\leq \hat c\Big[\sqrt{\epsilon_1}+
\omega_a^{\frac{1}{2}}
\Big(\sqrt{\epsilon_1}
\Big)\ \Big]
\sup_{Q_{\rho/4}}|D\varphi|
\end{split}\end{equation*}
holds. Our task now is to apply Lemma \ref{caloric} to find an $A$-caloric map $f$ which approximate the
map $v$ in the sense of \eqref{ap}. To this end, we fix $\epsilon\in(0,1)$ which will be
determined in the course of the proof. Moreover, let
$\delta_0=\delta_0(n,N,\gamma_2,\nu,\tilde c,\epsilon)$ be the constant determined by Lemma \ref{caloric} with $\lambda=\nu$
and $\Lambda=\tilde c$. We now impose a condition that $\epsilon_1\leq \tilde\epsilon$ where $\tilde \epsilon>0$ satisfies
\begin{equation}\label{epsilontilde}
\hat c\Big[\sqrt{\tilde \epsilon}+
\omega_a^{\frac{1}{2}}
\Big(\sqrt{\tilde \epsilon}
\Big)\ \Big]\leq \delta_0.
\end{equation}
At this stage, we see that the hypotheses of Lemma \ref{caloric} are fulfilled
and we can apply Lemma \ref{caloric} with $w=v$. Therefore, we conclude the existence of an $A$-caloric map
$f\in C^\infty(Q_{\rho/8},\mathbb{R}^N)$ satisfying
\begin{equation}\label{caloric1}\dashint_{Q_{\rho/8}}\Big|\frac{f}{\rho/8}\Big|^2+|Df|^2\,\mathrm {d}z
+\ \ \dashint_{Q_{\rho/8}}\gamma^{p_0-2}\left(\ \Big|\frac{f}{\rho/8}\Big|^{p_0}+|Df|^{p_0}
\right)\,\mathrm {d}z\leq c(n,\gamma_2,\nu,
L)
\end{equation}
and
\begin{equation}\label{caloric2}\dashint_{Q_{\rho/8}}\Big|\frac{v-f}{\rho/8}\Big|^2\,\mathrm {d}z
+\gamma^{p_0-2}\ \Big|\frac{v-f}{\rho/8}\Big|^{p_0}\,\mathrm {d}z\leq \epsilon.
\end{equation}
We have established the existence of the $A$-caloric map,
and now we further study its decay estimate.
To start with, we set $\tilde \theta=2^{\frac{1}{2}\gamma_2+2}\theta$.
It follows from $\theta<2^{-\frac{1}{2}\gamma_2-6}$
that $\tilde\theta<\frac{1}{4}$. We now apply \eqref{caloric1} and \cite[Lemma 2.7]{DH}
for $s=2$ or $s=p_0$ to deduce that there exists a constant $c_{pa}$ depending only upon
$n$, $N$, $\nu$, $L$ and $\gamma_2$ such that the inequality
\begin{equation*}\begin{split}
\gamma^{s-2}&\Big(\frac{\tilde\theta\rho}{8}\Big)^{-s}\ \dashint_{Q_{\tilde \theta\rho/8}}
|f-(f)_{Q_{\tilde \theta\rho/8}}-
(Df)_{Q_{\tilde \theta\rho/8}}\cdot x|^s\,\mathrm{d}z
\\&\leq c_{pa}\gamma^{s-2}\tilde \theta^s
\Big(\frac{\rho}{8}\Big)^{-s}\ \dashint_{Q_{\rho/8}}
|f-(f)_{Q_{\rho/8}}-
(Df)_{Q_{\rho/8}}\cdot x|^s\,\mathrm{d}z
\\&\leq c_{pa}\tilde \theta^s\Big(\gamma^{s-2}\ \dashint_{Q_{\rho/8}}\Big|\frac{f}{\rho/8}\Big|^s\,\mathrm{d}z+
\gamma^{s-2}\ \dashint_{Q_{\rho/8}}|Df|^{s}\,\mathrm{d}z\Big)\leq c_{pa}\tilde \theta^s
\end{split}\end{equation*}
holds for either $s=2$ or $s=p_0$.
Furthermore, we conclude from the above estimate and \eqref{caloric2} that for $s=2$ or $s=p_0$ there holds
\begin{equation}\begin{split}\label{first decay}
\gamma^{s-2}&\Big(\frac{\tilde\theta\rho}{8}\Big)^{-s}\ \dashint_{Q_{\tilde \theta\rho/8}}
|v-(f)_{Q_{\tilde \theta\rho/8}}-
(Df)_{Q_{\tilde \theta\rho/8}}\cdot x|^s\,\mathrm{d}z
\\&\leq 2^{s-1}\gamma^{s-2}\Big(\frac{\tilde\theta\rho}{8}\Big)^{-s}\ \dashint_{Q_{\tilde \theta\rho/8}}|v-f|^s\,\mathrm{d}z
\\&\qquad+2^{s-1}\gamma^{s-2}\Big(\frac{\tilde\theta\rho}{8}\Big)^{-s}\ \dashint_{Q_{\tilde \theta\rho/8}}
|f-(f)_{Q_{\tilde \theta\rho/8}}-
(Df)_{Q_{\tilde \theta\rho/8}}\cdot x|^s\,\mathrm{d}z
\\&\leq c\tilde \theta^{-(n+2+s)}
\Big(\frac{\rho}{8}\Big)^{-s}\gamma^{s-2}\ \dashint_{Q_{\rho/8}}|v-f|^s\,\mathrm{d}z+c\tilde \theta^s
\leq c\tilde \theta^{-(n+2+s)}\epsilon+c\tilde \theta^s.
\end{split}\end{equation}
At this stage, we choose $\epsilon=\tilde \theta^{n+2+2\gamma_2}$ and therefore $\epsilon$ depends only upon $\theta$
and $\gamma_2$. The choice of $\epsilon$ determines the value of $\delta_0$ in dependence of $n$, $N$, $\gamma_2$, $\nu$, $L$ and $\theta$.
Moreover, the constant $\tilde\epsilon$ is fixed via \eqref{epsilontilde}. We now proceed to estimate \eqref{first decay} and obtain
\begin{equation}\begin{split}\label{v decay}
&\Big(\frac{\tilde\theta\rho}{8}\Big)^{-s}\ \dashint_{Q_{\tilde \theta\rho/8}}
|v-(f)_{Q_{\tilde \theta\rho/8}}-
(Df)_{Q_{\tilde \theta\rho/8}}\cdot x|^s\,\mathrm{d}z\leq c\tilde \theta^s \gamma^{2-s}.
\end{split}\end{equation}
Recalling the definition of the map $v$, we rescale back to $u$ in \eqref{v decay} and this yields that for $s=2$ or $s=p_0$ there holds
\begin{equation*}\begin{split}
\Big(\frac{\tilde\theta\rho}{8}\Big)^{-s}&\ \dashint_{Q_{\tilde \theta\rho/8}^{(\lambda)}}
\Big|u-l_{\rho}^{(\lambda)}-c_1(1+|Dl_{\rho}^{(\lambda)}|)\gamma[
(f)_{Q_{\tilde \theta\rho/8}}-
(Df)_{Q_{\tilde \theta\rho/8}}\cdot x]\Big|^s\,\mathrm{d}z
\\&\leq c(1+|Dl_{\rho}^{(\lambda)}|)^s\gamma^s\tilde \theta^s \gamma^{2-s}\leq c\lambda^s\tilde \theta^s\Psi_\lambda^*(\rho).
\end{split}\end{equation*}
A careful examination of the proof of
\cite[Lemma 2.8]{BDM}
ensures the existence of a constant
$\hat c_1\geq 1$ depending only upon $n$, $N$, $\nu$, $L$, $\gamma_2$ and $M$ such that
the inequality
\begin{equation}\begin{split}\label{main decay}
\Big(\frac{\tilde\theta\rho}{8}\Big)^{-s}&\ \dashint_{Q_{\tilde \theta\rho/8}^{(\lambda)}}
|u-l_{\tilde \theta\rho/8}^{(\lambda)}|^s\,\mathrm{d}z
\leq \hat c_1\lambda^s\tilde \theta^s\Psi_\lambda^*(\rho)
\end{split}\end{equation}
holds for either $s=2$ or $s=p_0$. Our task now is to find a scaling factor $\lambda_1\in[\frac{1}{2}\lambda,2M\lambda]$ such that
$1+|D l_{\rho}^{(\lambda_1)}|=\lambda_1$.
For $\lambda_*\in[\frac{1}{2}\lambda,2M\lambda]$, we have $(\theta\rho)^2\lambda_*^{2-p_0}
\leq(\frac{1}{8}
\tilde\theta\rho)^2\lambda^{2-p_0}$, since $p_0\geq 2$ and $\tilde \theta=2^{\frac{1}{2}\gamma_2+2}\theta$.
This implies that $Q_{\theta\rho}^{(\lambda_*)}\subset
Q_{\tilde \theta\rho/8}^{(\lambda)}\subset Q_{\rho}^{(\lambda)}$. Therefore, we apply \cite[Lemma 2.5]{M}
with $z_0=0$, $\lambda=\lambda_*$, $A_{z_0,\ \rho}^{(\lambda)}=Dl_{\theta\rho}^{(\lambda_*)}$,
$\xi=l_{\rho}^{(\lambda)}(0)$ and $w=Dl_{\rho}^{(\lambda)}$ to conclude that
there exists a constant $\tilde c_1$ depending only upon $n$, $M$ and $\gamma_2$ such that the following inequality holds
\begin{equation*}\begin{split}
|Dl_{\theta\rho}^{(\lambda_*)}-Dl_{\rho}^{(\lambda)}|^2&\leq \frac{n(n+2)}{(\theta\rho)^2}\ \dashint_{Q_{\theta\rho}^{(\lambda_*)}}
|u-l_{\rho}^{(\lambda)}|^2\,\mathrm{d}z
\\&\leq n(n+2)\theta^{-(n+4)}\Big(\frac{\lambda}
{\lambda_*}\Big)^{2-p_0}\ \dashint_{Q_{\rho}^{(\lambda)}}
\Big|\frac{|u-l_{\rho}^{(\lambda)}|}{\rho}\Big|^2\,\mathrm{d}z
\\&\leq \tilde c_1\theta^{-(n+4)}\lambda^2\Psi_\lambda^*(\rho)\leq \tilde c_1\theta^{-(n+4)}\lambda^2\epsilon_1,
\end{split}\end{equation*}
since $1+|Dl_{\rho}^{(\lambda)}|\leq M\lambda$.
At this point, we assume that $\epsilon_1\leq \frac{1}{4}\tilde c_1^{-1}\theta^{n+4}$
and this yields $|Dl_{\theta\rho}^{(\lambda_*)}-Dl_{\rho}^{(\lambda)}|\leq \frac{1}{2}\lambda$. By the triangle inequality, we observe
from $\lambda\leq1+|Dl_{\rho}^{(\lambda)}|\leq M\lambda$ that
for any $\lambda_*\in[\frac{1}{2}\lambda,2M\lambda]$ there holds
\begin{equation}\label{lambdalu}
1+|Dl_{\theta\rho}^{(\lambda_*)}|\leq 2M\lambda\qquad\text{and}\qquad1+|Dl_{\theta\rho}^{(\lambda_*)}|\geq \frac{1}{2}\lambda.
\end{equation}
To proceed further,
we now define the function $f:[\frac{1}{2}\lambda,2M\lambda]\to \mathbb{R}$ by
$f(\lambda^*)=1+|Dl_{\theta\rho}^{(\lambda_*)}|-\lambda^*$.
From \cite[(2.9)]{BDM}, we find that
\begin{equation*}\begin{split}
 Dl_{\theta\rho}^{(\lambda_*)}=\frac{n+2}{(\theta\rho)^2}\ \dashint_{Q_{\theta\rho}^{(\lambda_*)}}
u\otimes x\,\mathrm{d}z
    \end{split}\end{equation*}
and this implies that $f(\lambda^*)$ is continuous.
From \eqref{lambdalu}, we see that
$f(\frac{1}{2}\lambda)\geq0$ and $f(2M\lambda)\leq0$. By continuity of the function $f(\lambda^*)$,
there exists a scaling factor $\lambda_1\in [\frac{1}{2}\lambda,M\lambda]$
such that $f(\lambda_1)=0$ which leads us to $1+|Dl_{\theta\rho}^{(\lambda_1)}|=\lambda_1$. Recalling that
$Q_{\theta\rho}^{(\lambda_1)}\subset
Q_{\tilde \theta\rho/8}^{(\lambda)}$,
then we conclude from \eqref{main decay} and
the proof of \cite[Lemma 2.8]{BDM} that there exists a constant $\tilde c_2$
depending only upon $n$ and $\gamma_2$ such that the inequality
\begin{equation*}\begin{split}
\dashint_{Q_{\theta\rho}^{(\lambda_1)}}
\Big|\frac{u-l_{\theta\rho}^{(\lambda_1)}}{\theta\rho}\Big|^s\,\mathrm{d}z&\leq \tilde c_2\
\dashint_{Q_{\theta\rho}^{(\lambda_1)}}
\Big|\frac{u-l_{\tilde \theta\rho/8}^{(\lambda)}}{\theta\rho}\Big|^s\,\mathrm{d}z
\\& \leq \tilde c_2\Big(\frac{\tilde \theta}
{8\theta}\Big)^{n+2+s}\Big(\frac{\lambda}
{\lambda_1}\Big)^{2-p_0}\Big(\frac{\tilde\theta\rho}{8}\Big)^{-s}\ \dashint_{Q_{\tilde \theta\rho/8}^{(\lambda)}}
|u-l_{\tilde \theta\rho/8}^{(\lambda)}|^s\,\mathrm{d}z
\\&
\leq 2^{(\frac{1}{2}\gamma_2-1)(n+2+\gamma_2)}
\tilde c_2\hat c_1\lambda^s\tilde \theta^s\Psi_\lambda^*(\rho)\leq
2^{5n\gamma_2^2}\tilde c_2\hat c_1\theta^2(1+|Dl_{\theta\rho}^{(\lambda_1)}|)^s\Psi_\lambda^*(\rho)
\end{split}\end{equation*}
holds for either $s=2$ or $s=p_0$.
Therefore, the desired inequality \eqref{decay estimate} follows with the choice $c_*=2^{5n\gamma_2^2}\tilde c_2\hat c_1$,
provided that the smallness condition $\Psi^*_\lambda(\rho)\leq \epsilon_1$ holds with $\epsilon_1=\min\{1,\tilde\epsilon,
\frac{1}{4}\tilde c_1^{-1}\theta^{n+4}\}$. The proof of the proposition is now complete.
\end{proof}
\section{Proof of partial regularity}
In this section, we give the proof of Theorem \ref{main}. Our proof starts from the estimate \eqref{smalless initial 0}
in Proposition \ref{start}
and we will show that the weak solution to the parabolic system \eqref{parabolic} is H\"older continuous in a
neighborhood of any regular point. The strategy is standard and we will make use of an
integral characterization of parabolic H\"older space due to Campanato and DaPrato (cf. \cite[Theorem 2.10]{M}).
Before proving Theorem \ref{main}, we need the following iteration lemma.
\begin{lemma}\label{iteration lemma}Let $z_0\in\Omega_T$ and $M\geq1$.
Moreover, let $\rho_0>0$ be the radius in Lemma \ref{importanthigher} and let
$c_*$ be the constant in \eqref{decay estimate}. Assume that
$0<\theta<\min\{2^{-
\gamma_2-6},(3c_*)^{-\frac{1}{2}}\}$.
Then, there exist a constant $\epsilon_2=\epsilon_2(n,N,L,\nu,\gamma_2,E,M,\theta)>0$
such that the following holds: If there exists a radius $\rho<\rho_0$
such that the following inequalities
 \begin{equation}\label{smalless initial}
\begin{cases}\,&1+|Dl_{z_0,\ \rho}|\leq M,\\
\,&\Psi(z_0,\rho,l_{z_0,\ \rho})\leq\epsilon_2,\\
\,&\Phi(z_0,\rho,l_{z_0,\ \rho})\leq\frac{1}{16},\\
\,& v(\rho)+\omega_p(\rho)\log\Big(\frac{1}{\rho}\Big)\leq \epsilon_2
	\end{cases}
\end{equation}
hold,
then
there exists a sequence $\{\lambda_j\}_{j\in \mathbb{N}}$ such that for all $j\in\mathbb{N}$ there holds
\begin{equation}\label{smalless jstep}
\begin{cases}
\,&1\leq\lambda_j\leq(2M)^j,\\
\,&\lambda_j\leq 1+|Dl_{z_0,\ \theta^j\rho}^{(\lambda_j)}|\leq M\lambda_j,\tag{$I_j$}\\
\,&\Psi_{\lambda_j}(z_0,\theta^j\rho,l_{z_0,\ \theta^j\rho}^{(\lambda_j)})\leq\epsilon_2,\\
\,&\Phi^{(\lambda_j)}(z_0,\theta^j\rho,l_{z_0,\ \theta^j\rho}^{(\lambda_j)})\leq\frac{1}{16}.
	\end{cases}
\end{equation}
\end{lemma}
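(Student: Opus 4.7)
The plan is to argue by induction on $j$, using Proposition~\ref{decay proposition} at every step to pass from scale $\theta^j\rho$ with scaling factor $\lambda_j$ to scale $\theta^{j+1}\rho$ with a new factor $\lambda_{j+1}$. For the base case $j=0$, set $\lambda_0=1$; then $\Psi_1=\Psi$, $\Phi^{(1)}=\Phi$ and $l^{(1)}_{z_0,\rho}=l_{z_0,\rho}$, so the four conditions of $(I_0)$ follow immediately from the hypothesis \eqref{smalless initial} (using $M\geq 1$ for the upper bound in $(I_0)_1$).

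For the inductive step, assume $(I_j)$ holds. I intend to invoke Proposition~\ref{decay proposition} on the cylinder $Q_{\theta^j\rho}^{(\lambda_j)}(z_0)$ with affine map $l^{(\lambda_j)}_{z_0,\theta^j\rho}$. Of its three assumptions, the coupling $\lambda_j\leq 1+|Dl^{(\lambda_j)}|\leq M\lambda_j$ is $(I_j)_2$ and the smallness $\Phi^{(\lambda_j)}\leq 1/16$ is $(I_j)_4$. For the remaining assumption $\Psi^*_{\lambda_j}\leq\epsilon_1$, decompose
\[
\Psi^*_{\lambda_j}(z_0,\theta^j\rho,l^{(\lambda_j)}_{z_0,\theta^j\rho})=\Psi_{\lambda_j}+v(\theta^j\rho)+\omega_p(\theta^j\rho)\log\bigl(1/(\theta^j\rho)\bigr).
\]
The first term is $\leq\epsilon_2$ by $(I_j)_3$; for the other two, the plan is to fix $\rho$ at the outset small enough that $v(r)+\omega_p(r)\log(1/r)\leq\epsilon_2$ for every $r\leq\rho$, which is possible by monotonicity of $v$ and by the logarithmic continuity \eqref{omegap}. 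Choosing $\epsilon_2$ so small that $3\epsilon_2\leq\epsilon_1$, the hypotheses of Proposition~\ref{decay proposition} are met.

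Proposition~\ref{decay proposition} then yields $\lambda_{j+1}\in[\tfrac12\lambda_j,2M\lambda_j]$ with $1+|Dl^{(\lambda_{j+1})}_{z_0,\theta^{j+1}\rho}|=\lambda_{j+1}$ and the decay
\[
\Psi_{\lambda_{j+1}}(z_0,\theta^{j+1}\rho,l^{(\lambda_{j+1})}_{z_0,\theta^{j+1}\rho})\leq c_*\theta^2\,\Psi^*_{\lambda_j}(z_0,\theta^j\rho,l^{(\lambda_j)}_{z_0,\theta^j\rho}).
\]
From this, $(I_{j+1})_1$ is routine: $\lambda_{j+1}\leq 2M\lambda_j\leq(2M)^{j+1}$ and $\lambda_{j+1}=1+|Dl^{(\lambda_{j+1})}|\geq 1$. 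Condition $(I_{j+1})_2$ is automatic since $1+|Dl^{(\lambda_{j+1})}|=\lambda_{j+1}$ and $M\geq 1$. Condition $(I_{j+1})_3$ follows from the decay estimate combined with $c_*\theta^2\leq\tfrac13$ (ensured by $\theta<(3c_*)^{-1/2}$), which gives $\Psi_{\lambda_{j+1}}\leq c_*\theta^2\cdot 3\epsilon_2\leq\epsilon_2$.

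The main obstacle is $(I_{j+1})_4$, since Proposition~\ref{decay proposition} outputs only a $\Psi$-bound. My plan is to use the Caccioppoli estimate \eqref{Cac} to transfer smallness of $\Psi_{\lambda_{j+1}}$ into smallness of $\Phi^{(\lambda_{j+1})}$. Concretely I would work on the intermediate cylinder $Q_{4\theta^{j+1}\rho}^{(\lambda_{j+1})}(z_0)$, which by the scaling comparison $\lambda_{j+1}\in[\tfrac12\lambda_j,2M\lambda_j]$ and the strong smallness $\theta<2^{-\gamma_2-6}$ is contained in $Q_{\theta^j\rho}^{(\lambda_j)}(z_0)$, so that everything in sight is controlled through $(I_j)$. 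The subtle point is that one must compare the minimising affine map $l^{(\lambda_{j+1})}_{z_0,4\theta^{j+1}\rho}$ to the inductively available $l^{(\lambda_j)}_{z_0,\theta^j\rho}$ in order both to verify the hypotheses of Caccioppoli at scale $4\theta^{j+1}\rho$ and to bound $\Psi^*_{\lambda_{j+1}}$ there by a constant multiple of $\epsilon_2$; this comparison relies on \cite[Lemma 2.5]{M} together with the already established smallness of $\Psi_{\lambda_{j+1}}$. Caccioppoli then produces $\Phi^{(\lambda_{j+1})}(\theta^{j+1}\rho)\leq c\epsilon_2$, and a final choice of $\epsilon_2$ small (depending only on $n,N,L,\nu,\gamma_2,E,M,\theta$) yields $\Phi^{(\lambda_{j+1})}\leq\tfrac{1}{16}$, closing the induction.
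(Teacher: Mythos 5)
Your outline follows essentially the same strategy as the paper: induct on $j$, invoke Proposition~\ref{decay proposition} on $Q^{(\lambda_j)}_{\theta^j\rho}$ to produce $\lambda_{j+1}$ and obtain $(I_{j+1})_1$--$(I_{j+1})_3$, then close the induction at $(I_{j+1})_4$ using the Caccioppoli estimate~\eqref{Cac} together with the change-of-affine-map comparison from \cite[Lemma 2.5]{M}. The base case, the verification that $\Psi^*_{\lambda_j}\leq\epsilon_1$ (via $(I_j)_3$ and smallness of $v+\omega_p\log(1/\cdot)$), and the decay-driven bound $\Psi_{\lambda_{j+1}}\leq c_*\theta^2\cdot 3\epsilon_2\leq\epsilon_2$ all match the paper's argument.

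The place where you deviate, and where your sketch is not quite complete, is in the implementation of $(I_{j+1})_4$. You propose to apply~\eqref{Cac} on the \emph{intermediate} cylinder $Q^{(\lambda_{j+1})}_{4\theta^{j+1}\rho}$ with the locally minimising affine map $l^{(\lambda_{j+1})}_{4\theta^{j+1}\rho}$. This creates two extra obligations that you only partially address. First, the Caccioppoli hypothesis $\Phi^{(\lambda_{j+1})}(4\theta^{j+1}\rho,\,l^{(\lambda_{j+1})}_{4\theta^{j+1}\rho})\leq\tfrac1{16}$ must be verified, which is not available inductively; establishing it already requires the cylinder inclusion into $Q^{(\lambda_j)}_{\frac14\theta^j\rho}$ \emph{and} an invocation of Caccioppoli at scale $\theta^j\rho$ (or something equivalent), since $\Psi$-smallness alone does not control gradients. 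Second, after Caccioppoli at scale $4\theta^{j+1}\rho$ you obtain $\Phi^{(\lambda_{j+1})}(\theta^{j+1}\rho,\,l^{(\lambda_{j+1})}_{4\theta^{j+1}\rho})$, whereas $(I_{j+1})_4$ calls for $\Phi^{(\lambda_{j+1})}(\theta^{j+1}\rho,\,l_{j+1})$ with $l_{j+1}=l^{(\lambda_{j+1})}_{\theta^{j+1}\rho}$; a \emph{second} affine-map comparison is therefore needed, and your proposal does not mention it. The paper avoids both complications by working with $\Phi^{(\lambda_{j+1})}(\theta^{j+1}\rho,\,l_j)$ directly: it bounds this quantity by $\Phi^{(\lambda_j)}(\tfrac14\theta^j\rho,\,l_j)$ via cylinder inclusion (where the inductive hypotheses make Caccioppoli applicable at scale $\theta^j\rho$), then performs a single comparison $l_j\mapsto l_{j+1}$ at the final step via \cite[Lemma 2.5]{M}. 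Your route is repairable with the missing comparison, but it is more cumbersome than necessary and, as written, leaves a gap in the last step.

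One further minor remark: your plan to ``fix $\rho$ at the outset so that $v(r)+\omega_p(r)\log(1/r)\leq\epsilon_2$ for all $r\leq\rho$'' strengthens the hypothesis~\eqref{smalless initial}$_4$, which the lemma only assumes at the single radius $\rho$. The paper instead absorbs this by writing $\Psi^*_{\lambda_j}(\theta^j\rho,l_j)\leq\Psi_{\lambda_j}(\theta^j\rho,l_j)+v(\rho)+\omega_p(\rho)\log(1/\rho)$, relying on monotonicity of $v$ and the behaviour of $\omega_p(\cdot)\log(1/\cdot)$ near zero; either way one needs a word on why the smallness propagates to the scales $\theta^j\rho$, so this is a minor formulation difference rather than a mathematical one.
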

\begin{proof} We omit for simplification the reference point $z_0$ in our notation and we will prove the lemma by induction.
We fix a constant $\hat\epsilon_1<\epsilon_1$ where $\epsilon_1$ is the constant from Proposition \ref{decay proposition}
and the constant $\hat\epsilon_1$ will be determined in
the course of the proof.
Now, we set $\epsilon_2:=\frac{1}{3}\hat\epsilon_1$.
Initially, the inequalities in
$(I_{0})$
hold true
from \eqref{smalless initial}.

It only suffices to prove that
the implication
$(I_{j})\Rightarrow (I_{j+1})$ holds true for any
$j\geq0$.
For simplicity, we may take
$l_j=l_{\theta^{j}\rho}^{(\lambda_{j})}$ for $j=0,1,\cdots$.
From \eqref{smalless initial}, \eqref{smalless jstep}$_3$ and \eqref{hybrid excess}, we conclude that for any $\rho<\rho_0$
there holds
\begin{equation}\label{psixing}
\Psi_{\lambda_j}^*(\theta^j\rho,l_j)\leq
\Psi_{\lambda_j}(\theta^j\rho,l_j)+v(\rho)+\omega_p(\rho)\log\Big(\frac{1}{\rho}\Big)
\leq 2\epsilon_2<\epsilon_1.
\end{equation}
Keeping in mind that $\rho<\rho_0$ we deduce from \eqref{psixing} and $(I_j)_2$-$(I_j)_4$ that the hypotheses of Proposition
\ref{decay proposition} with $(\rho,\lambda)$ replaced by $(\theta^j\rho,\lambda_j)$ are fulfilled.
Therefore, we conclude that
there exists a scaling factor $\lambda_{j+1}\in[\frac{1}{2}\lambda_j,2M\lambda_j]$ such that $\lambda_{j+1}=1+|Dl_{j+1}|$,
hence that $\lambda_{j+1}\geq1$, and finally that $\lambda_{j+1}\leq 2M\lambda_j\leq (2M)^{j+1}$.
This implies that $(I_{j+1})_1$ and $(I_{j+1})_2$ hold true.
Since $\theta<(3c_*)^{-\frac{1}{2}}$,
we infer from \eqref{decay estimate} and \eqref{psixing} that
\begin{equation*}\begin{split}
\Psi_{\lambda_{j+1}}(\theta^{j+1}\rho,l_{j+1})&\leq c_*\theta^2 \Psi_{\lambda_{j}}^*(\theta^{j}\rho,l_{j})
\leq 3c_*\theta^2\epsilon_2\leq \epsilon_2.
\end{split}
\end{equation*}
This proves that $(I_{j+1})_3$ holds true.
The next thing to do in the proof is to verify that $(I_{j+1})_4$ holds true.
Since $\theta<2^{-
\gamma_2-6}$ and $\lambda_{j+1}\in[\frac{1}{2}\lambda_j,2M\lambda_j]$, we have $\sqrt{\theta}<\frac{1}{4}$ and
$Q^{(\lambda_{j+1})}_{\theta^{j+1}\rho}\subset Q^{(\lambda_{j})}_{\theta^{j+\frac{1}{2}}\rho}
\subset Q^{(\lambda_{j})}_{\theta^{j}\rho}$.
Moreover, we apply
the Caccioppoli inequality \eqref{Cac} with $(\rho,\lambda,l)$ replaced by $(\theta^j\rho,\lambda_j,l_j)$ to deduce that
there exists a constant $\bar c_1$ depending only upon $n$, $N$, $\nu$, $L$, $\gamma_2$, $E$ and $M$ such that
the following inequality holds
\begin{equation}\begin{split}\label{c1bar}
\Phi^{(\lambda_{j+1})}(\theta^{j+1}\rho,l_{j})&=\ \dashint_{Q_{\theta^{j+1}\rho}^{(\lambda_{j+1})}}
 \left(\frac{|Du-Dl_j|}{1+|Dl_j|}\right)^2+\left(\frac{|Du-Dl_j|}{1+|Dl_j|}\right)^{p_0}\,\mathrm {d}z
\\&\leq\theta^{-\frac{1}{2}(n+2)}\Big(\frac{\lambda_j}{\lambda_{j+1}}
\Big)^{2-p_0}\Phi^{(\lambda_{j})}(\theta^{j+\frac{1}{2}}\rho,l_{j})
\\&\leq \theta^{-(n+2)}(2M)^{\gamma_2-2}\Phi^{(\lambda_{j})}(\frac{1}{4}\theta^{j}\rho,l_{j})
\\&\leq  c_{Cacc}\theta^{-(n+2)}(2M)^{\gamma_2-2}\Psi_{\lambda_{j}}^*(\theta^{j}\rho,l_{j})
\leq \bar c_1\theta^{-(n+2)}\hat \epsilon_1,
\end{split}
\end{equation}
where we used \eqref{psixing} in the last step.
On the other hand, we apply \cite[Lemma 2.5]{M} with $(\rho,\lambda,A_{z_0,\ \rho}^{(\lambda)},\xi,w)$
replaced by $(\theta^{j+1}\rho,\lambda_{j+1},Dl_{j+1},l_j(x_0),Dl_j)$
to infer that there exists a constant $\bar c_2\geq1$
depending only upon  $n$, $\gamma_2$ and $M$ such that
the following inequality holds
\begin{equation}\begin{split}\label{c2bar1}
|Dl_{j+1}-Dl_{j}|^2&\leq \frac{n(n+2)}{(\theta^{j+1}\rho)^2}\ \dashint_{Q_{\theta^{j+1}\rho}^{(\lambda_{j+1})}}
|u-l_j|^2\,\mathrm{d}z
\\&\leq \frac{n(n+2)\lambda_j^{2-p_0}}{\theta^{n+4}\lambda_{j+1}^{2-p_0}}
\ \dashint_{Q_{\theta^{j}\rho}^{(\lambda_{j})}}
\frac{|u-l_j|^2}{(\theta^j\rho)^2(1+|Dl_j|)^2}\,\mathrm{d}z\cdot (1+|Dl_j|)^2
\\&\leq n(n+2)(2M)^{\gamma_2-2}\theta^{-n-4}(1+|Dl_j|)^2\Psi_{\lambda_{j}}(\theta^{j}\rho,l_{j})
\\&\leq \bar c_2\theta^{-n-4}(1+|Dl_j|)^2\hat\epsilon_1.
\end{split}
\end{equation}
Moreover, the inequality \eqref{c2bar1} also implies that
\begin{equation}\begin{split}\label{c2bar2}
|Dl_{j+1}-Dl_{j}|^{p_0}\leq \bar c_2^{\frac{\gamma_2}{2}}\theta^{-(n+4)\frac{\gamma_2}{2}}(1+|Dl_j|)^{p_0}\hat\epsilon_1,
\end{split}
\end{equation}
since $p_0\geq 2$ and $\hat\epsilon_1<1$. Recalling that $\lambda_{j+1}\in[\frac{1}{2}\lambda_j,2M\lambda_j]$
and $\lambda_{j+1}=1+|Dl_{j+1}|$,
then we obtain $1+|Dl_j|\geq\lambda_j\geq\frac{1}{2M}\lambda_{j+1}=\frac{1}{2M}(1+|Dl_{j+1}|)$ and $1+|Dl_{j+1}|=\lambda_{j+1}
\geq\frac{1}{2}\lambda_j\geq \frac{1}{2M}(1+|Dl_{j}|)$. At this stage, we apply \eqref{c1bar}-\eqref{c2bar2} to conclude with
\begin{equation*}\begin{split}
\Phi^{(\lambda_{j+1})}&(z_0,\theta^{j+1}\rho,l_{j+1})\\&\leq 2^{\gamma_2-1}(2M)^{\gamma_2}
\Phi^{(\lambda_{j+1})}(z_0,\theta^{j+1}\rho,l_{j})
+2^{\gamma_2-1}\frac{|Dl_j-Dl_{j+1}|^{p_0}}{(1+|Dl_{j+1}|)^{p_0}}
+2\frac{|Dl_j-Dl_{j+1}|^2}{(1+|Dl_{j+1}|)^2}
\\&\leq  2^{\gamma_2-1}(2M)^{\gamma_2}\left(
\Phi^{(\lambda_{j+1})}(z_0,\theta^{j+1}\rho,l_{j})
+\frac{|Dl_j-Dl_{j+1}|^{p_0}}{(1+|Dl_{j}|)^{p_0}}
+\frac{|Dl_j-Dl_{j+1}|^2}{(1+|Dl_{j}|)^2}\right)
\\&\leq 2^{\gamma_2-1}(2M)^{\gamma_2}(2\bar c_2^{\frac{\gamma_2}{2}}+\bar c_1)\theta^{-(n+4)\frac{\gamma_2}{2}}
\hat\epsilon_1.
\end{split}
\end{equation*}
Finally, we choose
\begin{equation}\label{epsilon2}\hat\epsilon_1=\min\Big\{\frac{1}{2}\epsilon_1,
\frac{1}{16}[2^{\gamma_2-1}(2M)^{\gamma_2}(2\bar c_2^{\frac{\gamma_2}{2}}+\bar c_1)]^{-1}\theta^{(n+4)\frac{\gamma_2}{2}}\Big\}\end{equation}
and therefore the constant $\epsilon_2=\frac{1}{3}\hat\epsilon_1$ depends only upon
$n$, $N$, $\nu$, $L$, $\gamma_2$, $E$, $M$ and $\theta$.
Moreover, we have established the inequality $(I_{j+1})_4$ for this
choice of $\hat\epsilon_1$. The proof of the lemma is now complete.
\end{proof}
\begin{proof}[Proof of Theorem \ref{main}]
We fix a point $\mathfrak z_0\in \Omega_T\setminus(\Sigma_1\cup\Sigma_2)$ and
let $\alpha\in (0,1)$ be a fixed constant. From now on, we show that $u$ is H\"older continuous with H\"older exponent
$\alpha$ on a neighborhood of $\mathfrak z_0$.
From Proposition \ref{start}, we can find a constant $M>0$ and
a constant $\hat\epsilon=\hat\epsilon(n,N,\nu,L,E,\gamma_2,M)>0$
such that
the following is true: For any $\epsilon<\hat\epsilon$ there exists a radius $\rho=\rho(n,N,\nu,L,E,\gamma_2,M,\epsilon)<\rho_0$ such that
for any $z_0\in Q_{\rho/8}(\mathfrak z_0)$ there holds
 \begin{equation}\label{coincide}
\begin{cases}\,&1+|Dl_{z_0,\ \rho}|\leq M,\\
\,&\Psi(z_0,\rho,l_{z_0,\ \rho})\leq\epsilon,\\
\,&\Phi(z_0,\rho,l_{z_0,\ \rho})\leq\frac{1}{16},
\\\,&v(\rho)+\omega_p(\rho)\log\Big(\frac{1}{\rho}\Big)\leq \epsilon.
	\end{cases}
\end{equation}
At this point, we fix
 \begin{equation}\label{thetafinal}\theta=\frac{1}{2}\min\left\{2^{-
\gamma_2-6},(3c_*)^{-\frac{1}{2}},(2M)^{-\frac{4+(\gamma_2-2)(n+2+2\alpha)}{4(1-\alpha)}}\right\}\end{equation}
where $c_*$ is the constant in \eqref{decay estimate}. This also fixes $\epsilon_2$ via \eqref{epsilon2}.
Moreover, we
fix $\epsilon=\min\{\hat\epsilon,\epsilon_2\}$ where $\epsilon_2$ is the constant in Lemma \ref{iteration lemma}.
Note that this particular choice of $\epsilon$ determines
$\rho=\rho(\epsilon)$ from Proposition \ref{start} and therefore \eqref{coincide} holds for such $\epsilon$ and $\rho$.
Since $\epsilon\leq\epsilon_2$, we see that \eqref{smalless initial} holds true and the hypotheses of Lemma \ref{iteration lemma}
are fulfilled.
Our task now is to show that $u\in C_{\mathrm{loc}}^{0;\alpha,\alpha/2}(Q_{\rho/8}(\mathfrak z_0),\mathbb{R}^N)$ by using the iteration 
scheme from Lemma \ref{iteration lemma}.
To this end, we use the Campanato characterization for the parabolic H\"older space. More precisely, we shall show that
\begin{equation*}
\sup_{z_0\in Q_{\rho/8}(\mathfrak z_0)}
\sup_{r>0}\frac{1}{|Q_r(z_0)|^{1+\frac{2\alpha}{n+2}}}\int_{Q_r(z_0)}|u-(u)_{Q_r(z_0)}|^2\,\mathrm{d}z<+\infty.\end{equation*}
By a localization argument, our problem reduces to show that
there exists a constant $c=c(n,N,\nu,L,E,\gamma_2,\theta,\alpha,\rho,M)>0$ such that
for any $z_0\in Q_{\rho/8}(\mathfrak z_0)$ and $r\in(0,\rho]$ there holds
\begin{equation}\label{final estimate}\int_{Q_r(z_0)}|u-(u)_{Q_r(z_0)}|^2\,\mathrm{d}z\leq cr^{n+2+2\alpha}.\end{equation}
The proof of \eqref{final estimate} follows in a similar manner as the arguments in \cite[page 1817-1819]{M} and
we just sketch the proof. Once again, we set
$l_j=l_{z_0,\ \theta^{j}\rho}^{(\lambda_{j})}$ for $j=0,1,\cdots$.
We first show that
\begin{equation}\label{jestimate}\dashint_{Q_{\theta^j\rho}^{(\lambda_j)}(z_0)}|u-l_j(x_0)|^2\,\mathrm{d}z\leq
(2M)^{2(j+1)}(\theta^j\rho)^2\end{equation}
holds for any $j\in\mathbb{N}$.
According to \eqref{smalless jstep}$_1$-\eqref{smalless jstep}$_3$, it follows that
\begin{equation*}\begin{split}
\dashint_{Q_{\theta^j\rho}^{(\lambda_j)}(z_0)}|u-l_j(x_0)|^2\,\mathrm{d}z
&=\ \dashint_{Q_{\theta^j\rho}^{(\lambda_j)}(z_0)}|u-l_j+Dl_j\cdot(x-x_0)|^2\,\mathrm{d}z
\\&\leq 2\ \dashint_{Q_{\theta^j\rho}^{(\lambda_j)}(z_0)}|u-l_j|^2\,\mathrm{d}z+
2|Dl_j|^2\ \dashint_{Q_{\theta^j\rho}^{(\lambda_j)}(z_0)}|x-x_0|^2\,\mathrm{d}z
\\&\leq 2(\theta^j\rho)^2(1+|Dl_j|)^2\Psi_{\lambda_{j}}^*(z_0,\theta^{j}\rho,l_{j})+2(\theta^j\rho)^2|Dl_j|^2
\\&\leq 4(\theta^j\rho)^2(1+|Dl_j|)^2\leq
(2M)^{2(j+1)}(\theta^j\rho)^2,
\end{split}\end{equation*}
which proves \eqref{jestimate}.
Therefore, we conclude from \eqref{jestimate} and $l_j(x_0)=(u)_{Q_{\theta^j\rho}^{(\lambda_j)}(z_0)}$ that for any $j\geq0$
there holds
\begin{equation*}\begin{split}
\int_{Q_{\theta^j\rho}^{(\lambda_j)}(z_0)}|u-(u)_{Q_{\theta^j\rho}^{(\lambda_j)}(z_0)}|^2\,\mathrm{d}z
\leq (2M)^{2(j+1)}(\theta^j\rho)^2|Q_{\theta^j\rho}^{(\lambda_j)}(z_0)|=\alpha_n(2M)^{2(j+1)}(\theta^j\rho)^{n+4}\lambda_j^{2-p_0},
\end{split}\end{equation*}
where $\alpha_n$ is the volume of the unit ball in $\mathbb{R}^n$.
Next, we set $\hat\theta=(2M)^{\frac{2-\gamma_2}{2}}\theta$ and this implies
$Q_{\hat\theta^j\rho}(z_0)\subset Q_{\theta^j\rho}^{(\lambda_j)}(z_0)$.
Consequently, we infer from \eqref{thetafinal} that
\begin{equation*}\begin{split}
\int_{Q_{\hat\theta^j\rho}(z_0)}|u-(u)_{Q_{\hat\theta^j\rho}(z_0)}|^2\,\mathrm{d}z&\leq
2\int_{Q_{\hat\theta^j\rho}(z_0)}|u-(u)_{Q_{\theta^j\rho}^{(\lambda_j)}(z_0)}|^2\,\mathrm{d}z\leq
2\int_{Q_{\theta^j\rho}^{(\lambda_j)}(z_0)}|u-(u)_{Q_{\theta^j\rho}^{(\lambda_j)}(z_0)}|^2\,\mathrm{d}z
\\&\leq 2\alpha_n(2M)^2\rho^{n+4}\hat\theta^{j(n+2+2\alpha)}\Big[(2M)^{\frac{4+(\gamma_2-2)(n+2+2\alpha)}{2}}\theta^{2-2\alpha}\Big]^j
\\&\leq 2\alpha_n(2M)^2\rho^{n+4}\hat\theta^{j(n+2+2\alpha)}.
\end{split}\end{equation*}
Finally, for any fixed $r\in(0,\rho]$, there exists an integer $j\in\mathbb{N}$ such that $\hat\theta^{j+1}\rho<r\leq \hat\theta^j\rho$.
Therefore, we conclude
from the above estimate that
\begin{equation*}\begin{split}
\int_{Q_r(z_0)}|u-(u)_{Q_r(z_0)}|^2\,\mathrm{d}z&\leq2
\int_{Q_r(z_0)}|u-(u)_{Q_{\hat\theta^j\rho}(z_0)}|^2\,\mathrm{d}z
\leq2
\int_{Q_{\hat\theta^j\rho}(z_0)}|u-(u)_{Q_{\hat\theta^j\rho}(z_0)}|^2\,\mathrm{d}z
\\&\leq 4\alpha_n(2M)^2\rho^{n+4}\hat\theta^{j(n+2+2\alpha)}
\\&\leq 4\alpha_n(2M)^2\hat\theta^{-(n+2+2\alpha)}\rho^{n+4}\left(\frac{r}{\rho}\right)^{n+2+2\alpha}\leq c(n,\gamma_2,M,\alpha,\theta,\rho)
r^{n+2+2\alpha},
\end{split}\end{equation*}
which proves the desired estimate \eqref{final estimate} for
any fixed $\alpha\in(0,1)$. This finishes the proof of Theorem \ref{main}.
\end{proof}
\begin{remark}We finally remark that the method in this paper could be improved to address the problem for the full range $p(z)>\frac{2n}{n+2}$.
In the case that $\frac{2n}{n+2}<p(z_0)<2$ for a fixed point $z_0\in\Omega_T$, we could consider the scaled parabolic cylinder in the following form:
$$Q^{(\lambda)}_r(z_0):=B_{\lambda^{\frac{2-p_0}{2}}r}(x_0)\times\Lambda_r(t_0)$$
where
$p_0=p(z_0)$,
$B_{\lambda^{\frac{2-p_0}{2}}r}(x_0)=\{x\in\mathbb{R}^n:|x-x_0|\leq \lambda^{\frac{2-p_0}{2}}r\}$ and
$\Lambda_r(z_0)=(t_0-r^2,t_0)$. We leave the problem of
the singular range $\frac{2n}{n+2}<p(z)<2$ for future study.
\end{remark}
\appendix
\section{Estimate in $L^p\log^\gamma L$ space}
For the sake of completeness we state in Proposition \ref{LlogL} an embedding result $L^{p+\sigma}\hookrightarrow L^p\log^\gamma L$
from \cite[Lemma 8.6]{IV} which we used in the proof of Lemma \ref{importanthigher}. For completeness sake we also include the proof
of this result here.
\begin{proposition}\label{LlogL}
Let $m\geq2$, $\bar\gamma>1$ and $\Omega\subset \mathbb{R}^m$ be a measurable set.
Let $\gamma_2\geq1$ and $1\leq\gamma\leq\bar\gamma$. Assume that $p\geq1$ and $f\in L^p\log L^\gamma(\Omega)$.
For any $\sigma\in(0,\gamma)$, there exists a constant $c$ depending only upon $\bar\gamma$ and $\sigma$ such that
\begin{equation}\begin{split}\label{LlogLestimate}\Big(\ \ \dashint_{\Omega}|f|^p\log^\gamma \Big(e+\frac{|f|}{\Big(\ \dashint_{\Omega}|f|^p\,
\mathrm {d}x\Big)^{\frac{1}{p}}}\Big)\,\mathrm {d}x\Big)^{\frac{1}{p}}\leq c\Big(\ \ \dashint_{\Omega}|f|^{p+\sigma}\,
\mathrm {d}x\Big)^{\frac{1}{p+\sigma}}.\end{split}\end{equation}
\end{proposition}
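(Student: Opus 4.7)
The plan is to reduce the claim to a normalized form by homogeneity, dominate the logarithm pointwise by a small fractional power of $|f|$, and close the estimate via log-convexity of $L^q$-norms. Since both sides of \eqref{LlogLestimate} are $1$-homogeneous in $f$, after rescaling I may assume $\dashint_\Omega |f|^p\,\mathrm{d}x = 1$. Writing $I := \dashint_\Omega |f|^{p+\sigma}\,\mathrm{d}x$, Jensen's inequality forces $I \geq 1$, and it suffices to prove
\[
\dashint_\Omega |f|^p \log^\gamma(e+|f|)\,\mathrm{d}x \;\leq\; c\, I^{p/(p+\sigma)},
\]
from which \eqref{LlogLestimate} follows by extracting the $1/p$-power (using $(2c)^{1/p} \leq 1+2c$ for $p \geq 1$) and undoing the rescaling.

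The key pointwise ingredient is the elementary bound $\log s \leq \eta^{-1} s^\eta$, valid for $s \geq 1$ and every $\eta > 0$; applied to $s = e+t$ and raised to the $\gamma$-th power, it yields
\[
\log^\gamma(e+t) \;\leq\; c(\bar\gamma,\eta)\,(1+t^\eta), \qquad t \geq 0,
\]
with a constant depending only on $\bar\gamma$ (absorbing $\gamma \leq \bar\gamma$) and on $\eta$. I will fix $\eta := \sigma/(1+\sigma)$, so the constant depends only on $\bar\gamma$ and $\sigma$. Integrating this pointwise estimate and using $\dashint |f|^p = 1$ reduces the problem to bounding $\dashint_\Omega |f|^{p+\eta}\,\mathrm{d}x$.

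Finally, since $0 < \eta < \sigma$, the log-convexity of $L^q$-norms (H\"older's inequality) gives
\[
\dashint_\Omega |f|^{p+\eta}\,\mathrm{d}x \;\leq\; \Bigl(\dashint_\Omega |f|^p\,\mathrm{d}x\Bigr)^{1-\eta/\sigma}\Bigl(\dashint_\Omega |f|^{p+\sigma}\,\mathrm{d}x\Bigr)^{\eta/\sigma} \;=\; I^{\eta/\sigma}.
\]
The specific choice $\eta = \sigma/(1+\sigma)$ was engineered so that $\eta/\sigma = 1/(1+\sigma) \leq p/(p+\sigma)$ uniformly for every $p \geq 1$; combined with $I \geq 1$ this delivers $I^{\eta/\sigma} \leq I^{p/(p+\sigma)}$, closing the estimate. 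The principal delicate point is the uniform $p$-independence of the constant, which must depend only on $\bar\gamma$ and $\sigma$ despite $p$ ranging over $[1,\infty)$: any smaller $\eta$ would blow up the factor $c(\bar\gamma,\eta)$, while a larger $\eta$ would violate the interpolation bound $\eta/\sigma \leq p/(p+\sigma)$ in the limit $p \to \infty$, so the threshold value $\sigma/(1+\sigma)$ is essentially dictated by balancing these two constraints.
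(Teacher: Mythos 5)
Your proof is correct, and it takes a genuinely different route from the paper's. The paper (following Iwaniec--Verde) goes through the Luxemburg norm $\|f\|_{L^p\log^\gamma L}$ as an intermediary: it first bounds $\|f\|_{L^p\log^\gamma L}$ by the $L^{p+\sigma}$ average via the pointwise inequality $\log^\gamma(1+x)\le(\gamma/\sigma)^\gamma x^\sigma$, and then proves $[f]_{L^p\log^\gamma L}\le c\,\|f\|_{L^p\log^\gamma L}$ using the submultiplicativity of $\log(e+\cdot\,)$ together with a case distinction on whether $\dashint|f|^p$ exceeds $\|f\|_{L^p\log^\gamma L}^p$. You bypass the Luxemburg norm entirely: after normalizing $\dashint|f|^p=1$ by $1$-homogeneity, you use the same kind of pointwise log-to-power bound, but at the sub-threshold exponent $\eta=\sigma/(1+\sigma)<\sigma$, and then interpolate $\dashint|f|^{p+\eta}\le I^{\eta/\sigma}$ between $L^p$ and $L^{p+\sigma}$. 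The choice of $\eta$ is the crux: it makes $\eta/\sigma=1/(1+\sigma)\le p/(p+\sigma)$ uniformly in $p\ge1$, so $I^{\eta/\sigma}\le I^{p/(p+\sigma)}$; had you used $\eta=\sigma$ (as the paper's pointwise bound does), you would only obtain $I$, which after the $1/p$-th power gives the useless $I^{1/p}\ge I^{1/(p+\sigma)}$. In effect, the work done by the Luxemburg normalization and the two-case analysis in the paper is absorbed in your proof by the single interpolation step and the threshold choice of $\eta$; your version is more elementary and self-contained, while the paper's packages the intermediate statement $[f]\lesssim\|f\|$ in a reusable form and handles the $p$-uniformity automatically through the $k^p$ in the Luxemburg definition. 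One small point worth making explicit in a write-up: the pointwise constant $c(\bar\gamma,\eta)$ should be obtained by first majorizing $\log^\gamma(e+t)\le\log^{\bar\gamma}(e+t)$ (valid since $\log(e+t)\ge1$) and then applying $\log(1+s)\le\alpha^{-1}s^\alpha$ with $\alpha=\eta/\bar\gamma\in(0,1)$, so that the exponent and the constant are both controlled by $\bar\gamma$ and $\sigma$ alone, independently of $\gamma\in[1,\bar\gamma]$ and $p\ge1$.
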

\begin{proof} Our proof is due to Iwaniec and Verde \cite[Lemma 8.6]{IV}.
We define the left-hand side of \eqref{LlogLestimate} by
\begin{equation*}\begin{split}
[f]_{L^p\log^\gamma L}:=\Big(\ \ \dashint_{\Omega}|f|^p\log^\gamma \Big(e+\frac{|f|}{\Big(\ \dashint_{\Omega}|f|^p\,\mathrm
{d}x\Big)^{\frac{1}{p}}}\Big)\,\mathrm {d}x\Big)^{\frac{1}{p}}
\end{split}\end{equation*}
and introduce the Luxemburg norm
\begin{equation}\begin{split}\label{Luxemburg}\|f\|_{L^p\log^\gamma L}=\inf\left\{k>0:\ \ \dashint_\Omega|f|^p\log^\gamma\Big(e+\frac{|f|}
{k}\Big)\,\mathrm {d}x\leq k^p\right\}.
\end{split}\end{equation}
For a fixed $\alpha\in(0,1)$, it is easy to verify that the inequality $\log(1+x)\leq \frac{1}{\alpha} x^\alpha$ holds for every $x\geq0$.
This gives
$\log^\gamma(1+x)\leq \Big(\frac{\gamma}{\sigma}\Big)^\gamma x^\sigma$. It follows from \eqref{Luxemburg} that
\begin{equation*}\begin{split}\|f\|_{L^p\log^\gamma L}\leq c
\Big(\ \ \dashint_{\Omega}|f|^{p+\sigma}\,\mathrm {d}x\Big)^{\frac{1}{p+\sigma}},\end{split}\end{equation*}
where the constant $c$ depends only upon $\sigma$ and $\bar\gamma$.
It remains to prove that $[f]_{L^p\log^\gamma L}\leq c\|f\|_{L^p\log^\gamma L}$.
To this end, we set $K=\|f\|_{L^p\log^\gamma L}$
and deduce
\begin{equation}\begin{split}\label{lplogl}[f]_{L^p\log^\gamma L}^p&
\leq  c_\gamma\ \ \dashint_{\Omega}|f|^p\log^\gamma \Big(e+\frac{|f|}{K}\Big)\,\mathrm {d}x
+c_\gamma
\ \ \dashint_{\Omega}|f|^p\log^\gamma \Big(e+\frac{K}{\Big(\ \dashint_{\Omega}|f|^p\,\mathrm {d}x\Big)^{\frac{1}{p}}}\Big)\,\mathrm {d}x
\\&=cK^p+c\log^\gamma\ \Big(e+\frac{K}{\ \Big(\ \dashint_{\Omega}|f|^p\,\mathrm {d}x\Big)^{\frac{1}{p}}}\Big)
\ \ \dashint_{\Omega}|f|^p\,\mathrm {d}x.
\end{split}\end{equation}
In the case $\ \dashint_{\Omega}|f|^p\,\mathrm {d}x\geq K^p$, we infer from \eqref{lplogl} that
\begin{equation*}\begin{split}[f]_{L^p\log^\gamma L}\leq c(\bar\gamma,\sigma)
\Big(\ \dashint_{\Omega}|f|^p\,\mathrm {d}x\Big)^{\frac{1}{p}}
\end{split}\end{equation*}
and \eqref{LlogLestimate} follows by the H\"older's inequality. In the case $\ \dashint_{\Omega}|f|^p\,\mathrm {d}x< K^p$, we observe that
 the estimate $\log^\gamma(e+x)\leq c(\gamma)x$ holds for any $x\geq1$. It follows from \eqref{lplogl} that
 $[f]_{L^p\log^\gamma L}^p\leq c(\bar\gamma)K^p$.
We have thus proved the proposition.
\end{proof}
\section{$A$-caloric approximation}
In this section, we present a version of the $A$-caloric approximation that is compatible with the intrinsic geometry of non-standard growth.
Our proof follows in a similar manner as the proofs of \cite[Lemma 3.2]{DMS} and \cite[Lemma 2.8]{DH}, and
we just sketch the proof.
We follow the terminology used in \cite[Chapter 3]{DMS} and introduce the definition of $A$-caloric map.
\begin{definition}Let $\lambda,\Lambda>0$ be fixed constants
and $A:\mathbb{R}^{n\times N}\to \mathbb{R}^{n\times N}$ be a bilinear form which satisfies
\begin{equation}\begin{split}\label{linearA}\lambda|\tilde w|^2\leq \big\langle A\tilde w,\tilde w \big \rangle,\qquad
|\big\langle Aw,\tilde w \big \rangle|\leq\Lambda|w||\tilde w|,\qquad w,\tilde w\in\mathbb{R}^{N\times n}.
\end{split}\end{equation}
A map $f\in L^2(t_0-\rho^2,t_0;W^{1,2}(B_\rho(x_0)),\mathbb{R}^N) $ is called $A$-caloric in the parabolic cylinder
$Q_\rho(z_0)$ if it satisfies
\begin{equation*}\int_{Q_\rho(z_0)}\left[f\cdot\partial_t \varphi-\big\langle ADf, D\varphi\big\rangle\right]
\mathrm {d}z=0,\qquad\text{whenever}\quad \varphi\in C_0^\infty(Q_\rho(z_0);\mathbb{R}^N).\end{equation*}
\end{definition}
We
are now in a position to state our main result of this section.
\begin{lemma}\label{caloric}
Let $\epsilon>0$ and $0<\lambda\leq\Lambda$ be fixed.
Then, for any $p\in[2,\gamma_2]$ there exists $\delta_0=\delta_0(n,N,\gamma_2,\lambda,L,\epsilon)\in(0,1]$
with the following property: Whenever $\gamma\in(0,1]$ and $A$ is a bilinear form on $\mathbb{R}^{N\times n}$ satisfying
\eqref{linearA}
and whenever
$w\in L^p(t_0-\rho^2,t_0;W^{1,p}(B_\rho(x_0)),\mathbb{R}^N)$
is a map satisfying
\begin{equation*}\sup_{t_0-\rho^2<t<t_0}\ \dashint_{B_\rho(x_0)}\left|\frac{w}{\rho}\right|^2\,\mathrm {d}x
+\ \dashint_{Q_\rho(z_0)}|Dw|^2\,\mathrm {d}z+\ \dashint_{Q_\rho(z_0)}
\gamma^{p-2}\left(\ \left|\frac{w}{\rho}\right|^p+|Dw|^p\right)\,\mathrm {d}z
\leq1
\end{equation*}
which is approximatively $A$-caloric in the sense that
\begin{equation}\label{A caloric}\Big|\ \dashint_{Q_\rho(z_0)}w\cdot\partial_t\varphi
-\big\langle ADw,D\varphi \big \rangle\,\mathrm {d}z\Big|\leq\delta\sup_{Q_\rho(z_0)}|D\varphi|\end{equation}
for every $\varphi\in C_0^\infty(Q_\rho(z_0),\mathbb{R}^N)$, where $\delta\leq\delta_0(n,N,\gamma_2,\lambda,\Lambda,\epsilon)$,
then there exists an $A$-caloric map $f\in C^\infty(Q_{\rho/2}(z_0),\mathbb{R}^N)$ satisfies
\begin{equation*}\dashint_{Q_{\rho/2}(z_0)}\Big|\frac{f}{\rho/2}\Big|^2+|Df|^2\,\mathrm {d}z
+\ \ \dashint_{Q_{\rho/2}(z_0)}\gamma^{p-2}\left(\ \Big|\frac{f}{\rho/2}\Big|^p+|Df|^p\right)\,\mathrm {d}z\leq c(n,\gamma_2,\lambda,
\Lambda)
\end{equation*}
and
\begin{equation}\label{ap}\dashint_{Q_{\rho/2}(z_0)}\Big|\frac{w-f}{\rho/2}\Big|^2\,\mathrm {d}z
+\gamma^{p-2}\ \Big|\frac{w-f}{\rho/2}\Big|^p\,\mathrm {d}z\leq \epsilon.
\end{equation}
\end{lemma}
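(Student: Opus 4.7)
The proof proceeds by a contradiction/compactness argument, following the Duzaar--Mingione template in \cite{DMS} and its adaptation to non-standard growth in \cite{DH}, with careful bookkeeping of the weighted $L^p$ term carrying the factor $\gamma^{p-2}$. Suppose the conclusion fails for some $\epsilon>0$. Then for every $k\in\NN$ one finds $p_k\in[2,\gamma_2]$, $\gamma_k\in(0,1]$, bilinear forms $A_k$ obeying \eqref{linearA} with the fixed constants $\lambda,\Lambda$, and maps $w_k$ on $Q_\rho(z_0)$ (taken to be $Q_1(0)$ after translation and rescaling) satisfying the prescribed energy bound and the approximation identity \eqref{A caloric} with $\delta_k\downarrow 0$, but for which no $A_k$-caloric $f_k$ on $Q_{\rho/2}$ satisfies \eqref{ap}.

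By compactness we pass to subsequences to extract $p_k\to p_\infty\in[2,\gamma_2]$, $\gamma_k\to\gamma_\infty\in[0,1]$, and $A_k\to A_\infty$ in the space of bilinear forms, so that $A_\infty$ still verifies \eqref{linearA}. The uniform $L^\infty_t L^2_x\cap L^2_tW^{1,2}_x$ bound produces a weak limit $w\in L^2(-1,0;W^{1,2}(B_1))$; lower semicontinuity transfers the energy bound to $w$, including the weighted $L^{p_\infty}$ piece when $\gamma_\infty>0$. Passing to the limit in \eqref{A caloric}, the right-hand side $\delta_k\sup|D\varphi|\to 0$, and weak convergence of $w_k, Dw_k$ together with $A_k\to A_\infty$ shows that $w$ is $A_\infty$-caloric on $Q_1$. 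Standard linear parabolic theory then gives $w\in C^\infty_{\mathrm{loc}}$ with the interior bounds needed for the first estimate in the statement to hold for $w$ itself on $Q_{1/2}$.

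The candidate approximation is built as follows: on a slightly larger cylinder $Q_{3/4}$ let $f_k$ solve the linear $A_k$-caloric Cauchy--Dirichlet problem with lateral/initial datum equal to $w_k$. Standard energy estimates for linear parabolic systems with constant coefficients give a uniform bound on $f_k$ in $L^\infty_tL^2_x\cap L^2_tW^{1,2}_x$ (and, by real-variable higher integrability for $A_k$-caloric functions on interior cylinders, in $L^{q}_tW^{1,q}_x$ for some $q>p_\infty$ on $Q_{1/2}$), supplying the first displayed bound in the conclusion. To compare $w_k$ with $f_k$ one cannot test \eqref{A caloric} directly with $w_k-f_k$ because the right-hand side only controls $\sup|D\varphi|$; instead one invokes a Lipschitz (or mollification) truncation $\varphi=T_\mu(w_k-f_k)$, bounds $\sup|D\varphi|\le\mu$, and exploits the fact that the bad set $\{|D(w_k-f_k)|>\mu\}$ has vanishing measure as $\mu$ is suitably chosen (a Calder\'on--Zygmund/weak-type argument driven by the uniform energy bound). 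Combining the ellipticity of $A_k$ on the good set with the controlled measure of the bad set yields
\[
\dashint_{Q_{1/2}}|w_k-f_k|^2\,\dd z\longrightarrow 0.
\]
For the weighted $L^{p_k}$ piece one writes $\gamma_k^{p_k-2}|w_k-f_k|^{p_k}=\gamma_k^{p_k-2}|w_k-f_k|^{p_k-2}\cdot|w_k-f_k|^2$ and uses H\"older together with the uniform weighted energy bound $\gamma_k^{p_k-2}\int(|w_k|^{p_k}+|f_k|^{p_k})\le C$ to upgrade the $L^2$ smallness into smallness in $L^{p_k}_\gamma$; when $\gamma_\infty=0$ the weighted norm is trivially small, and when $\gamma_\infty>0$ the $w_k$ converge strongly in $L^{p_k}$ by Aubin--Lions applied to the appropriate interpolation. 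This produces $f_k$ satisfying \eqref{ap} with $\epsilon$ for large $k$, contradicting the assumption.

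The main obstacle is the mismatch between the $C^1$-type control offered by \eqref{A caloric} and the $L^2$/$L^p$ comparison one needs: a naive test function $\varphi=w_k-f_k$ is not admissible because $\sup|D\varphi|$ is not controlled by the energy. Overcoming this requires the Lipschitz truncation argument of \cite{DMS}, and its adaptation here must produce error terms that are simultaneously small in the $L^2$ and in the weighted $L^{p_k}$ senses, uniformly in $p_k\in[2,\gamma_2]$ and $\gamma_k\in(0,1]$. Once this is done, the upgrade from $L^2$ to the weighted $L^{p_k}$ norm is purely interpolatory and uses only the standing energy bound.
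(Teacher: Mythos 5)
You take a genuinely different route than the paper, and although your contradiction/compactness setup matches, the two crucial steps the paper uses are replaced by alternatives that do not close as written.

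\textbf{What you do differently.} The paper (following \cite{DMS,DH}) lets the comparison map $v_k$ be the $A_k$-caloric map on $Q_{3/4}$ with \emph{the limit} $w$ as lateral/initial datum; then $v_k\to w$ strongly (because $A_k\to A$ and $w$ is $A$-caloric), and the strong $L^2$ convergence $w_k\to w$ is obtained from an Aubin--Lions argument driven by the $W^{-l,2}$-time-continuity estimate, so $w_k-v_k\to0$ without ever testing \eqref{A caloric} with $w_k-f_k$. You instead prescribe the datum $w_k$ for your $f_k$ and compare $w_k$ to $f_k$ directly via a parabolic Lipschitz truncation $T_\mu(w_k-f_k)$. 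That is a legitimate philosophy in other contexts but is the much harder route here.

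\textbf{Gap 1 (Lipschitz truncation).} When you test the difference of the two identities with $\varphi=T_\mu(w_k-f_k)$, the term $\int (w_k-f_k)\cdot\partial_t T_\mu(w_k-f_k)\,\mathrm dz$ has no manifest sign and is not handled by your sketch. The whole difficulty of the parabolic Lipschitz truncation (Diening--Schwarzacher style) is precisely this time-derivative contribution on the bad set: one needs a quantitative bound on $\partial_t(w_k-f_k)$ as a distribution, a parabolic Whitney decomposition of the bad set, and a proof that the errors vanish as $\mu$ is let grow slowly relative to $1/\delta_k$. None of this appears, and with only the weak information $\|\partial_t w_k-\operatorname{div}(A_kDw_k)\|_{(C^1_0)^*}\le\delta_k$ these estimates are delicate. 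By contrast, the Aubin--Lions route in the paper needs only the time-continuity estimate, which is much easier to get from \eqref{A caloric}.

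\textbf{Gap 2 (weighted $L^{p_k}$-term).} Your H\"older argument is circular. Splitting $\gamma_k^{p_k-2}|w_k-f_k|^{p_k}=\gamma_k^{p_k-2}|w_k-f_k|^{p_k-2}|w_k-f_k|^2$ and applying H\"older with exponents $\tfrac{p_k}{p_k-2}$ and $\tfrac{p_k}{2}$ returns the same weighted $L^{p_k}$-quantity raised to a power; it does not upgrade $L^2$-smallness. The case split is also faulty: if $\gamma_\infty=0$ the weighted norm is \emph{bounded} by a constant (since $\gamma_k^{p_k-2}\dashint|w_k|^{p_k}\le1$) but not trivially small, and when $p_\infty=2$ the factor $\gamma_k^{p_k-2}$ can converge to any value in $(0,1]$. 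The paper handles this by introducing the auxiliary sequence $\tilde w_k=\gamma_k^{(p_k-2)/p_k}w_k$, which carries \emph{unweighted} uniform $W^{1,p_k}$ bounds, passing to $\gamma_k^{(p_k-2)/p_k}\to\mu\in[0,1]$ and $\tilde w_k\to\mu w$, and then using a Gagliardo--Nirenberg interpolation on $Q_{1/2}$ to obtain strong $L^\sigma$ convergence for some $\sigma>p$. This normalization is absent from your argument and is what makes the weighted piece close uniformly in $p_k$ and $\gamma_k$.
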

\begin{proof}
Our approach is along the same lines as the one used in \cite[Lemma 3.2]{DMS} and \cite[Lemma 2.8]{DH}.
The lemma will be proved by contradiction and the proof
is divided into several steps. To start with, we first observe that
it suffices to prove the lemma in the case $\rho=1$ and $z_0=0$ by a scaling argument.

Step 1: If the statement was false then
we could find an $\epsilon>0$, $\gamma_k\in(0,1]$, $p_k\in[2,\gamma_2]$, bilinear forms
$A_k$ satisfying \eqref{linearA}, $w_k\in L^{p_k}(-1,0;W^{1,p_k}(B_1),\mathbb{R}^N)$ such that for every $k\in\mathbb{N}$ the map $w_k$ is
approximately $A_k$-caloric in the sense that
\begin{equation*}\Big|\ \dashint_{Q_1}w_k\cdot\partial_t\varphi
-\big\langle A_kDw_k,D\varphi \big \rangle\,\mathrm {d}z\Big|\leq\frac{1}{k}\sup_{Q_1}|D\varphi|\end{equation*}
for all $\varphi\in C_0^\infty(Q_1,\mathbb{R}^N)$ and satisfies
\begin{equation}\label{contradiction wk}\sup_{-1<t<0}\ \dashint_{B_1}\left|w_k(\cdot,t)\right|^2\,\mathrm {d}x
+\ \dashint_{Q_1}|Dw_k|^2\,\mathrm {d}z+\ \dashint_{Q_1}
\gamma_k^{p_k-2}\left(|w_k|^{p_k}+|Dw_k|^{p_k}\right)\,\mathrm {d}z
\leq1,
\end{equation}
while for all $A_k$-caloric maps $f\in C^\infty(Q_{1/2},\mathbb{R}^N)$
satisfying
\begin{equation}\label{B1assumption}\dashint_{Q_{1/2}}4|f|^2+|Df|^2\,\mathrm {d}z
+\ \ \dashint_{Q_{1/2}}\gamma_k^{p_k-2}\left(2^{p_k}|f|^{p_k}+|Df|^{p_k}\right)\,\mathrm {d}z\leq c_*
\end{equation}
with a constant $c_*=c_*(n,N,\gamma_2,\lambda,\Lambda)$,
there holds
\begin{equation}\label{B1result}\dashint_{Q_{1/2}}4|w_k-f|^2\,\mathrm {d}z
+2^{p_k}\gamma_k^{p_k-2} |w_k-f|^{p_k}\,\mathrm {d}z> \epsilon.
\end{equation}
In the remainder of the proof, we will construct a sequence of $A_k$-caloric maps $v_k$ such that \eqref{B1assumption} holds for a constant
$c_*>0$, but \eqref{B1result} fails to hold, which leads to a contradiction.

Step 2: We discuss uniform bounds and weak convergence for $w_k$. To this end,
we set $\tilde w_k=\gamma_k^{(p_k-2)/p_k}w_k$ and it follows from \eqref{contradiction wk} that
\begin{equation*}\dashint_{Q_1}|\tilde w_k|^{p_k}+|D\tilde w_k|^{p_k}\,\mathrm {d}z\leq 1.\end{equation*}
At this point, we choose $q\geq2$ with $q\leq p_k\leq q(1+\frac{2}{n})$ holds for all $k\geq 1$. By the H\"older's inequality, we have
\begin{equation}\label{wk}\dashint_{Q_1}|\tilde w_k|^q+|D\tilde w_k|^q\,\mathrm {d}z\leq 1.\end{equation}
In view of $p_k\in[2,\gamma_2]$ and $\gamma_k\in (0,1]$,
we see that $\gamma_k^{\frac{p_k-2}{p_k}}
\leq 1$ for all $k\in\mathbb{N}$. In the case $p_k\to 2$ and $\gamma_k\to0$, the limitation of $\gamma_k^{\frac{p_k-2}{p_k}}$
may not exists, but
the boundedness of $\gamma_k^{\frac{p_k-2}{p_k}}
\leq 1$ ensures the existence of a not relabeled-subsequence $\{\gamma_k^{\frac{p_k-2}{p_k}}\}_{k=1}^{+\infty}$
such
that $\gamma_k^{\frac{p_k-2}{p_k}}\to \mu$ for a constant $\mu\in[0,1]$.
Moreover, from \eqref{contradiction wk} and \eqref{wk}, we infer the existence of maps
$w\in L^2(-1,0;W^{1,2}(B_1,\mathbb{R}^N))$, $\tilde w\in L^q(-1,0;W^{1,q}(B_1,\mathbb{R}^N))$, a bilinear form $A$
and constants $(\gamma,p,\mu)$ such that
 \begin{equation}\label{convergence}
	\begin{cases}
	\ w_k\rightharpoonup w\qquad &\text{weakly\ in}\ L^2(Q_1,\mathbb{R}^N),   \\
	\ Dw_k\rightharpoonup Dw\qquad &\text{weakly\ in}\ L^2(Q_1,\mathbb{R}^{N\times n}),\\
\ \tilde w_k\rightharpoonup \tilde w\qquad &\text{weakly\ in}\ L^q(Q_1,\mathbb{R}^N),   \\
	\ D\tilde w_k\rightharpoonup D\tilde w\quad &\text{weakly\ in}\ L^q(Q_1,\mathbb{R}^{N\times n}),\\
\ A_k\to A\qquad&\text{as\ bilinear\ forms\ in}\ \mathbb{R}^{N\times n},
\\ \ \gamma_k\to \gamma \qquad&\text{in}\ [0,1],
\\ \ p_k\to p \qquad&\text{in}\ [2,\gamma_2],
\\ \ \gamma_k^{\frac{p_k-2}{p_k}}\to \mu \qquad&\text{in}\ [0,1].
	\end{cases}
\end{equation}
We observe from \eqref{convergence}$_1$ and \eqref{convergence}$_8$ that
$\tilde w_k\rightharpoonup \mu w$ weakly in $L^2(-1,0;W^{1,2}(B_1,\mathbb{R}^N)).$
 On the other hand, \eqref{convergence}$_3$ and \eqref{convergence}$_4$ imply
 $\tilde w_k\rightharpoonup \tilde w$ weakly in $L^q(-1,0;W^{1,q}(B_1,\mathbb{R}^N))$. It is clear that $\tilde w=\mu w$.
 By lower semicontinuity, we infer from \eqref{contradiction wk} and \eqref{wk} that
\begin{equation}\label{w}\dashint_{Q_1}|w|^2+|Dw|^2+|\tilde w|^q+|D\tilde w|^q\,\mathrm {d}z\leq 1.\end{equation}
Furthermore, we follow the arguments in \cite[page 28]{DMS} to infer that the limit map $w$ is an $A$-caloric map. More precisely, we have
\begin{equation*}\int_{Q_1}\left[w\cdot\partial_t \varphi-\big\langle ADw, D\varphi\big\rangle\right]
\mathrm {d}z=0,\end{equation*}
for any $\varphi\in C_0^\infty(Q_1,\mathbb{R}^N)$. This also implies that $w\in C^\infty(Q_1,\mathbb{R}^N)$.

Step 3: We aim to establish the strong convergence of $w_k$ by using compactness method. To start with,
we follow the arguments in \cite[page 28-29]{DMS} to deduce that for any $h\in(0,1)$ there holds
\begin{equation*}\int_{-1}^{-h}\|w_k(\cdot,t)-w_k(\cdot,t+h)\|_{W^{-l,2}(B_1,\mathbb{R}^N)}^2
\mathrm {d}t\leq c(n,L,l)\Big(h+\frac{1}{k^2}\Big),\end{equation*}
where $l>\frac{n+2}{2}$ is fixed. This inequality together with a compactness result (cf. \cite[Theorem 2.5]{DMS}) implies that
$w_k\to w$ strongly in $L^2(Q_1,\mathbb{R}^N)$. Moreover, since
\begin{equation*}\begin{split}\|\tilde w_k-\tilde w\|_{L^2(Q_1,\mathbb{R}^N)}
\leq \|w_k-w\|_{L^2(Q_1,\mathbb{R}^N)}+\|w\|_{L^2(Q_1,\mathbb{R}^N)}(\gamma_k^{\frac{p_k-2}{p_k}}-\mu),\end{split}\end{equation*}
we have $\tilde w_k\to \tilde w$ strongly in $L^2(Q_1,\mathbb{R}^N)$. Next, we claim that
\begin{equation}\label{claim}\tilde w_k\to\tilde w\qquad \text{strongly\ in}\ L^\sigma(Q_{1/2},\mathbb{R}^N),\end{equation}
for some $\sigma>q$. To prove the claim \eqref{claim}, we choose $\sigma>0$ such that $q<p<\sigma<q(1+\frac{2}{n})$ and set
$U_k=\tilde w_k-\tilde w$. By the Gagliardo-Nirenberg inequality (cf. \cite[Lemma 2.5]{DH}), we deduce
\begin{equation*}\begin{split}\dashint_{Q_{1/2}}|U_k|^\sigma\,\mathrm {d}z\leq &
c\ \ \dashint_{-\frac{1}{2}}^0\Big(\ \dashint_{B_{1/2}}|U_k|^q+|DU_k|^q\,\mathrm {d}x\Big)^\gamma
\cdot\Big(\ \dashint_{B_{1/2}}|U_k|^2\,\mathrm {d}x\Big)^{\frac{1}{2}(\sigma-q\gamma)}\,\mathrm {d}t,
\end{split}\end{equation*}
where $\gamma=\frac{\sigma-2}{q-2+\frac{2q}{n}}$. From \eqref{wk}, \eqref{w}, \eqref{contradiction wk}
and the H\"older's inequality, we follow the arguments in \cite[page 217]{DH} to obtain
\begin{equation*}\begin{split}\dashint_{Q_{1/2}}|U_k|^\sigma\,\mathrm {d}z&\leq
c\Big(\ \ \dashint_{Q_{1/2}}|U_k|^q+|DU_k|^q\,\mathrm {d}z\Big)^\gamma
\cdot\Big(\ \
\dashint_{-\frac{1}{2}}^0\Big(\ \dashint_{B_{1/2}}|U_k|^2\,\mathrm {d}x\Big)^{\frac{\sigma-q\gamma}
{2(1-\gamma)}}\,\mathrm {d}t\Big)^{1-\gamma}
\\&\leq c\Big(\sup_{-\frac{1}{2}<t<0}\ \dashint_{B_{1/2}}\left|w_k(\cdot,t)\right|^2\,\mathrm {d}x\Big)^{\frac{\sigma-q\gamma}{2}-1+\gamma}
\|U_k\|_{L^2\left(Q_{1/2},\mathbb{R}^N\right)}^{2(1-\gamma)}
\\&\leq c
\Big(\ \ \dashint_{Q_{1/2}}|\tilde w_k-\tilde w|^2\,\mathrm {d}z\Big)^{1-\gamma}.
\end{split}\end{equation*}
Recalling that
$\tilde w_k\to \tilde w$ strongly in $L^2(Q_1,\mathbb{R}^N)$, this yields $U_k\to 0$
strongly in $L^\sigma(Q_{1/2},\mathbb{R}^N)$
and therefore \eqref{claim} holds true.

Step 4: To derive the contradiction, we now construct a sequence of $A_k$-caloric maps $v_k$ as indicated in Step 1.
To this aim we denote by
$v_k\in C^0(-\frac{9}{16},0;L^2(B_\frac{3}{4},\mathbb{R}^N))\cap L^2(-\frac{9}{16},0;W^{1,2}(B_\frac{3}{4},\mathbb{R}^N))$ the
unique weak solution
of the initial-boundary value problem:
 \begin{equation*}
	\begin{cases}
	\ \int_{Q_{3/4}}\left[v_k\cdot\partial_t \varphi-\big\langle A_kDv_k, D\varphi\big\rangle\right]
\mathrm {d}z=0,\quad\text{for\ any}\ \varphi\in C_0^\infty(Q_{3/4},\mathbb{R}^N)
\\
	\qquad \qquad \qquad \qquad \qquad \qquad v_k=w, \quad\text{on}\quad \partial_PQ_{3/4}.
	\end{cases}
\end{equation*}
Clearly, $v_k\in C^\infty(Q_{3/4},\mathbb{R}^N)$ and
we infer from the arguments in \cite[page 29-30]{DMS} that the following convergence holds
\begin{equation}\label{sc}
	\begin{cases}
	\ v_k\to w\qquad &\text{strongly\ in}\ L^2(Q_{3/4},\mathbb{R}^N),   \\
	\ Dv_k\to Dw\qquad &\text{strongly\ in}\ L^2(Q_{3/4},\mathbb{R}^{N\times n}).
	\end{cases}
\end{equation}
Using this together with \eqref{convergence} we obtain $\|w_k-v_k\|_{L^2(Q_{1/2})}\to 0$.
Recalling that $w$ is an $A$-caloric map, we conclude
from \cite[page 39]{B}, \eqref{w} and \eqref{sc} that
\begin{equation}\begin{split}\label{supremum}\sup_{Q_{1/2}}\left\{\ |v_k|,\ |Dv_k|,\ |w|,\ |Dw|\ \right\}^2
&\leq c\ \ \dashint_{Q_{3/4}}|w|^2+|Dw|^2+ |v_k|^2+|Dv_k|^2\,\mathrm {d}z
\\&\leq c\Big(\|w\|_{L^2(Q_1,\mathbb{R}^N)}^2+\|Dw\|_{L^2(Q_1,\mathbb{R}^{n\times N})}^2\Big)
\leq c(\lambda,\Lambda),
\end{split}
\end{equation}
for $k$ sufficiently large. For $p<\sigma<q(1+\frac{2}{n})$,
the estimate \eqref{supremum} yields that
\begin{equation}\label{sc1}
	\begin{cases}
	\ v_k\to w\qquad &\text{strongly\ in}\ L^\sigma(Q_{1/2},\mathbb{R}^N),   \\
	\ \gamma_k^{\frac{p_k-2}{p_k}}v_k\to \tilde w\qquad &\text{strongly\ in}\ L^\sigma(Q_{1/2},\mathbb{R}^N).
	\end{cases}
\end{equation}
Moreover, we conclude from \eqref{claim} and \eqref{sc1}$_2$ that
\begin{equation*}\begin{split}\dashint_{Q_{1/2}}\gamma_k^{p_k-2}|w_k-v_k|^{p_k}\,\mathrm {d}z
\leq c\Big(\ \dashint_{Q_{1/2}}|\tilde w_k-\tilde w|^\sigma\,\mathrm {d}z\Big)^{\frac{2}{\sigma}}
+c\Big(\ \dashint_{Q_{1/2}}|\tilde w-\gamma_k^{\frac{p_k-2}{p_k}}v_k|^\sigma\,\mathrm {d}z\Big)^{\frac{2}{\sigma}}\to0.
\end{split}\end{equation*}
This together with \eqref{sc1}$_1$ yields
\begin{equation}\begin{split}\label{con}\lim_{k\to\infty}\ \ \dashint_{Q_{1/2}}4|w_k-v_k|^2+2^{p_k}
\gamma_k^{p_k-2}|w_k-v_k|^{p_k}\,\mathrm {d}z=0,
\end{split}\end{equation}
since
$w_k\to w$ strongly in $L^2(Q_1,\mathbb{R}^N)$.
Finally, recalling that $\gamma_k^{p_k-2}\leq1$, we use \eqref{supremum} to deduce that
\begin{equation*}\begin{split}\varlimsup_{k\to\infty}\ \ &\dashint_{Q_{1/2}}4|v_k|^2+|Dv_k|^2+\gamma_k^{p_k-2}
(2^{p_k}|v_k|^{p_k}+|Dv_k|^{p_k})
\,\mathrm {d}z\leq \hat c(n,N,\gamma_2,\lambda,\Lambda).
\end{split}\end{equation*}
Therefore, the limit \eqref{con} is contrary to \eqref{B1result}
with $c_*\geq \hat c(n,N,\gamma_2,\lambda,\Lambda)$. This leads to a contradiction for $k$ sufficiently large.
We have thus proved the lemma.
\end{proof}
\bibliographystyle{abbrv}

\end{document}